\newtheorem{theorem}{Theorem}[section]
\newtheorem{lemma}[theorem]{Lemma}
\newtheorem{corollary}[theorem]{Corollary}
\newtheorem{proposition}[theorem]{Proposition}
\theoremstyle{remark}
\theoremstyle{definition}
\numberwithin{equation}{section}
\newcommand{\I}{1\!{\mathrm l}}
\newcommand{\norm}[1]{\Vert#1\Vert}
\DeclareMathOperator{\Bdb}{{\mathbb B}}
\DeclareMathOperator{\bB}{{\mathbb B}}
\DeclareMathOperator{\bK}{{\mathbb K}}
\DeclareMathOperator{\bR}{{\mathbb R}}
 \DeclareMathOperator{\Kdb}{{\mathbb K}}
\DeclareMathOperator{\Cdb}{{\mathbb C}}
\DeclareMathOperator{\Rdb}{{\mathbb R}}
\DeclareMathOperator{\Hdb}{{\mathbb H}}
\DeclareMathOperator{\Fdb}{{\mathbb F}} 
\DeclareMathOperator{\Ndb}{{\mathbb N}}
\DeclareMathOperator{\A}{{\mathcal A}}
\DeclareMathOperator{\cM}{{\mathcal M}}
\DeclareMathOperator{\cL}{{\mathcal L}}
\DeclareMathOperator{\cA}{{\mathcal A}}
\DeclareMathOperator{\cT}{{\mathcal T}}
\DeclareMathOperator{\cF}{{\mathcal F}}
\DeclareMathOperator{\cS}{{\mathcal S}}
\DeclareMathOperator{\cU}{{\mathcal U}}
\begin{document}


\title[Real operator spaces and algebras]{Real operator spaces and operator algebras}
\author[D. P. Blecher]{David P. Blecher}
\address{Department of Mathematics\\ University of Houston\\ Houston, TX
77204-3008, USA}
\email{dpbleche@central.uh.edu}

\date{}

\begin{abstract}  We verify that a large portion of the theory of complex 
operator spaces and operator algebras (as represented by the 2004 book  {\rm \cite{BLM}} for specificity)
transfers to the real case.  We point out some of the results that do not work in the real case. 
We also discuss how the theory and standard constructions interact with the 
complexification, which  is often as important but sometimes much less obvious.
 For example, we develop the real case of the theory of operator space multipliers and the operator space  centralizer 
 algebra,  and discuss how these topics connect with the complexification. 
 This turns out to differ in some important details from the complex case. 
   We also characterize real structure in complex operator spaces; and give `real' characterizations of some of the most important 
   objects in the subject. 
  \end{abstract}
 
 \subjclass[2020]{Primary  46L07,  47L05, 	47L25, 47L30; Secondary: 46H25, 46L08, 46M10, 47L75}
\keywords{Operator space, operator algebra, real operator space, real structure, complex structure, 
injective envelope, complexification} 
 
\maketitle

\section{Introduction}  
Complex operator spaces are an important 
umbrella category containing $C^*$-algebras, operator systems, operator algebras, von Neumann algebras,
and many other objects of interest in modern analysis and also in modern quantum physics (such as quantum information theory).   They have an extensive theory
(see e.g.\ the texts \cite{ER,Pisbk,Pau,BLM}), and have very important applications in all of these subjects. 

Ruan initiated the study of real operator spaces in \cite{ROnr,RComp}, and this study was continued in 
\cite{Sharma, BT}.   
A real operator space may either be viewed as a real subspace of $B(H)$ for a real Hilbert space $H$, or abstractly as 
a vector space with a norm $\| \cdot \|_n$ on $M_n(X)$ for each $n \in \Ndb$ satisfying  the conditions of
Ruan's characterization in  \cite{ROnr}.    Real structure occurs naturally and crucially
 in very many areas of mathematics, as is mentioned also for example in the first
paragraphs of \cite{BT} and \cite{Sharma}, or in \cite{Ros}; and also shows up in places in modern mathematical 
quantum physics (see e.g.\ \cite{Ch2} and references
therein).  Unfortunately there is not very much at all on real operator spaces in the literature;  the works cited at the start of the paragraph  do not add
up to a lot of pages.  
Even  for Hilbert spaces the very rough comparisons with the amount of  literature in the complex case  
 mentioned in the introductions of \cite{BPi,BT} are startling.  A researcher working on a problem which involves real operator spaces or systems,  would have had to 
reconstruct a large amount of the theory from scratch in the real case. 
Thus for contemporary applications in the areas of mathematics and physics mentioned above, it is of interest 
to understand the real case of the important results in (complex) operator space theory--what works and how it is
connected to the complex case.   

 This is 
one goal of the present paper, to supply in some fashion such a resource.   Since this is a daunting and  not well defined task we restrict 
ourselves in the last part of our paper mostly to the more modest target 
of checking  the real case of the most important chapters in the text \cite{BLM}, and how the 
facts and structures there relate to the complexification, which is sometimes quite nontrivial. 
   To not try the readers patience we have attempted to be brief. 
Our proofs are often deceptively short, often referencing deep results and arguments.   Some complementary results
can be found in the companion paper \cite{BCK}.

We will begin our paper with several fundamental applications.  
 Section \ref{absc} establishes   the real case of  some of the most important 
characterizations of objects of particular interest in operator spaces: operator algebras, operator modules, unital 
operator spaces, and operator systems. 
In Section \ref{csiros}  we characterize when a real operator space may be given a 
complex structure.  For example, as we observe, the quaternions are a real operator space  and a complex Banach space, 
but are not a complex operator space.  We explain why this happens and what is needed to remedy it.   
We show
also in this section that in contrast to the Banach space case (see e.g.\ \cite{FG}),
every complex operator space $X$  has a unique complex operator space  structure up to complete isometry, and moreover such structure has a very simple classification. 

Section \ref{ospm}  is devoted to 
extending 
to  real operator spaces the deeper aspects of the theory of operator space multipliers and  operator space  centralizers (see \cite[Section 7]{BZ} for 
the latter in the complex case).    
This is one of the more profound parts of the `completely isometric theory' of operator spaces.  
  Some of this is needed in \cite{BK} which uses such multipliers and  operator space  centralizers in the real case.
Indeed  it will be applicable,  
and probably critical,  in situations in the future involving the real case
of operator modules in the sense of Christensen and Sinclair, or more generally involving 
 subspaces of $B(H)$ that are invariant under left multiplication by various operators on $H$.
The real theory differs in a few  
important details from the complex case.   Some things are considerably more difficult in the real case.  For example a hard question in the real case is the characterization of $M$-ideals in real TRO's; see a paper referred to in the Acknowledgements below. 
 It does not apply to some operator spaces (some spaces have no interesting 
$M$-ideals or  multipliers), but is a very powerful tool in spaces that do possess some `operator algebraic structure' in a loose sense. For example Section \ref{csiros} shows how multipliers are 
a main ingredient in the complex structure of real operator spaces.  The centralizer is a generalization of the center of a $C^*$-algebra, and is also often key to understanding
the ideal structure (or $M$-ideal structure in general settings).  Thus one may expect the centralizer to 
 play a role in future generalizations of ideal structure and centers, and possibly in directions 
such as Bryder's result relating  the intersection property to the action on the centre of the injective envelope \cite{Bry}.

Although there are other things there, Sections \ref{os}--\ref{last} 
mainly verify in a very economical format
the real case of the remaining theory in Chapters 1--4 and 8 of \cite{BLM}, and establish how the basic constructions there 
interact  with the complexification. We also check some selected results in Chapter 5 there.  
The Appendix of \cite{BLM}  consists of standard facts in 
functional analysis, the real case of almost all of which are well known.  The few that are unclear in the real case we have discussed in scattered locations below.  
Since the chapters in \cite{BLM} build on each other, we will systematically start from the beginning
(avoiding of course results already in the literature). 
As one would expect, many results in the real theory are proved just as in the complex case, 
We say almost nothing about such results; although we may sometimes mention a potentially confusing step in the proof.
Similarly, many 
  of the results follow swiftly by complexification, but sometimes we give additional details on how this should be done. 
Then some results do require new proofs, usually because the complex argument 
involves facts that fail in the real case.  Indeed in addition to the `real issues' listed in the Introductions to \cite{BT,BCK}, 
 arguments that involve selfadjoint or positive elements and their span, or the polarization identity, often fail in the real case. 
 We also point out the results that cannot be made to work in the real case.

Thus for example Section \ref{last} may be viewed in some sense as a 
(very economical) complete theory of real $C^*$-modules, and in particular the real operator space aspects of that subject. 
 Sections \ref{os}--\ref{om} verify in the real case aspects of the general theory of operator spaces (and in particular their duality
 and tensor products), 
 operator algebras, and operator modules.   These sections also establish 
 functoriality of the complexification (see the next paragraph) for many important constructions (such as their tensor products), and develop some other  new aspects of the complexification.
 Section \ref{inj4} mostly concerns  the real case of 
 the few remaining 
 topics from Chapter 4 of \cite{BLM}.

As we have alluded to earlier, not only do we want to check that 
the real versions of the complex theory  work, 
 we also want to know what the complexifications are 
of standard constructions, and this is often as important but much less obvious.   
For example, it is important to know that the complexification of a particular operator space tensor product is 
a particular  tensor product of the complexifications.
More generally, it is important to know for which `constructions' $F$ in the theory we have $F(X)_c = F(X_c)$ canonically completely isometrically.  In some cases 
one has to be careful with the identifications.  For example, just because one has proved that a complex space $W$ is complex linearly completely isometrically isomorphic to $X_c$, 
 and that $W$ is a reasonable complexification of a real space $Y$, one cannot conclude 
 that then $Y \cong X$. In fact $Y$ may not be isometric to $X$. 
 This fails even if $X, Y, W$ are finite dimensional $C^*$-algebras (even if $X = M_2(\Rdb), W = M_2(\Cdb)$).  
  Or as another example,  if we have  a complex space (such as a von Neumann algebra) with a unique operator space predual, and that predual is a reasonable
 complexification of a real space, then that real space need not be unique up to complete isometry (or even isometry).

We now turn to notation.  The reader will need to be familiar with the  basics of complex operator spaces and von Neumann algebras  
as may be found in early chapters of \cite{BLM,ER,Pau, Pisbk}, and e.g.\  \cite{P}. 
It would be helpful to also browse the (small) existing real operator space theory  \cite{ROnr,RComp,Sharma,BT}.  Some basic 
real $C^*$-algebra theory may be found in \cite{Li} or \cite{ARU,Good}.
 The letters $H, K$ are reserved for real Hilbert spaces.  Every complex Hilbert space is a real Hilbert space with the `real part' of the inner product. 
 We sometimes write the complex number $i$ as $\iota$ to avoid confusion with matrix subscripting. 
 For us a {\em projection}  in an algebra 
is always an orthogonal projection (so $p = p^2 = p^*$).    A  normed algebra $A$  is {\em unital} if it has an identity $1$ of norm $1$, 
and a map $T$ 
is unital if $T(1) = 1$.  We say that $A$ is {\em approximately unital} if it has a contractive approximate identity (cai).
 We write $X_{\rm sa}$ for the selfadjoint operators 
 in $X$.   
 
 An operator space $X$ comes with a norm $\| \cdot \|_n$ on  $M_n(X)$.
 Sometimes the sequence of norms $(\| \cdot \|_n)$ 
 is called the {\em operator space structure}. 
If $T : X \to Y$ we write $T^{(n)}$ for the canonical `entrywise' amplification taking $M_n(X)$ to $M_n(Y)$.   
The completely bounded norm is $\| T \|_{\rm cb} = \sup_n \, \| T^{(n)} \|$, and $T$ is 
completely  contractive if  $\| T \|_{\rm cb}  \leq 1$. 
A map $T$ is said to be {\em positive} if it takes  positive elements to positive elements, and  {\em 
completely positive} if $T^{(n)}$ is  positive for all $n \in \Ndb$. A UCP map is  unital and completely positive.

 An {\em operator space complexification} of a real operator space $X$ 
is a pair $(X_c, \kappa)$ consisting of a complex operator space $X_c$ and a real linear complete isometry $\kappa : X \to X_c$ 
such that $X_c = \kappa(X) \oplus i \, \kappa(X)$ as a vector space.   For simplicity we usually identify $X$ and $\kappa(X)$ and write $X_c = X + i \, X$.
We say that the complexification is {\em reasonable} if the map $\theta_X(x+iy) = x - iy$ on $X_c$ (that is
$\kappa(x) + i \kappa(y) \mapsto  \kappa(x) - i \kappa(y)$ for $x, y \in X$), is 
a complete isometry.  Ruan proved that a  real operator space has a unique reasonable complexification
$X_c = X + i X$ up to complete isometry \cite{RComp}.       Sometimes we will call this a
`completely reasonable complexification', or a `reasonable operator space complexification'.  
Conversely, if $X$ is a real operator space with a complete isometry $\kappa : X \to Y$ into a complex operator space,   
 then $(Y,\kappa)$ is a reasonable operator space complexification  of $X$ if and only if $Y$ possesses a 
conjugate linear completely isometric period 2 automorphism whose fixed points are $\kappa(X)$ (\cite[Theorem 3.2]{RComp}). 
We will use the latter result repeatedly, as well as the notation $\theta_X$. 

We recall that the complexification may be identified up to real  complete isometry with the operator subspace $V_X$ of $M_2(X)$ 
consisting of matrices of the form 
\begin{equation} \label{ofr} \begin{bmatrix}
       x    & -y \\
       y   & x
    \end{bmatrix}
    \end{equation} 
    for $x, y \in X$. 
    
     We will need the fact that 
  $M_{n,m}(X)_c = M_{n,m}(X_c)$ completely isometrically for an operator space $X$.
  This  may be seen in several ways, for example by using the identification of $X_c$ with $V_X$ above.
  Or it may be proved by noting that it is sufficient to assume $m = n$ and $X = B(H)$.  And for a $C^*$-algebra $B$, we have  
  $M_n(B)_c \cong   M_n(B_c)$ $*$-isomorphically so isometrically.  
  If  $T : X \to Y$ is 	a real completely bounded map then $T_c(x+iy) = T(x) + i T(y)$ for $x, y \in X$.
Ruan shows in \cite[Theorem 2.1]{RComp} that $\| T_c \|_{cb} = \| T \|_{cb}$.  We include a 
short proof of this.  
 
  \begin{proposition} \label{cbcmpx}
	 If $T : X \to Y$ is 
	a real completely bounded (resp.\ completely  isometric, complete quotient) linear operator between real operator spaces, then $T_c$ is completely bounded (resp.\
	 a complete isometry, complete quotient), with $\norm{T_c}_{cb}=\norm{T}_{cb}$.
\end{proposition}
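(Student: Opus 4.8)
The plan is to reduce everything to the $2\times 2$-matrix model of the complexification recalled above. Combining $M_n(X_c)=M_n(X)_c$ with the identification (up to real complete isometry, transporting the complex structure) of $M_n(X)_c$ with the subspace $V_{M_n(X)}$ of $M_2(M_n(X))=M_{2n}(X)$ consisting of matrices of the form \eqref{ofr} but with entries $x,y\in M_n(X)$, the first point to record is that $(T_c)^{(n)}$ is precisely the restriction of $T^{(2n)}$ to $V_{M_n(X)}$. Indeed $T^{(2n)}$ acts entrywise, so it carries such a matrix of the form \eqref{ofr} to the corresponding one with $x,y$ replaced by $T^{(n)}x,T^{(n)}y$, landing in $V_{M_n(Y)}$, and under the identifications this map is exactly $(T_c)^{(n)}$.

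From this one then observes that the completely bounded assertion is immediate in both directions. Since $(T_c)^{(n)}$ is a restriction of $T^{(2n)}$ we get $\|(T_c)^{(n)}\|\le\|T^{(2n)}\|\le\|T\|_{\mathrm{cb}}$, hence $\|T_c\|_{\mathrm{cb}}\le\|T\|_{\mathrm{cb}}$; and since conversely $T^{(n)}$ is the restriction of $(T_c)^{(n)}$ to the real part $M_n(X)\subseteq M_n(X_c)$ (which is real completely isometrically embedded), $\|T\|_{\mathrm{cb}}\le\|T_c\|_{\mathrm{cb}}$, giving equality. The completely isometric case is equally quick: if each $T^{(n)}$, hence each $T^{(2n)}$, is isometric, then so is its restriction $(T_c)^{(n)}$, so $T_c$ is completely isometric.

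The complete quotient case needs one additional idea, and I expect it to be the main obstacle: the restriction of a metric surjection to a subspace need not be a metric surjection onto the image of that subspace. I would resolve this by symmetrizing. Let $J\in M_{2n}(\Rdb)$ be the scalar matrix in block form $\bigl(\begin{smallmatrix}0&-I_n\\I_n&0\end{smallmatrix}\bigr)$; it is a real unitary with $J^2=-I_{2n}$, so conjugation by $J$ is a complete isometry of $M_{2n}(X)$, and $P_X\colon w\mapsto\tfrac12\bigl(w+JwJ^{-1}\bigr)$ is a contractive idempotent of $M_{2n}(X)$ onto $V_{M_n(X)}$ (and similarly $P_Y$ on $M_{2n}(Y)$). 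Because $J$ has scalar entries, conjugation by $J$ commutes with the entrywise map $T^{(2n)}$, so $T^{(2n)}P_X=P_YT^{(2n)}$. Now assume $T$ is a complete quotient map, so $T^{(2n)}$ maps the open unit ball of $M_{2n}(X)$ onto that of $M_{2n}(Y)$. Given $v$ in the open unit ball of $V_{M_n(Y)}$, lift it to some $w$ in the open unit ball of $M_{2n}(X)$ with $T^{(2n)}w=v$; then $P_Xw$ lies in the open unit ball of $V_{M_n(X)}$ and $T^{(2n)}(P_Xw)=P_Yv=v$. Thus $(T_c)^{(n)}$ is a metric surjection for every $n$, i.e.\ $T_c$ is a complete quotient map. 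The only routine verifications are that $P_X$ is contractive (being an average of two complete isometries) and is idempotent with range $V_{M_n(X)}$, which follow from $J^2=-I_{2n}$ by direct computation.
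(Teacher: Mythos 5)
Your argument is correct, and for the completely bounded and completely isometric assertions it is essentially the paper's own proof: both reduce to the $2\times 2$ block model $V_{M_n(X)}\subset M_2(M_n(X))\cong M_{2n}(X)$ of $M_n(X_c)=M_n(X)_c$, observe that $(T_c)^{(n)}$ is the restriction of $T^{(2n)}$ to this subspace, and read off $\norm{T_c}_{cb}=\norm{T}_{cb}$ and the preservation of complete isometries. Where you genuinely diverge is the complete quotient case: the paper does not prove this part at all, but simply refers the reader to \cite[Proposition 5.5]{BK}. Your averaging argument fills that gap in a self-contained way, and it checks out: with $J=\bigl(\begin{smallmatrix}0&-I_n\\ I_n&0\end{smallmatrix}\bigr)$ one computes that $\tfrac12(w+JwJ^{-1})$ sends $\bigl(\begin{smallmatrix}a&b\\ c&d\end{smallmatrix}\bigr)$ to $\bigl(\begin{smallmatrix}(a+d)/2&(b-c)/2\\ (c-b)/2&(a+d)/2\end{smallmatrix}\bigr)$, so $P_X$ is indeed an idempotent onto $V_{M_n(X)}$, contractive as an average of the identity and conjugation by a scalar orthogonal matrix (a complete isometry by Ruan's axioms), and it intertwines $T^{(2n)}$ because $J$ is scalar. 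Pushing an open-ball lift $w$ of $v\in V_{M_n(Y)}$ through $P_X$ then produces a lift inside $V_{M_n(X)}$ of no larger norm, which is exactly what is needed. The only thing you might add for completeness is a sentence confirming that the notion of complete quotient you use (open unit ball of $M_n(X)$ mapped onto that of $M_n(Y)$ for every $n$) agrees with the definition in force, but that is the standard equivalence. In short: same route as the paper for two of the three claims, and a worthwhile elementary replacement for the external citation on the third.
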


\begin{proof}  Clearly $\norm{T}_{cb}\leq\norm{T_c}_{cb}$ since $\norm{T}_{cb}=\norm{T_c\arrowvert_X}_{cb}$. For the other inequality,  by the discussion around
(\ref{ofr}) we can identify $\| T_c^{(n)}([x_{ij}+\iota \, y_{ij}]) \|$ with the norm of the matrix 
$$\begin{bmatrix}
			T(x_{ij}) & -T(y_{ij})\\
			T(y_{ij}) & T(x_{ij})\\
		\end{bmatrix} , $$
 for $[x_{ij}+\iota \, y_{ij}]\in M_n(X_c)$.  This quantity is 
 dominated by
  $\| T \|_{cb}$ times the norm of the matrix  in (\ref{ofr}), which is 
 $\| [x_{ij}+\iota \, y_{ij}] \|$.
 Hence $T_c$ is completely bounded and
  $\norm{T_c}_{cb}\leq\norm{T}_{cb}$. So $\norm{T_c}_{cb}=\norm{T}_{cb}$. If $T$ is a complete isometry, then the matrix in the displayed equation 
   has the same norm as  the matrix  in (\ref{ofr}), so  that $T_c$ is a complete isometry. 
  The `complete quotient' assertion is generalized in \cite[Proposition 5.5]{BK}).
  \end{proof}

We showed in \cite[Section 2]{BCK} that $CB(X_c,Y_c)$ is a  reasonable complexification of $CB(X,Y)$.  
In places the reader will also need to be familiar with the theory of the injective envelope $I(X)$.  In the complex case this may be found 
in e.g.\ \cite{BLM,ER,Pau}.   The real case was initiated in \cite{Sharma}, and continued in \cite{BCK} (see also Section \ref{inj4}
although this is not used much in sections before that). 

A real  {\em unital operator space} is a  operator space  $X$ with a distinguished element $u \in X$ and a 
 real complete isometry $T : X \to B(H)$ with $T(u) = I_H$.  
 A real  {\em operator system} is a  unital operator space  $X$ with an involution  and a 
 real complete isometry $T : X \to B(H)$ with $T(u) = I_H$ which is selfadjoint (that is $T(x^*) = T(x)^*$). 
The diagonal $\Delta(X) = X \cap X^*$ of a unital real operator space $X$ is a well-defined real operator system  independent 
of representation  as in the complex case.  This follows e.g.\ by  
the discussion after Corollary 2.5 in \cite{BT}: suppose that
 $T : X \to Y$ is a surjective unital complete isometry between real unital operator spaces with
$I_H \in X \subset B(H)$ and $I_K \in Y \subset B(K)$.  Then the canonical extension $\tilde{T}: X + X^*  \to B(K) : x + y^*  \mapsto T (x) + T (y)^*$ is well defined
for $x,y \in X$, is 
selfadjoint and is a completely isometric complete order embedding onto $Y + Y^*$.  
(Whether this is isometric when $T$ is a unital  isometry was posed  in \cite{BT} when $X$ and $Y$ are in addition operator algebras.
In fact this is false even in the complex case.  Simple counterexamples may be manufactured 
using the  $\cU(X)$ construction from Section 2.2  of \cite{BLM}.) We see that $T(X \cap X^*) = Y \cap Y^*$, and that $T$ is
selfadjoint  on $X \cap X^*$.  
Indeed the results in 1.3.4--1.3.7  in 
\cite{BLM} 
hold in the real case, as does Choi's result Proposition 1.3.11 and 1.3.12 and 
the Choi-Effros Theorem 1.3.13 there (by going to the complexification if necessary).  Most of this was established in \cite{Sharma,BT} and 
  \cite[Section 4]{ROnr}.    Example 5.10 in \cite{BT}  shows that 1.3.8  in 
\cite{BLM} 
fails in the real case, although it is true for unital complete contractions (by e.g.\ 2.4 and  2.5 in \cite{BT}).  For the Paulsen system we have
$\cS(X)_c \cong \cS(X_c)$; indeed this is inherited from the relation $M_2(B(H))_c \cong M_2(B(H)_c)$.   

Perhaps shockingly, the complexification $A_c$ of an (even unital) operator algebra $A$ need not be well defined up to isometric (as opposed to complete isometric) isomorphism.  For that reason operator algebras and their complexification are almost always treated here in the operator space setting.
The diagonal $\Delta(A) = A \cap A^*$ of a real operator algebra $A$ in $B(H)$
(with possibly no kind of identity) is a well-defined operator algebra independent 
of representation  as in the complex case.  Indeed by \cite[Theorem 2.6]{BT}, 
 a contractive homomorphism $\pi : A \to B$ between operator algebras takes the diagonal $\Delta(A)$ 
$*$-isomorphically onto a closed $C^*$-subalgebra of $\Delta(B)$.   Thus 
 the results in  2.1.2 in \cite{BLM} hold in the real case. For an operator algebra or unital operator space we have that 
$\Delta(X_c) = X_c \cap X_c^* = \Delta(X)_c$ in $B(H)_c$.
 Because the selfadjoint elements do not necessarily span a real 
$C^*$-algebra 
 (or may be all of the $C^*$-algebra) one needs to be careful in places.  
Thus $\Delta(A)$ need not be the span of the selfadjoint elements 
 (nor of the projections if $A$ is also weak* closed and not commutative), unlike in the complex case.

\section{Abstract characterizations}  \label{absc} Complex operator space theory began with Ruan's abstract characterization of complex operator spaces \cite[Theorem 2.3.5]{ER}.
Ruan also gave the matching characterization of real  operator spaces in \cite{ROnr}. 
Similarly there is a well known  abstract characterization of complex operator  algebras with contractive approximate identity (cai) \cite[Theorem 2.3.2]{BLM},
and its matching real version is in \cite{Sharma}.   
Real `dual operator algebras'  are characterized in Theorem \ref{doa} below.   
 Nonunital operator algebras are characterized up to completely bounded isomorphism in \cite[Section 5.2]{BLM}, and the real case follows immediately  by complexification.
 Nonunital real operator algebras  may be characterized up to complete isometry   as follows.

\begin{theorem}  \label{KP}  {\rm (Real version of the Kaneda–Paulsen theorem)}\  Let $X$ be a real operator space. The real algebra products on $X$ for which there exists a real linear completely isometric homomorphism from $X$ onto a real  operator algebra, are in a bijective correspondence with the elements $z \in  {\rm Ball}(I(X))$ such that 
$X z^* X \subset  X$ in the ternary product of $I(X)$ (recall that the injective envelope $I(X)$ is a real ternary subsystem of $I(X_c)$  {\rm \cite[Section 4]{BCK}}).
For such $z$ the associated operator algebra product on $X$ is $x z^*  y$. \end{theorem}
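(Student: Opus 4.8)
The plan is to reduce everything to the complex Kaneda--Paulsen theorem, using the description of the real injective envelope recalled above. Specifically, I would use that $I(X)$ is a real ternary subsystem of $I(X_c)$ and that, by \cite[Section 4]{BCK}, $I(X_c)$ is a reasonable operator space complexification of $I(X)$; thus the conjugation $\theta := \theta_{I(X_c)}$ is a conjugate-linear completely isometric period $2$ ternary automorphism of $I(X_c)$ whose fixed-point set is $I(X)$ and which restricts on $X_c$ to $\theta_{X_c}$. In particular $X = X_c \cap I(X)$, and the ternary product on $I(X)$ used in the statement is just the restriction of the one on $I(X_c)$.

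First, for the assignment $z \mapsto \diamond_z$, where $x \diamond_z y := x z^* y$: given $z \in \mathrm{Ball}(I(X))$ with $X z^* X \subseteq X$, a direct expansion of $(x_1 + i x_2) z^* (y_1 + i y_2)$ shows $z \in \mathrm{Ball}(I(X_c))$ and $X_c z^* X_c \subseteq X_c$. The complex Kaneda--Paulsen theorem then makes $(X_c, \diamond_z)$ completely isometrically isomorphic to a complex operator algebra. Since $X$ is a real-linear subspace of $X_c$ closed under $\diamond_z$, the pair $(X, \diamond_z)$ is a real operator algebra (any norm-closed real-linear subalgebra of a complex operator algebra $B \subseteq B(K)$ is a real operator algebra, viewing $B(K)$ over $\Rdb$), and $X \hookrightarrow X_c$ is a real linear completely isometric homomorphism onto it; so $\diamond_z$ is a product of the asserted kind.

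Conversely, given a real algebra product $m$ on $X$ with a real linear completely isometric homomorphism onto a real operator algebra $A$, I would extend $m$ to the unique complex-bilinear product $m_c$ on $X_c$; then by Proposition~\ref{cbcmpx} the given isomorphism complexifies to a complex completely isometric algebra isomorphism of $(X_c,m_c)$ onto $A_c$, so $(X_c, m_c)$ is a complex operator algebra. By the complex Kaneda--Paulsen theorem there is a \emph{unique} $w \in \mathrm{Ball}(I(X_c))$ with $X_c w^* X_c \subseteq X_c$ and $m_c(x,y) = x w^* y$. The crucial point is that $w$ must be $\theta$-fixed, hence lie in $I(X)$: applying the ternary automorphism $\theta$ to $m_c(x,y) = x w^* y$ and then replacing $x, y$ by $\theta(x), \theta(y)$ yields $x\, \theta(w)^* y = \theta\big(m_c(\theta(x),\theta(y))\big)$; the right-hand side is complex-bilinear in $(x,y)$ and agrees with $m_c$ on $X \times X$ (since $m(x,y) \in X$ is $\theta$-fixed there), so it equals $m_c$ everywhere. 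As $\theta(w)$ also lies in $\mathrm{Ball}(I(X_c))$ and satisfies $X_c \theta(w)^* X_c \subseteq X_c$, the uniqueness in the complex theorem forces $\theta(w) = w$. Setting $z := w \in I(X)$, one has $\|z\| \le 1$, and for $x, y \in X$ the element $x z^* y$ is $\theta$-fixed and equals $m_c(x,y) = m(x,y) \in X_c$, hence lies in $X_c \cap I(X) = X$; so $X z^* X \subseteq X$ and $\diamond_z = m$.

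Bijectivity then follows: the previous paragraph shows $m \mapsto z \mapsto \diamond_z$ returns $m$, and running the same uniqueness argument with $m = \diamond_{z_0}$ (for which $m_c = \diamond_{z_0}$) shows $z_0 \mapsto \diamond_{z_0} \mapsto z_0$. I expect the main obstacle to be exactly this $\theta$-equivariance/uniqueness step, together with confirming that the facts imported from \cite{BCK} have precisely the form needed (compatibility of the ternary products of $I(X)$ and $I(X_c)$, and of the conjugations $\theta_{X_c}$ and $\theta_{I(X_c)}$); granting those, the argument is a clean descent from the complex Kaneda--Paulsen theorem.
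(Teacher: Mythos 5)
Your argument is correct, and it arrives at the theorem by a route that is close to, but not identical with, the paper's. The paper proves the existence direction (that each admissible $z$ yields an operator algebra product) directly in the real setting, by viewing $I(X)$ as a real ternary system in $B(H)$ and invoking Remark 2 on p.\ 194 of \cite{BRS}; you instead complexify, apply the complex Kaneda--Paulsen theorem to $(X_c,\diamond_z)$, and restrict to the closed real subalgebra $X$ --- both are fine, and yours has the small advantage of not needing the real version of the \cite{BRS} remark. For the converse, both proofs pass to $X_c$ and use the complex theorem to produce $w\in {\rm Ball}(I(X_c))={\rm Ball}(I(X)_c)$, but they differ in how they force $w$ into the real part: the paper writes $w=z_1+iz_2$ with $z_i\in I(X)$, observes that $Xz^*X\subset X$ forces $Xz_2^*X=0$, hence $X_cz_2^*X_c=0$ and $z_2=0$ by the nondegeneracy result \cite[Theorem 4.4.12]{BLM}; you instead show $\theta(w)=w$ by conjugating the identity $m_c(x,y)=xw^*y$ with the period-$2$ ternary automorphism $\theta$ of $I(X_c)$ and appealing to the uniqueness clause of the complex theorem. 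These are equivalent in substance (the uniqueness in the complex Kaneda--Paulsen theorem is itself underwritten by \cite[Theorem 4.4.12]{BLM}), so your version trades an explicit appeal to that TRO fact for a symmetry-plus-uniqueness argument; the one hypothesis you must import, as you correctly flag, is that the conjugation on $I(X_c)=I(X)_c$ is a (conjugate-linear) ternary automorphism extending $\theta_{X_c}$ with fixed points $I(X)$, which is exactly the content of \cite[Section 4]{BCK} recalled in the statement (and also follows from the standard fact that surjective complete isometries of TROs are ternary isomorphisms, applied after passing to the conjugate TRO). Your treatment of bijectivity via the two composites and the uniqueness of $w$ matches the paper's in effect, since the paper likewise reduces bijectivity to \cite[Theorem 4.4.12]{BLM}.
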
 

\begin{proof}  As in the proof in \cite[Theorem 5.2]{BNmetric2} the one direction, and the last statement, follows from Remark 2 on p. 194 of \cite{BRS}, viewing $I(X)$ as a ternary system in
$B(H)$, and taking $V$ there to be $z^*$.  
For the other direction, suppose that  $X$ is a real  operator algebra, and that (by the complex version of the present theorem)
 the canonical operator algebra product on $X_c$ is given by 
$x z^* y$ for some $z  \in  {\rm Ball}(I_{\Cdb}(X_c)) = {\rm Ball}(I_{\Cdb}(X)_c)$ (see \cite[Section 4]{BCK} for the last identification). Write $z = z_1 + i z_2$ with $z_i \in I(X)$. 
Since $X z^* X \subset X$ it follows that  $X z_2^* X = 0$.   Therefore $X_c  z_2^* X_c = 0$ and $z_2 = 0$ by \cite[Theorem 4.4.12]{BLM}.
So $z  \in  {\rm Ball}(I(X))$.  The bijectivity follows similarly from \cite[Theorem 4.4.12]{BLM} as in the proof of \cite[Theorem 5.2]{BNmetric2}.
 \end{proof}

 {\bf Real unital operator spaces:}  Appropriate variants of the  characterizations of  unital operator spaces from \cite{BNmetric,  BNmetric2} hold 
 in the real case.   
 
 \begin{theorem}  \label{chu}   If $X$  is a real  operator space and $u \in X$ with $\Vert u \Vert = 1$.  The following  are equivalent: 
 \begin{itemize} 
\item [(1)] $(X, u)$ is a real unital operator space.
\item [(2)] $\| [ u_n \; \, x ] \| \geq  \sqrt{2}  , \; \; \; \;  \big\|  \left[ \begin{array}{cl}
u_n  \\
x \end{array} \right] \big\| \geq  \sqrt{2} , \qquad n \in \Ndb, x \in M_n(X), \| x \| = 1.$
\item [(3)] $\Big\|  \left[ \begin{array}{cl} u_n & - x \\
 x & u_n \end{array} \right]  \Big\|  \geq \sqrt{1 + \| x \|},$ for all $n \in \Ndb, x \in M_n(X)$. \end{itemize} 
 Here $u_n$ is the diagonal matrix $u \otimes  I_n$ in $M_n(X)$ with $u$ in each diagonal entry.
 \end{theorem}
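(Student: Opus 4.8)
The plan is to derive (2) and (3) from (1) by a direct computation inside $B(H)$, and to prove the two converses by passing to Ruan's reasonable operator space complexification $X_c = X + iX$, invoking the complex Blecher--Neal characterizations from \cite{BNmetric, BNmetric2}, and then descending to the real space. For (1) $\Rightarrow$ (2) and (1) $\Rightarrow$ (3) I would simply replace $u$ by an identity operator: if $T : X \to B(H)$ is a real complete isometry with $T(u) = I_H$, then $\| [u_n \; x] \| = \| [ I \; \; T^{(n)}(x) ] \|$ and $\| [ I \; \; T^{(n)}(x) ] \|^2 = \| I + T^{(n)}(x) T^{(n)}(x)^* \| = 1 + \| T^{(n)}(x) \|^2 = 1 + \| x \|^2$, using that $T^{(n)}(x) T^{(n)}(x)^*$ is positive, that $I$ is central, and that $T$ is a complete isometry; this is $2$ when $\| x \| = 1$. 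The column estimate in (2) and the inequality in (3) follow by the same elementary manipulations with $I_H$ and $x$; these are exactly the easy halves of the complex characterizations of \cite{BNmetric, BNmetric2} and go through unchanged.

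For the substantive direction, (2) $\Rightarrow$ (1) --- and, by the same argument, (3) $\Rightarrow$ (1) --- I would observe that the hypothesis, being imposed at every matrix level over $X$, forces the complex analogue of that hypothesis for $(X_c,u)$ (in which $u$ still has norm $1$). Using the realization (\ref{ofr}) of $M_n(X_c)$ as the operator subspace of $M_{2n}(X)$ of matrices $\left[ \begin{smallmatrix} a & -b \\ b & a \end{smallmatrix} \right]$, together with $M_{n,m}(X_c) = M_{n,m}(X)_c$, a permutation of rows and columns identifies, for $z = x + iy \in M_n(X_c)$, the complex norms $\| [u_n \; z] \|$ and $\left\| \left[ \begin{smallmatrix} u_n \\ z \end{smallmatrix} \right] \right\|$ with the real norms $\| [ u_{2n} \; \; \tilde z ] \|$ and $\left\| \left[ \begin{smallmatrix} u_{2n} \\ \tilde z \end{smallmatrix} \right] \right\|$ over $X$, where $\tilde z = \left[ \begin{smallmatrix} x & -y \\ y & x \end{smallmatrix} \right] \in M_{2n}(X)$ and $\| \tilde z \| = \| z \|$; in particular the normalization $\| z \| = 1$ is preserved. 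Hence (2) for $X$ gives the complex form of (2) for $X_c$, and likewise for (3), so by \cite{BNmetric2} the pair $(X_c, u)$ is a complex unital operator space. To descend, fix a complex complete isometry $\Phi : X_c \to B(\mathcal{H})$ with $\Phi(u) = I_{\mathcal{H}}$ and write $H$ for $\mathcal{H}$ regarded as a real Hilbert space; the inclusion $B(\mathcal{H}) \subseteq B(H)$ is a real complete isometry fixing the identity operator (a complex-linear operator, and each of its amplifications, has the same norm computed over $\mathcal{H}$ or over $H$), so $\Phi|_X : X \to B(H)$ is a real complete isometry with $u \mapsto I_H$, giving (1).

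The hard part will be the bookkeeping in the unfolding step: for each of (2) and (3) and each matrix level one must check that the relevant block permutation is indeed a complete isometry simultaneously matching the displayed complex norm with a displayed real norm and matching $\| z \|$ with $\| \tilde z \|$, and that the complex statement quoted from \cite{BNmetric2} is phrased in the normalization that these identities produce. A more self-contained but heavier alternative would bypass the complexification and run the Blecher--Neal argument directly inside the real injective envelope $I(X)$, which by \cite{Sharma, BCK} is a real ternary subsystem of $I(X_c)$: one would show that (2) (or (3)) forces $u u^* u = u$ together with enough additional structure that $I(X)$, with the product $a u^* b$ and involution $a \mapsto u a^* u$, becomes a unital real $C^*$-algebra with unit $u$ into which $X$ embeds unitally.
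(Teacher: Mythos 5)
Your proposal is correct and follows essentially the same route as the paper: the substantive converse directions are proved exactly as in the text, by unfolding matrices over $X_c$ into $2\times 2$ block form over $X$ via (\ref{ofr}) (with the row/column permutation bookkeeping you describe), invoking the complex Blecher--Neal characterizations to conclude that $(X_c,u)$ is a complex unital operator space, and then restricting to $X$ using that a complex Hilbert space is a real Hilbert space. The only small caveat is that (1) $\Rightarrow$ (3) is not a bare $C^*$-identity computation in $B(H)$ as (1) $\Rightarrow$ (2) is; the paper obtains it by passing to $X_c$ and quoting Lemma 2.2 and Theorem 1.1 of \cite{BNmetric}, a step that is equally available within your framework.
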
 

\begin{proof}   (1) $\Rightarrow$ (2) \ This is  the easy direction, and follows from the real $C^*$-identity (as in \cite{BNmetric}).

(1) $\Rightarrow$ (3) \ Assuming (1), $(X_c, u)$ is a complex unital operator space.  
The left side of (3) is unchanged if we replace $x$ by $-x$ (as may be seen be pre- and post multiplying by the diagonal matrix with entries 
$1, -1$).   
Thus by Lemma 2.2  in \cite{BNmetric}
the left side of (3) (with respect to $X_c$) 
is $\max \{ \| u_n + i^k x \| : k = 0,1, 2, 3 \}$.  This is $\geq \sqrt{1 + \| x \|}$ by  \cite[Theorem 1.1] {BNmetric}.

 (2) $\Rightarrow$ (1) \ Let $v : X_c \to M_2(X)$ be the complete isometry yielding the matrix in (\ref{ofr}).
  If $x \in M_n(X_c) = M_n(X)_c, \| x \| = 1,$ then  $\| v^{(n)}(x) \| = 1$.  Applying $v$ we get 
$$\| [ u_n \; \, x ] \| = \| [ u_{2n} \; \, v^{(n)}(x) ] \| \geq  \sqrt{2} .$$ 
Thus by the characterization in \cite{BNmetric} there exists a linear complete isometry $T :  X_c \to B(H)$ with $T(u) = I_H$. 
Restricting to $X$ we get the result.   (We are also using that complex Hilbert spaces are real Hilbert spaces, as we do in many places in this paper).

 (3) $\Rightarrow$ (1) \ This is somewhat similar.   Let  $x \in M_n(X_c)$ with $x = y + iz$ for $y, z \in M_n(X)$.
 By reversing the argument for (1) $\Rightarrow$ (3) above we have that $$\max \{ \| u_n + i^k x \| : k = 0,1, 2, 3 \} = \Big\|  \left[ \begin{array}{cl} u_n & - x \\
 x & u_n \end{array} \right]  \Big\|.$$ Applying $v$ as in the last paragraph, this equals
 $$\Big\|  \left[ \begin{array}{cl} u_{2n} & - v^{(n)}(x) \\
  v^{(n)}(x) & u_{2n} \end{array} \right]  \Big\| \geq \sqrt{1 + \| v^{(n)}(x) \|} = \sqrt{1 + \| x \|}.$$
 Thus $(X_c, u)$ is a complex unital operator space by  \cite[Theorem 1.1]{BNmetric}, and we finish as in the last paragraph.   \end{proof} 
 
Most of the other characterizations in the papers \cite{BNmetric, BNmetric2}  hold in the real case, for example the characterization of isometries, coisometries and unitaries.  The characterization of these objects in Theorem 2.4 of [8] in the real case follows by a modification of an argument in the last proof. Also,
  Lemma 5.1, Theorem 5.5 and Corollary 5.6 in \cite{BNmetric2}  hold in the real case.  
  We plan to give more details elsewhere (See work referred to in the Acknowledgements below). 
    Thus unital real operator algebras can be characterized as real operator spaces $X$ with a 
  coisometry $u$ and a bilinear map  $m : X \times X \to X$ 
 such that  $m(x,u) = x$ for  $x \in X$, and 
 such that
 $$\left\| \left[ \begin{array}{cl} m(x,a_{ij}) \\ 
b_{ij} \end{array} \right] \right\| \leq \left\| \left[ \begin{array}{cl} a_{ij} \\ 
b_{ij} \end{array} \right] \right\| \, \quad 
[a_{ij}], [b_{ij}] \in M_n(X) , $$ 
for $n \in \Ndb$ and $x \in {\rm Ball}(X)$.  This `improves'  the characterization of unital operator algebras 
mentioned in the first paragraph of this section.  
 
 \medskip
 
{\bf Real operator systems:} 
The metric characterizations of operator systems in e.g.\  \cite[Theorem 3.4]{BNmetric} and \cite[Proposition 4.2]{BNmetric2}
 are valid in the real case by the same proofs.     We state only  the latter characterization 
 in the real case.  We note that this uses the real case of the proof of the former characterization (i.e.\ \cite[Theorem 3.4]{BNmetric}), which 
 characterization does not even involve an involution on the space.

  \begin{theorem}  \label{BNm2}   Let $X$ be a real  operator space possessing a real linear period 2 involution $*$ 
  with $\| x^* \|  =  \| x \|$ for $x = [x_{ij}] \in M_n(X), n \in \Ndb$. Here $x^* =  [x_{ji}^* ]$. 
   Let  $u \in {\rm Ball}(X)$ with $u = u^*$.  Then   there exists a real $*$-linear complete isometry from $X$ onto a real operator system  with $T(u) = 1$,  if and only if 
 $$\Big\|  \left[ \begin{array}{cl} u_n &  x \\
 -x^* & u_n \end{array} \right]  \Big\|  \; = \;  \sqrt{1 + \| x \|^2}, \qquad n \in \Ndb, x \in M_n(X).$$  
 Here $u_n = u \otimes  I_n$ as in the Theorem {\rm \ref{chu}.}
 \end{theorem}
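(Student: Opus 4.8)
\emph{The plan.} The ``only if'' direction is routine and insensitive to the scalar field. If $T$ carries $X$ onto a real operator system $S \subseteq B(H)$ by a $*$-linear complete isometry with $T(u)=1$, then for $x \in M_n(X)$ the matrix $\begin{bmatrix} u_n & x \\ -x^* & u_n \end{bmatrix}$ has the same norm in $M_{2n}(X)$ as $\begin{bmatrix} 1_n & y \\ -y^* & 1_n \end{bmatrix}$ in $M_{2n}(S)$, where $y = T^{(n)}(x)$ and $\|y\| = \|x\|$; and inside the real $C^*$-algebra $M_{2n}(B(H))$ the real $C^*$-identity gives
\[
\Big\| \begin{bmatrix} 1_n & y \\ -y^* & 1_n \end{bmatrix} \Big\|^2 = \Big\| \begin{bmatrix} 1_n + y y^* & 0 \\ 0 & 1_n + y^* y \end{bmatrix} \Big\| = 1 + \|y\|^2 = 1 + \|x\|^2 ,
\]
since $1_n + y y^*$ and $1_n + y^* y$ are positive with norm $1 + \|y\|^2$.

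For the converse I would pass to the reasonable complexification $X_c = X + iX$, realized as the subspace $V_X \subseteq M_2(X)$ of \eqref{ofr}, and equip it with the conjugate-linear period $2$ map $\star$ given by $(x+iy)^\star = x^* - iy^*$. Under $X_c \cong V_X$ this $\star$ is precisely the restriction to $V_X$ of the matrix involution $[x_{ij}] \mapsto [x_{ji}^*]$ of $X$, because
\[
\begin{bmatrix} x & -y \\ y & x \end{bmatrix}^{*} = \begin{bmatrix} x^* & y^* \\ -y^* & x^* \end{bmatrix}
\]
is the $V_X$-form of $x^* - iy^*$. Hence the hypotheses on $*$ force $\star$ to be a conjugate-linear, completely isometric, period $2$ involution on $X_c$ fixing the unit vector $u$, so that $(X_c,\star,u)$ is an admissible input for the complex form of the theorem, namely \cite[Proposition 4.2]{BNmetric2}. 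To invoke that result I must verify the complex metric identity for $(X_c,\star,u)$. For $z = a + ib \in M_n(X_c)$, the $V_X$-identification amplified to level $2n$ is compatible with the obvious $2\times 2$ block decomposition, so it carries $\begin{bmatrix} u_n & z \\ -z^\star & u_n \end{bmatrix} \in M_{2n}(X_c)$ isometrically onto $\begin{bmatrix} u_{2n} & w \\ -w^* & u_{2n} \end{bmatrix} \in M_{4n}(X)$, where $w \in M_{2n}(X)$ is the $V_X$-form of $z$ (so $\|w\| = \|z\|$) and $w^* = [w_{ji}^*]$. Applying the hypothesis of the theorem to $w$, viewed at matrix level $2n$, shows this norm is $\sqrt{1 + \|w\|^2} = \sqrt{1 + \|z\|^2}$, as needed. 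Thus \cite[Proposition 4.2]{BNmetric2} supplies a complex $\star$-linear complete isometry $\Phi$ of $X_c$ onto a complex operator system with $\Phi(u)=1$; restricting $\Phi$ to $X$, and regarding the underlying complex Hilbert space $\mathcal{H}$ as a real one, yields the required real $*$-linear complete isometry $T = \Phi|_X$, whose range is a real $*$-closed unital subspace of $B(\mathcal{H})$, that is, a real operator system, with $T(u)=1$.

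One could instead argue in place, imitating the complex proof of \cite[Proposition 4.2]{BNmetric2}: a weakened, involution-free consequence of the hypothesis verifies the real case of the operator-system criterion \cite[Theorem 3.4]{BNmetric} (whose statement involves no involution and whose proof survives over $\Rdb$), producing a unital complete isometry of $X$ onto a self-adjoint operator space, after which the rigidity of the \emph{equality} in the hypothesis is exploited to show that this embedding intertwines $*$ with the adjoint. In either approach the delicate point is the interplay with the involution: in the in-place argument one must check that neither \cite[Theorem 3.4]{BNmetric} nor \cite[Proposition 4.2]{BNmetric2} secretly invokes polarization or the linear span of the selfadjoint elements, tools unavailable in the real category; in the complexification argument the same care is concentrated in the single step where the metric identity for $X$ is promoted to the one for $X_c$, which the $V_X$-description together with the stability of the condition under doubling the matrix level take care of. I expect this promotion of the metric identity to the complexification to be the main technical obstacle, everything afterwards being a direct appeal to the complex theorem.
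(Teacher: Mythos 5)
Your proof is correct, but your primary route is genuinely different from the one in the paper. The paper proves the hard direction \emph{in place}: it adapts the (complicated) complex proof of \cite[Proposition 4.2]{BNmetric2} to the real setting, which requires rerunning the real version of the proof of \cite[Theorem 3.4]{BNmetric} with $y=-x^*$, replacing the suprema over states on $M_2(B(H))$ by vector states (which still norm selfadjoint elements of $M_2(B(H))$ over $\Rdb$), and substituting Theorem \ref{chu}\,(2) for its complex counterpart --- i.e.\ exactly the second, alternative route you sketch at the end, including the audit for hidden uses of polarization or of the span of selfadjoint elements. Your main argument instead treats the complex theorem as a black box: you define the conjugate-linear involution $(x+iy)^\star=x^*-iy^*$ on $X_c$, observe via the $V_X$ realization of \eqref{ofr} that it is the restriction of the matrix involution of $M_{2n}(X)$ (hence completely isometric and fixing $u$), and then promote the metric identity from $X$ to $X_c$ by the block computation showing that $\bigl[\begin{smallmatrix} u_n & z \\ -z^\star & u_n\end{smallmatrix}\bigr]$ is carried, after a shuffle permutation, onto $\bigl[\begin{smallmatrix} u_{2n} & w \\ -w^* & u_{2n}\end{smallmatrix}\bigr]$ with $\|w\|=\|z\|$; I checked this identification and it is right. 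The trade-off is clear: the paper's approach avoids any complexification bookkeeping but obliges the reader to re-examine the internals of two nontrivial complex proofs over $\Rdb$; yours concentrates all the work in one explicit (and verifiable) matrix identity and then quotes the complex statement verbatim, which is arguably more robust and more in the spirit of the complexification philosophy the paper advocates elsewhere. Your ``only if'' direction via the real $C^*$-identity is the standard one and matches what the paper implicitly relies on.
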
 
 
 \begin{proof}   We adapt the complex  proof  of \cite[Proposition 4.2]{BNmetric2}.   
 This proof is quite complicated, so that even with applying the hints below it may take the reader some time to master this adaption 
 in conjunction with a careful study of each step of  \cite[Proposition 4.2]{BNmetric2}.  
   As stated above, this adaption uses the real version  of  the proof of the characterization \cite[Theorem 3.4]{BNmetric}, but setting $y = y_n = -x^*$ there, where $*$ is the involution on $X$.
 This proof involves suprema over states on $M_2(B(H))$.  We may use vector states here, using the fact that 
 vector states norm  elements with $a = a^*$ even in the real case.  (The latter adjoint  is in $M_2(B(H))$, and is 
 not yet related to the involution in the statement of the 
 present theorem.)   We also need to appeal to  
  Theorem \ref{chu}  (2) above in place of its complex variant in 
 the proof 
 of \cite[Proposition 4.2]{BNmetric2}.     \end{proof}

 There are characterizations of complex operator systems in terms of the 
 positive cones in $M_n(X)$ in the literature, most notably the Choi-Effros characterization \cite{CE} (but see also e.g.\ \cite[Section 7]{KKM} for a recent characterization in terms
 of noncommutative convexity/quasistates).  We will not discuss these here except to say that in the real case 
 an operator system may have no nontrivial positive elements so that the obvious formulations of such a result in the real case fail.
 For this reason no doubt some authors study a subclass of the operator systems which do have large positive cones; however this would
 exclude some of the most interesting real unital $C^*$-algebras.
 
 \medskip

 {\bf Real operator modules:}   The real version of the Christensen-Effros-Sinclair theorem characterizing 
operator modules and bimodules (the complex version may be found e.g.\ in \cite[Theorem 3.3.1]{BLM}) holds easily
by complexification: 

\begin{theorem} \label{5CES}
Let $A$ and $B$ be approximately unital real operator algebras, and let
$X$ be a real  operator space which is a nondegenerate bimodule
over $A$ and $B$, satisfying 
$$\| \alpha x \beta \| \leq \| \alpha  \|  \|  x  \|  \|  \beta \| , \qquad n \in \Ndb, x \in M_n(X), \alpha \in M_n(A), \beta \in M_n(B).$$
  Then there exist real Hilbert spaces $H$ and $K$,  a real linear 
completely isometric  map $\Phi \colon  X \rightarrow 
B(K,H)$, and completely contractive nondegenerate real linear representations 
$\theta$ of $A$ on $H$, and $\pi$ of $B$ on $K$, such that 
$$\theta(a) \Phi(x) = \Phi(ax)
\; \; \; \; \; \text{and} \; \; \; \; \;
\Phi(x) \pi(b) = \Phi(xb),
\qquad a \in A, b \in B, x \in X.$$ Thus $X$ is completely $A$-$B$-isometric to the
concrete operator $A$-$B$-bimodule $\Phi(X)$. Moreover, $\Phi$,
$\theta, \pi$ may chosen to all be completely isometric, and such
that $H = K$.  If $A = B$ then one may choose,  in addition to all the above,
$\pi = \theta$.
\end{theorem}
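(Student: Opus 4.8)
The plan is to deduce this from the complex Christensen--Effros--Sinclair theorem \cite[Theorem 3.3.1]{BLM} by passing to complexifications; no genuinely ``real'' obstruction intervenes, since the hypotheses only concern module inequalities, nondegeneracy, and Hilbert space representations. First I would complexify: a contractive approximate identity for $A$ is also one for $A_c = A + iA$, and $A_c$, $B_c$ are complex operator algebras (the real case of the facts around 2.1.2 in \cite{BLM}), so $A_c$ and $B_c$ are approximately unital complex operator algebras. I would make $X_c = X + iX$ into an $A_c$--$B_c$-bimodule by expanding $(a_1 + ia_2)(w_1 + iw_2)(b_1 + ib_2)$ multilinearly; this is well defined, and $X_c$ is a nondegenerate bimodule, since for a cai $(e_\lambda)$ of $A$ one has $e_\lambda(w_1 + iw_2) = e_\lambda w_1 + i\, e_\lambda w_2 \to w_1 + iw_2$, so $A_c X_c$ is dense in $X_c$, and similarly on the right.

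The one point that needs an argument is the matrix bimodule inequality for $X_c$ over $A_c$ and $B_c$. I would verify it using the explicit realization of the complexification inside $2 \times 2$ matrices: identify $X_c$, $A_c$, $B_c$ completely isometrically with the operator subspaces $V_X \subseteq M_2(X)$, $V_A \subseteq M_2(A)$, $V_B \subseteq M_2(B)$ of matrices of the form (\ref{ofr}). A short computation shows that under these identifications the module actions of $A_c$ and $B_c$ on $X_c$ are simply the restrictions of ordinary matrix multiplication, with $V_A \cdot V_X \cdot V_B \subseteq V_X$; for instance the off-diagonal entry of the product of $\begin{bmatrix} a & -a' \\ a' & a \end{bmatrix}$ and $\begin{bmatrix} x & -x' \\ x' & x \end{bmatrix}$ is $ax' + a'x$, which is exactly the ``imaginary part'' of $(a + ia')(x + ix')$, and likewise on the right. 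Hence, for $\alpha \in M_n(A_c)$, $w \in M_n(X_c)$, $\beta \in M_n(B_c)$, the quantity $\| \alpha w \beta \|$ computed in $X_c$ is the norm of the corresponding product of three matrices in $M_{2n}(A)$, $M_{2n}(X)$, $M_{2n}(B)$, so the required inequality for $X_c$ is precisely the given real bimodule inequality at matrix level $2n$. (One could instead invoke functoriality of the complexification for the Haagerup tensor product together with Proposition \ref{cbcmpx}, but the concrete picture is quicker here.)

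Now the complex Christensen--Effros--Sinclair theorem applies to the complex operator $A_c$--$B_c$-bimodule $X_c$, and produces a complex Hilbert space $H_0$, a complex linear completely isometric map $\Phi_c \colon X_c \to B(H_0)$, and completely isometric nondegenerate complex linear representations $\theta'$ of $A_c$ and $\pi'$ of $B_c$ on $H_0$ with $\theta'(a)\Phi_c(w) = \Phi_c(aw)$ and $\Phi_c(w)\pi'(b) = \Phi_c(wb)$; and when $A = B$, so that $A_c = B_c$, one may take $\pi' = \theta'$.

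Finally I would restrict everything to the real subspaces. Regarding $H_0$ as a real Hilbert space and setting $H = K = H_0$, the restriction $\Phi := \Phi_c|_X \colon X \to B(H_0) \subseteq B(K,H)$ is a real linear complete isometry (any complex linear complete isometry on $X_c$ restricts on $X$ to a real linear complete isometry, since the canonical embedding $X \to X_c$ is such and $M_n(X)_c = M_n(X_c)$), and $\theta := \theta'|_A$, $\pi := \pi'|_B$ are completely isometric nondegenerate real linear representations. For $a \in A$ and $x \in X$ we have $ax \in X$, so $\theta(a)\Phi(x) = \theta'(a)\Phi_c(x) = \Phi_c(ax) = \Phi(ax)$, and likewise $\Phi(x)\pi(b) = \Phi(xb)$; and if $A = B$ then $\pi = \pi'|_A = \theta'|_A = \theta$. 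That $\Phi$ is then a complete $A$--$B$-isometry onto the concrete operator $A$--$B$-bimodule $\Phi(X)$ is immediate. I expect the only mildly delicate step to be the transfer of the matrix bimodule inequality to the complexification, which is why I would route it through the explicit $V_X$ picture above rather than through an abstract argument.
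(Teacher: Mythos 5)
Your proposal is correct and follows essentially the same route as the paper: complexify $A$, $B$, $X$, transfer the matrix bimodule inequality to $A_c$, $X_c$, $B_c$ via the $2\times 2$ matrix realization of the complexification (the paper suppresses exactly this computation, citing the analogous calculation in Sharma's Theorem 3.2), apply the complex Christensen--Effros--Sinclair theorem, and restrict the resulting maps to the real subspaces. Your explicit verification that $V_A\cdot V_X\cdot V_B\subseteq V_X$ with the product matching the complexified action, so that the level-$n$ inequality over the complexifications is the level-$2n$ real inequality, is precisely the detail the paper omits.
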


\begin{proof}  It is easy to see that $X_c$ is an $A_c$-$B_c$-bimodule algebraically.
Claim:   $\| \alpha x  \| \leq \| \alpha  \|  \|  x  \|$ for $x \in M_n(X_c), \alpha \in M_n(A_c)$.
 This follows  by a matrix calculation whose details we suppress since it is similar to the first 5 lines of the proof that (ii) implies (iii) in \cite[Theorem 3.2]{Sharma}.
 Similarly, $\| x \beta \| \leq   \|  x  \|  \|  \beta \|$.  Hence  the displayed inequality as stated holds in the complexified spaces.
 Thus  $X_c$ is a nondegenerate bimodule
over $A_c$ and $B_c$ satisfying the conditions of the complex version of the theorem. 
Our result  follows from that version, viewing all complex spaces  as real spaces, 
and restricting the ensuing $\Phi, \theta, \pi$ to  real linear maps on $X, A, B$ respectively, as in earlier proofs.  
 \end{proof}

\section{Complex structure in operator spaces} \label{csiros}

This section could be read after Section \ref{ospm} in one sense, since the proofs (not the theorem statements) here 
use the perspective of that theory.  However we have placed it here  in view of its importance,
 and because of the sheer length and technicality
of Section \ref{ospm}.    We will summarize in a more or less selfcontained way before the first theorem
the few results we need from \cite{Sharma}  for the proofs, but some readers unfamiliar with the existing
complex theory of multipliers will possibly need to look at  \cite{Sharma}  or Section \ref{ospm}  at times during the proofs. 

By a {\em complex operator space  structure}
(resp.\  {\em complex Banach space  structure}) on a real operator (resp.\  Banach) space $X$ we mean an action of $i$ on $X$ as a real linear 
map 
 $J : X \to X$ such 
that $X$  (with unchanged norms) is a complex operator (resp.\  Banach)  space with scalar product $(s + it) x = s x + t J(x)$ for $x \in X$ and $s,t \in \Rdb$. 
Any complex operator space 
of course has canonical complex operator space  structure corresponding to $\theta(x) = ix$ for $x \in X$, in the notation above. 
It is well known and obvious that a map $J : X \to X$ on a real Banach space corresponds to a complex Banach space  structure on $X$ 
(with $ix = Jx$) if and only if $J^2 = -I$ and $x \mapsto s x + t J(x)$ is an isometry 
whenever  $s = \cos \theta, t = \sin \theta$, that is whenever $s^2 + t^2 = 1$.   Write $u_\theta = I \, cos \theta + J \, \sin \theta$.
Since $u_\theta$ has inverse $u_{-\theta} = \cos \theta - J \, \sin \theta$, it follows that $u_\theta$ is an isometry for all $\theta$ if and only if it is a contraction
for all $\theta$.

A real operator space which is a complex Banach space need not be a complex 
operator space.  Indeed a real $C^*$-algebra which is a complex Banach space need not be a complex 
$C^*$-algebra.  A counterexample is the quaternions, for which it is easy to see 
the $C^*$-algebra statement for example from the fact  that all two dimensional complex 
$C^*$-algebras are commutative.  We argue in a remark below  that the quaternions are not a 
complex 
operator space, although they are obviously a complex Banach space because $\Cdb$ is embedded as a subalgebra. 
In this section we explain why this happens and what is needed to remedy it. 
In particular what more is     needed for  
a real operator space $X$ which is also a complex Banach space 
to be a  complex operator space?   

Sharma defines in \cite[Section 5]{Sharma} (see also  Section \ref{ospm} for more details if desired) 
the real version   $\cM_\ell^{\Rdb}(X)$ of the left operator space multiplier algebra.   We suppress the superscript here 
if the real setting is understood. 
  We have $\cM_\ell(X) \subset CB(X)$ but with a generally different norm.    
      Sharma also defines there the  $C^*$-algebra of `left adjointable multipliers' $\cA_\ell(X)$, which 
      coincides with the diagonal (see Section 1)
      $\Delta(\cM_\ell(X)) = \cM_\ell(X) \cap \cM_\ell(X)^*$ inside  the $C^*$-algebra $p I({\mathcal S}(X)) p$ in the notation 
       in \cite{Sharma} (more details on this are given in 
      Section \ref{ospm} below).
        Similarly one may define the real right multiplier
     algebra and `right adjointable multipliers'  $\cM_r(X)$ and 
     $\cA_r(X)$.   We define the real  operator space centralizer algebra $Z(X)$ (or $Z_{\Rdb}(X)$ when we want to emphasize that this is the real
  case) 
  to be the set $\cA_\ell(X) \cap \cA_r(X)$ in $CB(X)$.   
The  centralizer is a real commutative $C^*$-algebra.
This is because $\cA_\ell(X)$ and $\cA_r(X)$ commute (as may be seen by their canonical representations in the space $I({\mathcal S}(X))$ mentioned
above).  Finally,  $X$ is an operator 
bimodule  over $Z(X)$ in the sense of Theorem 
\ref{5CES} (for basically  the same reason as in the last line; more details on any of this if needed are given in 
      Section \ref{ospm} below).  

\begin{theorem}  \label{chco2}  A  real linear 
map $J : X \to X$ on a real  operator space  corresponds to a complex operator space  structure on $X$ 
(with $\iota \, x = J \, x$) if and only if $$J^2 = -I \; \; \textrm{and} \; \; \| [d_i x_{ij}] \|_n \leq \| [x_{ij}] \|_n \; \; \textrm{and} \; \;  \| [d_j x_{ij}] \|_n \leq \| [x_{ij}] \|_n,$$
 for all $[x_{ij}] \in  M_n(X)$ and $d_1, \cdots, d_n$ maps of form $u_\theta$ as defined in the 2nd paragraph of Section 3 (that 
is, $d_k = u_{\theta_k}$ for real $\theta_k$), and  for all $n \in \Ndb$.  Indeed in the $n > 1$ case of the above displayed inequalities only the 
values $\theta = 0$ and $\frac{\pi}{2}$ are needed in this characterization; that is $d_i =1$, or 
$d_i = J = \iota$).  \end{theorem}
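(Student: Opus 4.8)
For the statement above, the plan is to prove the biconditional and its last sentence together, by two implications that close a loop. (a) If $(X,J)$ is a complex operator space with $\iota x = Jx$, then $J^2 = -I$ and the \emph{full} family of inequalities holds (all $n$, all $d_i = u_{\theta_i}$). (b) The \emph{refined} hypotheses — $J^2 = -I$, the $n=1$ inequalities for all $\theta$, and for $n>1$ only the cases $d_i \in \{I,J\}$ — already force $(X,J)$ to be a complex operator space. Since the full family of inequalities trivially contains the refined one and (b) supplies the hypothesis of (a), the three properties (complex operator space / full inequalities / refined inequalities) become equivalent, which is the content of the theorem and of its last sentence. Implication (a) is the easy direction: $J^2 = -I$ as $\iota^2 = -1$, and $u_\theta$ is multiplication by the unimodular scalar $e^{\iota\theta} = \cos\theta + \iota\sin\theta$, so $[d_i x_{ij}] = \mathrm{diag}(e^{\iota\theta_1},\dots,e^{\iota\theta_n})[x_{ij}]$ is the product of $[x_{ij}] \in M_n(X)$ with a norm-one scalar matrix; realizing $X$ complex linearly in some $B(H)$, the operator space axioms give $\|[d_ix_{ij}]\| \le \|[x_{ij}]\|$, and the column inequality is the same with right multiplication.

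For (b) I would run through the real multiplier machinery of Section \ref{ospm}, as flagged in the preamble to this section. Given $a,b \in M_n(X)$, embed $\left[\begin{smallmatrix} a & 0 \\ b & 0\end{smallmatrix}\right] \in M_{2n}(X)$ and apply the hypothesis with $d_i = J$ for $i \le n$ and $d_i = I$ for $i > n$ (legitimate since $2n > 1$); as the corner embedding $M_{2n,n}(X) \hookrightarrow M_{2n}(X)$ is isometric, this gives $\bigl\|\left[\begin{smallmatrix} Ja \\ b\end{smallmatrix}\right]\bigr\| \le \bigl\|\left[\begin{smallmatrix} a \\ b\end{smallmatrix}\right]\bigr\|$ at every matrix level, which is precisely the characterization of $J \in \mathrm{Ball}(\cM_\ell(X))$ in terms of $2$-columns over $X$ (see \cite{Sharma} or Section \ref{ospm}); the column inequalities likewise give $J \in \mathrm{Ball}(\cM_r(X))$. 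Now $\cM_\ell(X)$ is a unital operator algebra and $J^3 = J^2 J = -J$, so $-J = J^3 \in \mathrm{Ball}(\cM_\ell(X))$ too, while $J(-J) = I$; thus $J$ is invertible in $\cM_\ell(X)$ with $\|J\|, \|J^{-1}\| \le 1$, so in a completely isometric representation of this operator algebra $J$ is a surjective isometry, i.e.\ a unitary, whence $J \in \Delta(\cM_\ell(X)) = \cA_\ell(X)$ with $J^* = J^{-1} = -J$. Symmetrically $J \in \cA_r(X)$, so $J \in Z(X) = \cA_\ell(X) \cap \cA_r(X)$, and $\{sI + tJ : s,t \in \Rdb\}$ is a unital $C^*$-subalgebra $\cong \Cdb$ of the commutative real $C^*$-algebra $Z(X)$, with $J^* = -J$, $J^2 = -I$.

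Finally I would invoke the fact recalled just above that $X$ is an operator bimodule over $Z(X)$ in the sense of Theorem \ref{5CES}; applying that theorem with $A = B = Z(X)$ yields a real linear complete isometry $\Phi : X \to B(H)$ and a completely isometric nondegenerate representation $\theta : Z(X) \to B(H)$ with $\theta(T)\Phi(x) = \Phi(Tx)$, and $\theta$ is a $*$-representation since it is a nondegenerate representation of a $C^*$-algebra. Then $S := \theta(J)$ is a unitary of $B(H)$ with $S^2 = \theta(-I) = -I$, so $S^* = -S$ and $H$ becomes a complex Hilbert space under $\iota\xi := S\xi$, making $B(H)$ a complex operator space. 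Since $S\Phi(x) = \Phi(Jx) \in \Phi(X)$, the real subspace $\Phi(X)$ is complex linear, hence itself a complex operator space, and $\Phi$ is a complex linear complete isometry of $(X,J)$ onto it; therefore $(X,J)$ is a complex operator space with $\iota x = Jx$, completing (b). (One notes that the $n=1$ hypothesis is not actually used in (b): once $J \in Z(X)$, $u_\theta$ is the unitary $e^{\iota\theta}$ of $Z(X)$, which acts isometrically on $X$ as $X$ is a $Z(X)$-module; it is listed in the theorem only because it is the $n=1$ instance of the row/column inequalities, which the refinement does not touch.)

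The step I expect to be the real obstacle is the opening of (b): matching the bare combinatorial inequalities with $d_i$ ranging only over $\{I,J\}$ to membership in the unit balls of $\cM_\ell(X)$ and $\cM_r(X)$, and — the genuinely new point beyond the complex argument — squeezing \emph{adjointability} of $J$ (hence $J \in \cA_\ell(X)$) out of only the values $\theta \in \{0, \tfrac{\pi}{2}\}$, where the identity $J^3 = -J$ does the work that an appeal to $\theta = \tfrac{3\pi}{2}$ would do in the unrefined statement. Everything downstream of $J \in Z(X)$ is a routine application of Theorem \ref{5CES}.
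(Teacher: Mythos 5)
Your proposal is correct and follows essentially the same route as the paper's proof: both use the two-column/two-row criterion of \cite[Theorem 5.4]{Sharma} to place $J$ in ${\rm Ball}(\cM_\ell(X))$ and ${\rm Ball}(\cM_r(X))$, then deduce from $J^2=-I$ that $J$ is a unitary in the diagonal with $J^*=-J$, hence $J\in Z(X)$. The only (cosmetic) divergence is the endgame -- the paper verifies Ruan's complex axiom directly from the operator $\Cdb$-bimodule structure induced by the $*$-homomorphism $\Cdb\to Z(X)$, while you unpack that same bimodule structure through Theorem \ref{5CES} to build a concrete complex representation -- and your closing observation that the $n=1$ hypothesis is recoverable from the $n>1$ case once $J\in Z(X)$ is a correct (if unstated in the paper) refinement.
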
 

\begin{proof} The necessity of the condition is obvious.   Conversely, given such a map $J$, from the case $n = 1$  we see that $X$ is a complex Banach space.
We may therefore write $Jx = ix$ henceforth.   We will not need this here but since $u_\theta$ has inverse $u_{-\theta}$ it is easy to see that 
the displayed inequalities are equivalent to $$\| [d_i x_{ij}] \| = \| [d_j x_{ij}] \| = \| [x_{ij}] \|, \qquad [x_{ij}] \in  M_n(X) .$$

From the displayed inequalities 
 it follows that if $x, y \in M_n(X)$ and  $$z = \left[
\begin{array}{l}
x \\
y
\end{array}
\right] ,  \; \; \; \; \; \; w = \left[
\begin{array}{cl}
x & 0 \\
y & 0
\end{array}
\right]$$ then by the case of the displayed inequalities where the
$d_k$ are $i$ or $1$ we have.
$$\left| \left| \left[
\begin{array}{l}
i x \\
y
\end{array}
\right] \right| \right|  =  \left| \left| \left[
\begin{array}{cl}
ix & 0 \\
y & 0 
\end{array}
\right] \right| \right|  \leq \| w \| = \| z \| .$$
Thus by \cite[Theorem 5.4]{Sharma}   we see that the map $Jx = ix$ is a contraction  in $\cM_{\ell}^{\Rdb}(X)$.
A similar argument with $[ x  i \; \; y]$ shows that  $J \in \cM_{r}^{\Rdb}(X)$.  Since $J$ is a contraction there with $J^2 = -I$ it follows 
that $J$ is a unitary in  the `diagonal' of $\cM_{\ell}^{\Rdb}(X))$ mentioned above, with adjoint $-J$.
Thus, $J$ is left adjointable, and similarly it is right adjointable. Thus $J \in Z_{\Rdb}(X)$.    The map $\lambda : \Cdb \to Z_{\Rdb}(X)$ with $\lambda(s + it) = s I + t J$ is 
a homomorphism.   Indeed $\lambda$  is a $*$-homomorphism, so contractive, and hence completely contractive \cite{BT}.   So $X$ is a real operator  
$\Cdb$-bimodule since $X$ is an operator 
$Z_{\Rdb}(X)$-bimodule (see the discussion above Proposition
\ref{3inal} if necessary).  In particular $X$ satisfies Ruan's condition
 characterizing complex operator spaces: $\| \alpha x \beta \| \leq \| \alpha \| \| x \| \| \beta \|$ for  $\alpha, \beta \in M_n(\Cdb)$ and $x \in M_n(X)$. 
 \end{proof}
 
{\bf Remarks.}  1)\ 
The quaternions $\Hdb$ are a complex Banach space and real  operator space 
but not a complex operator space  with the `canonical action' coming from 
the copy of $\Cdb$ as a real subalgebra.      Indeed it  is well known that the complexification of $\Hdb$ is $M_2(\Cdb)$, 
which has no nontrivial (complex or real) ideals.  A nonzero real ideal would have to contain one of the  matrix units 
$e_{ij}$ 
(since $I = e_{11} + e_{22}$ and $x = I x I$), but $M_2 e_{ij} M_2 = M_2$.
If $\Hdb$  was a  complex  operator space then its complexification would be $\Hdb \oplus^\infty \bar{\Hdb}$ (see Lemma \ref{lem2}), which has a nontrivial ideal. 

We make the stronger claim: the real operator space $\Hdb$ has no complex operator space  structure at all, that is there is no action $\Cdb \times \Hdb \to \Hdb$ 
extending the action of the reals and making $\Hdb$  a complex operator space. 
This may be seen since according to the examples above Theorem \ref{centmin}, $Z_{\Rdb}(\Hdb)$ is the center $\Rdb 1$ of $\Hdb$,
whereas if $\Hdb$ has a complex structure then $Z_{\Rdb}(\Hdb)$ would have real dimension $> 1$ by 
Corollary \ref{idee} below, or  by the last assertion in Theorem \ref{comul}. 

The quaternions are however  
a real 
 `operator bimodule',  over the complex numbers, in the sense 
of operator space theory (see Theorem \ref{5CES} above for example).  However unlike the situation for complex operator spaces,
the left or right action of $\Cdb$ is not as `centralizers', and $iw \neq wi$ in general, 
so things are quite subtle and different to what we are used to for complex spaces.

\medskip

2)\  One may rephrase the theorem as: if $X$ is a real operator space   and complex Banach space  then $X$ is a complex operator space 
if and only if $\| d x   \|_n \leq \| x \|_n$  and 
$\| xd \|_n \leq \| x \|_n,$
for all $x \in  M_n(X)$ and diagonal matrices $d$ with entries in $\{ 1, i \}$.

One may also prove part of this theorem using matrix theory to 
verify Ruan's condition in the last line of the proof.  
Any complex matrix may be written via the polar decomposition and diagonalization as a product $U E U'$ for 
complex unitary matrices $U, U'$ and diagonal matrix $E$.  It is known that 
any complex unitary matrix $U$ may be written as 
$K D V$ where $K$ and $V$ are real orthogonal matrices and $D$ is diagonal and unitary (see e.g.\ \cite[Theorem 5.1]{FR}).
Putting these facts together 
gives the theorem (except for its final assertion about $1$ and $i$).  

\begin{corollary}  \label{idee}   A  real operator space  $X$ is a complex operator space 
if and only if it possesses a 
real linear antisymmetry 
$J : X \to X$  (antisymmetry means that $J^2 = -I$)  which is also a real operator space centralizer.  \end{corollary}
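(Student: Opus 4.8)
The plan is to read this corollary off Theorem~\ref{chco2} together with a small amount of Gelfand theory for the commutative real $C^*$-algebra $Z_{\Rdb}(X)$; only one implication requires any work. For the ``only if'' direction, suppose $X$ is a complex operator space and set $J(x)=\ii x$. Then $J$ is a real linear antisymmetry, and it obviously satisfies the (clearly necessary) conditions of Theorem~\ref{chco2}; but the proof of that theorem shows that any $J$ with $J^2=-I$ satisfying those displayed inequalities is a unitary in the diagonal of $\cM_\ell^{\Rdb}(X)$ and of $\cM_r^{\Rdb}(X)$, hence lies in $\cA_\ell^{\Rdb}(X)\cap\cA_r^{\Rdb}(X)=Z_{\Rdb}(X)$. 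So $J$ is the desired real linear antisymmetry which is also an operator space centralizer.

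For the converse, suppose $J:X\to X$ is real linear with $J^2=-I$ and $J\in Z_{\Rdb}(X)$. The first step is to observe that $J$ is automatically a skew-adjoint unitary of the $C^*$-algebra $Z_{\Rdb}(X)$, that is $J^*=-J$ and $J^*J=JJ^*=I$. Since $Z_{\Rdb}(X)$ is a unital commutative real $C^*$-algebra, I would pass to its complexification $Z_{\Rdb}(X)_c$, a unital commutative complex $C^*$-algebra, hence $\cong C(K)$ for a compact Hausdorff space $K$; there $J$ is represented by a continuous function $f$ with $f^2=-1$, forcing $f(K)\subseteq\{\pm\ii\}\subset\Tdb$, so that $\bar f=-f$ and $|f|\equiv 1$. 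Restricting back to the real part, $J^*=-J$ and $\|J\|=1$ in $Z_{\Rdb}(X)$. Consequently, for each real $\theta$ the rotation $u_\theta=I\cos\theta+J\sin\theta$ satisfies $u_\theta^*u_\theta=(I\cos\theta-J\sin\theta)(I\cos\theta+J\sin\theta)=I$, so every $u_\theta$ is a unitary of $Z_{\Rdb}(X)$, and so is any diagonal matrix $\mathrm{diag}(u_{\theta_1},\dots,u_{\theta_n})$ in $M_n(Z_{\Rdb}(X))$, which therefore has norm $1$.

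Now I would invoke the fact recalled just before Theorem~\ref{chco2} that $X$ is an operator bimodule over $Z_{\Rdb}(X)$ in the sense of Theorem~\ref{5CES}. Applying the module inequality of Theorem~\ref{5CES} with the left (resp.\ right) coefficient equal to $\mathrm{diag}(u_{\theta_1},\dots,u_{\theta_n})$ yields precisely the two inequalities $\|[d_i x_{ij}]\|_n\le\|[x_{ij}]\|_n$ and $\|[d_j x_{ij}]\|_n\le\|[x_{ij}]\|_n$ in Theorem~\ref{chco2} (with $d_k=u_{\theta_k}$, and both coming from the single $Z_{\Rdb}(X)$-module structure on $X$). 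Since also $J^2=-I$, Theorem~\ref{chco2} gives that $J$ induces a complex operator space structure on $X$ with $\ii x=Jx$, completing the proof. The one genuinely substantive point is the step in the second paragraph, that $J^2=-I$ already forces $J$ to be a skew-adjoint unitary in $Z_{\Rdb}(X)$: this is what turns the rotations $u_\theta$ into true unitaries and hence what lets the module axiom of Theorem~\ref{5CES} produce the matrix-norm inequalities of Theorem~\ref{chco2}. Everything else is a direct citation or routine bookkeeping.
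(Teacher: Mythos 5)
Your proposal is correct and is essentially the argument the paper intends: the corollary is read off from Theorem~\ref{chco2}, with the forward direction coming from the fact (established in that proof) that multiplication by $\ii$ lies in $Z_{\Rdb}(X)$, and the converse resting on exactly the two points you isolate --- that $J^2=-I$ forces $J^*=-J$ in the commutative real $C^*$-algebra $Z_{\Rdb}(X)$ by Gelfand theory, and that the operator $Z_{\Rdb}(X)$-bimodule structure of $X$ then supplies the required matrix-norm contractivity. The only (harmless) cosmetic difference is that you re-derive the displayed inequalities of Theorem~\ref{chco2} from unitarity of the rotations $u_\theta$, whereas the paper's proof of that theorem concludes more directly by noting that $\lambda(s+\ii t)=sI+tJ$ is a $*$-homomorphism $\Cdb\to Z_{\Rdb}(X)$, hence completely contractive, so that $X$ is an operator $\Cdb$-bimodule and Ruan's condition applies.
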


It follows that a real operator space  $X$ has a complex operator space structure if and only if its real centralizer algebra $Z(X)$
has a complex Banach space structure, because this happens if and only if $Z(X)$ possesses an antisymmetry (an element with $J^2 = -I$).  We have some interesting 
and at present exciting looking applications of these results
that we have presented in talks and which we hope to present in a future work.    We mention a sample result:

\begin{corollary} \label{ospbid}  Let $X$ be a real operator space such that the real  operator space bidual $X^{**}$ has complex  operator space structure.  
Then $X$ has  complex  operator space structure.  
\end{corollary} 

There is a similar result for the dual in place of the bidual, but it is more complicated to state.  

A second question one may ask about complex structure is whether we can classify all of the scalar multiplications $\Cdb \times X \to X$ on a real operator space that make $X$ a  complex operator space?   
Note that if $X$ and $Y$ are complex operator spaces then $X \oplus^\infty Y$ is a  complex operator space with a `twisted'
complex multiplication given by $i \cdot (x,y) = (ix, -iy)$.  Surprisingly, it turns out that the latter  {\em is the only} example of a complex structure on an
complex operator space space besides the given one:

\begin{theorem}  \label{coosstr}  A  complex operator space $X$  has a unique complex operator space  structure up to complete isometry. 
Moreover for any other complex operator space  structure  on $X$ there exists 
two complex (with respect to the first  complex structure) subspaces  $X_+$ and $X_{-}$ of $X$ with $X = X_+ \oplus^\infty X_{-}$ completely isometrically, 
such that the second complex multiplication of $\lambda \in \Cdb$ with 
$x_+ + x_{-}$ is $(\lambda \, x_+ ) + (\bar{\lambda} \, x_{-})$, for $x_+ \in X_+,  x_{-} \in X_{-}$.  
 \end{theorem}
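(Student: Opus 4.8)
The plan is to push everything into the commutative real $C^*$-algebra $Z_{\Rdb}(X)$. Write $J_0$ for the given complex operator space structure, so $J_0 x = \iota x$, and let $J$ be an arbitrary complex operator space structure on $X$. By Corollary \ref{idee} both $J_0$ and $J$ are antisymmetries (elements of square $-I$) lying in $Z_{\Rdb}(X)$. As in the proof of Theorem \ref{chco2}, such an antisymmetry is in fact a unitary of $Z_{\Rdb}(X)$ whose adjoint is its own negative; thus $J_0^* = -J_0$ and $J^* = -J$ in $Z_{\Rdb}(X)$. The one point to be careful about here is exactly this --- that a complex structure supplies not merely a centralizer contraction but a skew-adjoint unitary of the centralizer algebra --- since from that point on everything is commutative $C^*$-algebra.

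Next I would form $U := J J_0 \in Z_{\Rdb}(X)$. By commutativity $U = J_0 J$ as well, and then $U^* = J_0^* J^* = (-J_0)(-J) = J_0 J = U$ while $U^2 = J^2 J_0^2 = (-I)(-I) = I$; so $U$ is a selfadjoint unitary, and $f_{\pm} := \frac12 (I \mp U)$ are complementary (orthogonal) projections in $Z_{\Rdb}(X)$, with $f_+ + f_- = I$ and $f_+ f_- = 0$. Put $X_{\pm} := f_{\pm}(X)$. Since $f_{\pm}$ commutes with $J_0$ and with $J$ in the commutative algebra $Z_{\Rdb}(X)$, each $X_{\pm}$ is $J_0$-invariant --- hence a complex subspace of $X$ for the original structure --- and also $J$-invariant. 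The main technical input, and the step I expect to be the real obstacle, is that a projection in the centralizer $Z_{\Rdb}(X)$ is a complete $M$-projection, so that $X = X_+ \oplus^\infty X_-$ completely isometrically. This is the real-case analogue of the familiar complex fact, which I would quote from the real operator space multiplier/centralizer theory in Section \ref{ospm}; it is precisely why the centralizer $Z_{\Rdb}(X)$, rather than just $\cM_\ell^{\Rdb}(X)$, is the correct object here.

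Finally I would read off $J$ on the two summands. On $X_+ = f_+(X)$ the element $U$ acts as $-I$, i.e. $J J_0 = -I$ there, and since $J_0^{-1} = -J_0$ this forces $J = J_0$ on $X_+$. On $X_- = f_-(X)$ the element $U$ acts as $I$, i.e. $J J_0 = I$, whence $J = J_0^{-1} = -J_0$ on $X_-$. Therefore, for $\lambda \in \Cdb$, $x_+ \in X_+$ and $x_- \in X_-$, the second scalar multiplication is $\lambda \cdot (x_+ + x_-) = \lambda x_+ + \bar\lambda x_-$ in terms of the original one; equivalently $(X, J)$ is, as a complex operator space, completely isometrically $X_+ \oplus^\infty \overline{X_-}$. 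This is the asserted normal form, and the uniqueness up to complete isometry follows from it. One bit of bookkeeping to watch: taking $f_+ = \frac12(I - J J_0)$ (rather than $\frac12(I + JJ_0)$) is what makes $X_+$ the summand on which the two structures agree, in accordance with the statement, while interchanging the roles of $+$ and $-$ merely swaps the two summands.
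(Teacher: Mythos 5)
Your argument is essentially the paper's own proof: both complex structures are identified as skew--adjoint unitaries in the commutative real $C^*$-algebra $Z_{\Rdb}(X)$ (the paper gets $J^*=-J$ by viewing an element of square $-1$ as a function via the Gelfand theory of real commutative $C^*$-algebras, which is the same fact you import from the proof of Theorem \ref{chco2}), your selfadjoint unitary $U=JJ_0$ is the negative of the paper's $s=-\iota u$, so your $f_+=\tfrac12(I-U)$ is exactly the paper's projection $p=\tfrac12(I+s)$, and both proofs then invoke that a projection in the centralizer is a complete $M$-projection and read off $J=\pm J_0$ on the two summands. The only difference is at the very end, where you assert that uniqueness up to complete isometry ``follows from'' the normal form while the paper instead writes down the explicit real complete isometry $v=p-p^{\perp}$ and computes $v(uz)=\iota z$; neither version spells out more than the other, so this is a cosmetic rather than a substantive divergence.
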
 

\begin{proof} Let $X$ be a complex operator space with a second scalar multiplication making it a complex operator space. 
Let $u : X \to X$ be multiplication by $i$ in the second scalar multiplication.   As we saw in the last theorem,
multiplication by $i$ in the  first product is in $Z_{\Rdb}(X)$, 
and similarly $u \in Z_{\Rdb}(X)$.
Now $Z_{\Rdb}(X)$ is a real commutative $C^*$-algebra.
In such an algebra an element with $f^2 = -1$ 
may be viewed as a function (see 
Example (3) at the end of 
\cite[Section 5.1]{Li}).   
Then $f(x) = \pm i$, so that the involution/adjoint of $f$ is $-f$.   Thus in $Z_{\Rdb}(X)$ we have that $s = -i u$ is selfadjoint with square $I$.  

Therefore $p = \frac{I + s}{2}$ is a projection in $Z_{\Rdb}(X)$.  
Let $X_+ = pX, X_{-} = (1-p)X$.  Claim:  $X = X_+ \oplus^\infty X_{-}$ completely isometrically and as complex operator spaces. 
To see this recall that $p$ commutes with $i$, so is $\Cdb$-linear. By  \cite[Theorem 5.4]{Sharma}
the map $\tau_p$ in \cite[Theorem 4.5.15]{BLM}
is completely contractive on $C_2(X)$.
By  the latter theorem $p$ is a complex left $M$-projection.  Similarly it is a right $M$-projection, and a (complex) complete 
$M$-projection by \cite[Proposition  4.8.4]{BLM}.  This proves the Claim.

Now $u p = is \, \frac{I + s}{2}  = ip$, and similarly $u p^\perp = - i p^\perp$.   We may define a real complete isometry 
$v : X \to X$ with $v(px + (1-p)y) = px - (1-p)y$.  Then $$v (u z) = v(u (px + (1-p)y)) = v(ipx - i(1-p)y) = ipx + i(1-p)y = i z$$ for 
$z = px + (1-p)y$.     \end{proof}

{\bf Remark.}  In particular, if $X$ is the complexification of a real operator space $Y$, then $Z_{\Cdb}(X) = Z_{\Rdb}(X) = Z_{\Rdb}(Y)_c$
by Lemma \ref{mltsco} and Theorem \ref{comul} below.   If $\theta$ is multiplication by $i$ in a different complex operator space
scalar multiplication then by the last theorem we obtain a projection in $Z(X) = Z(Y)_c$.   If $p = q_c$ for projection $q \in Z(Y)$ then we may obtain an $M$-summand decomposition in $Y$  corresponding to the adjusted complex structure as in Theorem \ref{coosstr}.   However 
there may be projections in $Z(X)$ that are not of this form.

\section{Operator space multipliers  and the operator space  centralizer} \label{ospm}

This section is a little more technical and requires a familiarity with the theory of complex operator space multipliers  and  centralizers
(as in e.g.\ \cite{BLM,BZ}).  We will also use the theory of the real  injective envelope $I(X)$ initiated in \cite{Sharma}, and continued in \cite{BCK}. 
As pointed out in \cite{Sharma},  the important construction  4.4.2 in \cite{BLM} relating 
the injective envelope $I(X)$  to the injective envelope $I({\mathcal S}(X))$
of the Paulsen system, is valid in the real case.    
If $p = 1 \oplus 0$ and $q = 0 \oplus 1$ then as usual $I_{11}$ and $I_{22}$ 
are the corners $p I({\mathcal S}(X)) p$ and  $p^\perp I({\mathcal S}(X)) p^\perp$, and $I(X) = p I({\mathcal S}(X)) p^\perp$. 
It is shown in  \cite[Section 4]{BCK} that $I(X)_c = I(X_c)$.   A similar relation holds for the other two important `corners' of $I({\mathcal S}(X))$: 
  
  \begin{lemma} \label{mlts}  For a real operator space $X$ we have $I_{11}(X_c)  \cong (I_{11}(X))_c$ as unital $C^*$-algebras. 
  Similarly $I_{22}(X_c)  \cong (I_{22}(X))_c$.  \end{lemma}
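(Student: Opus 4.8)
The plan is to deduce this from the already-cited fact that $I(X)_c = I(X_c)$ together with the realification of the construction $4.4.2$ of \cite{BLM} relating $I(X)$ to $I(\cS(X))$. The key point is that the whole package $(I(\cS(X)), p, p^\perp)$ — the injective envelope of the Paulsen system together with its two complementary projections — complexifies correctly, i.e.\ $I(\cS(X_c)) \cong I(\cS(X))_c$ as $C^*$-algebras, carrying $p$ to $p$ and $p^\perp$ to $p^\perp$. Granting this, the three corners complexify simultaneously: $p I(\cS(X_c)) p \cong p I(\cS(X))_c p = (p I(\cS(X)) p)_c$, which is exactly $I_{11}(X_c) \cong (I_{11}(X))_c$, and likewise for the $p^\perp$ corner. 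So the real content is the complexification of the Paulsen-system injective envelope.

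First I would recall (as noted in the excerpt, just before Lemma \ref{mlts}) that $\cS(X)_c \cong \cS(X_c)$, where the identification is inherited from $M_2(B(H))_c \cong M_2(B(H)_c)$ and carries the diagonal projections $p = 1\oplus 0$ and $p^\perp = 0 \oplus 1$ of $\cS(X)$ to the corresponding projections of $\cS(X_c)$. Next I would invoke the real injective-envelope theory from \cite{Sharma,BCK}: an injective envelope of the real operator system $\cS(X)$ is a unital real $C^*$-algebra (or at least a real operator system sitting inside one), and the relation $I(Y)_c \cong I(Y_c)$ for the injective envelope of an operator system $Y$ — which is precisely the kind of statement established in \cite[Section 4]{BCK} for operator spaces, and which one expects to hold verbatim for operator systems by the same rigidity argument — gives $I(\cS(X))_c \cong I(\cS(X_c))$. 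Because $\cS(X)$ is unital and its unit is $1 = p + p^\perp$, the projections $p, p^\perp \in \cS(X) \subset I(\cS(X))$ are sent, under the complexification isometry, to the projections $p \oplus 0, \, 0 \oplus p$ (abusing notation, the same $p, p^\perp$) inside $I(\cS(X_c))$; this is automatic since the complexification map is a unital complete isometry fixing $\cS(X)$ pointwise.

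With that in hand, the corner identity is a purely algebraic consequence: for a projection $e$ in a real $C^*$-algebra $A$, one has $(eAe)_c = e_c A_c e_c$ inside $A_c = A + iA$, since $e(x+iy)e = exe + i\, eye$. Applying this with $A = I(\cS(X))$ and $e = p$ gives $(p I(\cS(X)) p)_c = p I(\cS(X))_c p \cong p I(\cS(X_c)) p$, i.e.\ $(I_{11}(X))_c \cong I_{11}(X_c)$ as (unital) $C^*$-algebras; the case $e = p^\perp$ gives the $I_{22}$ statement. One should double-check the isomorphism is $*$-linear and multiplicative, not merely isometric — but this is immediate because complexification of a real $C^*$-algebra is a $*$-isomorphism onto its image, and corners of $C^*$-algebras are subalgebras, so the restriction of a $*$-isomorphism is again a $*$-isomorphism.

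The main obstacle I anticipate is not the corner manipulation (routine) but pinning down the statement $I(\cS(X))_c \cong I(\cS(X_c))$ with the projections matching — that is, confirming that the real injective-envelope theory of \cite{Sharma,BCK} has been developed for (unital) operator systems, not only for operator spaces, and that the complexification of the injective envelope really is the injective envelope of the complexification at the level of operator systems (including uniqueness/rigidity so that the two complementary projections are transported correctly). If that operator-system version is not already on record in the cited papers, one would instead argue directly: show $I(\cS(X))_c$ is complex-injective (injectivity of real operator systems complexifies, as in the analogous operator-space argument of \cite{BCK}), contains $\cS(X_c) \cong \cS(X)_c$ completely isometrically as an operator subsystem, and is rigid/essential over it — whence it is $I(\cS(X_c))$ by the uniqueness of the injective envelope — and then note the projections are in the common subsystem $\cS(X_c)$ so nothing moves. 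Either way the extra work is bookkeeping about injective envelopes of real operator systems, which the paper has already been leaning on.
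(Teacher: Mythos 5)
Your proposal is correct and follows essentially the same route as the paper: the paper's proof is precisely the chain $I(\cS(X))_c \cong I(\cS(X)_c) \cong I(\cS(X_c))$ as unital $C^*$-algebras via a $*$-isomorphism preserving $p = 1 \oplus 0$, followed by the corner identification $p\,I(\cS(X_c))\,p \cong p\,I(\cS(X))_c\,p = (p\,I(\cS(X))\,p)_c$. Your extra care about the operator-system (rather than operator-space) version of $I(Y)_c \cong I(Y_c)$ and the transport of the projections is exactly the content the paper compresses into the phrase ``via a $*$-homomorphism that preserves the projection $p$.''
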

 
   \begin{proof}   Indeed  $I(\cS(X))_c \cong I(\cS(X)_c) \cong I(\cS(X_c))$ as unital $C^*$-algebras, via a $*$-homomorphism that preserves the projection $p = 1 \oplus 0
   \in \cS(X)$.  Then  
  $$I_{11}(X_c) = p I(\cS(X_c)) p \cong  p I(\cS(X))_c p \cong (p I(\cS(X)) p)_c = (I_{11}(X))_c.$$    Similarly, $I_{22}(X_c)  \cong (I_{22}(X))_c$.   
   \end{proof}  

We defined the left and right real operator space multiplier algebras $\cM_\ell(X)$ and $\cM_r(X)$ just above Theorem \ref{chco2}, as well as the 
real $C^*$-algebras of left and right adjointable multipliers $\cA_\ell(X)$ and $\cA_r(X)$, and the real  operator space centralizer algebra $Z(X)$. 
Giving a little more detail from \cite{Sharma}, 
we give  
  matrix norms as in the complex case (see 4.5.3 in \cite{BLM}): i.e.\ so that  $M_n(\cM_\ell(X)) \cong \cM_\ell(C_n(X))$ isometrically.
  Also $T \in {\rm Ball}(\cM_\ell(X))$ if and only if  there exists a 
  real linear complete isometry $j : X \to B(H)$ for a real Hilbert space $H$ and an operator $S \in {\rm Ball}(B(H))$ with $j(Tx) = Sj(x)$ for all $x \in X$.   
 Complexifying this,  if $T \in \cM_\ell(X)$ we see that  $T_c \in \cM_\ell(X_c)$.  The most useful characterization however is the $\tau_T$ criterion
 which in the real case is \cite[Theorem 5.4]{Sharma}. 
  (Sharma does not state this, but  the real versions of all statements in 4.5.1--4.5.9, and 4.5.12--4.5.13 from \cite{BLM} are valid by the same arguments,
  with the possible exception of Theorem 4.5.2 (v).    Items 4.5.14-4.5.15 are in \cite{Sharma}, and we will check 4.5.10 and 4.5.11 below.   
  We will discuss Banach-Stone theorems such as 4.5.13 briefly in Section \ref{inj4}.) 
  
   As in the complex case we obtain a completely contractive one-to-one homomorphism $\cM_\ell(X) \to CB(X)$, and a completely isometric 
  homomorphism $\cM_\ell(X) \to I_{11}(X)$.   Indeed 
  we have $$\cM_\ell(X) \; \cong \; \{ a \in I_{11}(X) : a j(X) \subset j(X) \} ,$$ where $j : X \to I(X)$
  is the canonical inclusion (that is, $(I(X) , j)$ is an injective envelope of $X$).   
      
  \begin{lemma} \label{mltsco}  For a real operator space $X$ we have $\cM_\ell(X_c) \cong \cM_\ell(X)_c$ completely isometrically as operator algebras, and 
  $T \mapsto T_c$ is the canonical map $\cM_\ell(X)$ into  
  this complexification.  Similarly for the spaces of right multipliers; and 
   $\cA_\ell(X_c) \cong \cA_\ell(X)_c$  $*$-isomorphically.  
   The operator space centralizer algebra $Z(X)$ is a commutative unital real $C^*$-algebra, and $Z(X_c) \cong Z(X)_c$ $*$-isomorphically.
 \end{lemma}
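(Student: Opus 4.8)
The plan is to establish the four assertions of Lemma \ref{mltsco} in sequence, leaning on the identification of $\cM_\ell(X)$ with a corner of the injective envelope recalled just above, together with the fact from \cite[Section 4]{BCK} that $I(X)_c = I(X_c)$ and the companion fact from Lemma \ref{mlts} that $I_{11}(X)_c \cong I_{11}(X_c)$ as unital $C^*$-algebras. First I would treat $\cM_\ell$. Using the concrete description $\cM_\ell(X) \cong \{ a \in I_{11}(X) : a\, j(X) \subset j(X) \}$, I would check that under the $*$-isomorphism $I_{11}(X)_c \cong I_{11}(X_c)$ (which is compatible with the inclusion $j_c : X_c \to I(X_c)$ coming from complexifying $j : X \to I(X)$, since $I(X)_c = I(X_c)$), an element $a + ib$ with $a, b \in I_{11}(X)$ satisfies $(a+ib)\, j_c(X_c) \subset j_c(X_c)$ if and only if both $a$ and $b$ carry $j(X)$ into $j(X)$; this is immediate from comparing real and imaginary parts, using that $j_c(X_c) = j(X) + i\, j(X)$ and $j(X)$ is a real subspace. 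Hence $\cM_\ell(X_c) \cong \cM_\ell(X) + i\, \cM_\ell(X) = \cM_\ell(X)_c$ as sets, and the matrix norm identity $M_n(\cM_\ell(X)) \cong \cM_\ell(C_n(X))$ together with $C_n(X)_c = C_n(X_c)$ (a case of $M_{n,m}(X)_c = M_{n,m}(X_c)$ from the introduction) shows this is a complete isometry of operator algebras; that $T \mapsto T_c$ is the canonical map into the complexification is then clear by tracking the definition of $T_c$ on $X_c = X + iX$.

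Next I would verify that this identification $\cM_\ell(X) + i\,\cM_\ell(X) \hookrightarrow \cM_\ell(X_c)$ really is a \emph{reasonable} complexification, i.e.\ that $\cM_\ell(X_c)$ carries the conjugate-linear completely isometric period-$2$ automorphism $\theta$ with fixed points $\cM_\ell(X)$, by the criterion \cite[Theorem 3.2]{RComp} recalled in the introduction. Here I would use that $I_{11}(X_c)$, being $\cong I_{11}(X)_c$ as a $C^*$-algebra, carries $\theta_{I_{11}(X)}$, that this restricts to $\theta_{I(X)}$ on the corner $I(X_c) = I(X)_c$, and hence preserves the subspace $j_c(X_c)$; therefore it restricts to a map on $\cM_\ell(X_c) = \{a \in I_{11}(X_c) : a j_c(X_c) \subset j_c(X_c)\}$ with fixed point set exactly $\cM_\ell(X)$. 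The right-multiplier case is identical with $I_{11}$ replaced by $I_{22}$. For $\cA_\ell$: since $\cA_\ell(X) = \Delta(\cM_\ell(X))$ and complexification commutes with the diagonal ($\Delta(A)_c = \Delta(A_c)$, as noted in Section 1), we get $\cA_\ell(X_c) = \Delta(\cM_\ell(X_c)) = \Delta(\cM_\ell(X)_c) = \Delta(\cM_\ell(X))_c = \cA_\ell(X)_c$, and this is a $*$-isomorphism because the diagonal construction is $*$-preserving. Likewise for $\cA_r$.

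For the last sentence, that $Z(X)$ is a commutative unital real $C^*$-algebra was already asserted above Theorem \ref{chco2} (it is $\cA_\ell(X) \cap \cA_r(X)$ inside $CB(X)$, and $\cA_\ell(X)$, $\cA_r(X)$ commute via their representations in $I(\cS(X))$, so the intersection is commutative; it contains $I$ so is unital). It remains to see $Z(X_c) \cong Z(X)_c$. From the $\cA_\ell$ and $\cA_r$ identifications, and the fact that $\cA_\ell(X_c)$ and $\cA_r(X_c)$ both sit inside $CB(X_c) = CB(X)_c$ (the complexification of $CB(X)$, from \cite[Section 2]{BCK}), I would argue that under the identification $CB(X_c) = CB(X) + i\,CB(X)$ one has $\cA_\ell(X_c) \cap \cA_r(X_c) = (\cA_\ell(X) + i\,\cA_\ell(X)) \cap (\cA_r(X) + i\,\cA_r(X)) = (\cA_\ell(X)\cap\cA_r(X)) + i\,(\cA_\ell(X)\cap\cA_r(X)) = Z(X)_c$, where the middle equality uses that intersecting the real and imaginary parts separately is legitimate because each of $\cA_\ell(X)$, $\cA_r(X)$ is a real subspace of the real space $CB(X)$; then the $C^*$-structure is inherited, giving a $*$-isomorphism. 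The step I expect to be the main obstacle is the compatibility bookkeeping in the very first paragraph: one must be careful that the $*$-isomorphism $I_{11}(X)_c \cong I_{11}(X_c)$ from Lemma \ref{mlts}, the equality $I(X)_c = I(X_c)$ from \cite{BCK}, and the complexified embedding $j_c$ all fit together into one commuting picture, so that the characterization of $\cM_\ell$ as ``elements of the $11$-corner preserving the embedded copy of the space'' transports cleanly under complexification; once that diagram is pinned down, the rest is essentially formal manipulation of real and imaginary parts.
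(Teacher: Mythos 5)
Your argument is essentially the paper's own proof: the same identification of $\cM_\ell(X_c)$ with $\{a \in I_{11}(X_c) : a\,j_c(X_c) \subset j_c(X_c)\}$, transported through $I_{11}(X)_c \cong I_{11}(X_c)$ and resolved into real and imaginary parts, followed by the same $\Delta(\cdot)_c = \Delta(\cdot_c)$ manipulation for $\cA_\ell$ and the same intersection computation for $Z$. Your extra paragraph verifying reasonableness via the period-$2$ automorphism is a legitimate substitute for the paper's quicker appeal to the $2\times 2$ matrix picture of the complexification.

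The one concrete omission is in the final claim that $Z(X)$ is a real $C^*$-algebra. You write that ``the $C^*$-structure is inherited,'' but $Z(X) = \cA_\ell(X) \cap \cA_r(X)$ is an intersection of two $C^*$-algebras each carrying its own involution, and for $Z(X)$ to be a $C^*$-algebra one must check that these two involutions agree on the intersection; the earlier assertion above Theorem \ref{chco2} that you lean on is itself deferred to this lemma, so invoking it here is circular. The paper closes this by observing that the two involutions of $T_c$ coincide in $Z(X_c)$ by the complex theory \cite{BZ}, and hence the involutions of $T$ in $\cA_\ell(X)$ and $\cA_r(X)$ coincide since $T \mapsto T_c$ is a faithful $*$-homomorphism on each. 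You should add this step; the rest of your argument stands.
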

 
   \begin{proof}  
  We may identify 
  $\cM_\ell(X_c)$ with $$\{ a \in I_{11}(X_c) : a j_c(X_c) \subset j_c(X_c) \} \cong \{ b + i c \in (I_{11}(X))_c : ( b + i c)  j_c(X_c) \subset j_c(X_c) \}  .$$
  The latter inclusion is equivalent to $b$ and $c$ being in $\cM_\ell(X)$.    It follows (if necessary thinking of $b + ic$ as the usual $2 \times 2$ matrix as in (\ref{ofr})), 
  that the right side of the last displayed equation is $\cM_\ell(X)_c$.  That is, $\cM_\ell(X_c) \cong \cM_\ell(X)_c$ completely isometrically as operator algebras. 
  It is easy to see the assertion regarding $T_c$: if $Tj(x) = aj(x)$ for $a \in I_{11}(X)$ then  
  $T_c \, (j_c)(x+iy) = a j_c(x)+i a j_c(y)$.
   Similar arguments hold in the $\cM_r(X)$ case.
  
 Using the fact $\Delta(A_c) = \Delta(A)_c$ from the introduction concerning 
  the diagonal operator algebra, we have  $$\cA_\ell(X)_c = \Delta(\cM_\ell(X))_c \cong   \Delta(\cM_\ell(X)_c) \cong  \Delta(\cM_\ell(X_c)) = \cA_\ell(X_c),$$  and 
  $$Z(X_c) = \cA_\ell(X_c) \cap \cA_r(X_c) = \cA_\ell(X)_c \cap \cA_r(X)_c = Z(X)_c .$$ 
Moreover for $T \in Z(X)$ the involution of $T$ in $\cA_\ell(X)$ coincides with its involution in $\cA_r(X)$, 
since the analogous statement is true  in $Z(X_c)$ \cite{BZ}.
So $Z(X)$ is a real $C^*$-algebra.   \end{proof}

We have  $\cM_\ell(X) = \{ T \in B(X) : T_c \in \cM_\ell(X_c) \}$.  Indeed for example clearly $\tau_T$ is completely contractive if and only if  $\tau_{T_c} = (\tau_T)_c$  is completely contractive.  The map $T \mapsto T_c$ restricts to a faithful
contractive homomorphism, hence $*$-homomorphism, $\cA_\ell(X) =  \Delta(\cM_\ell(X)) \to  \cA_\ell(X_c) = \Delta(\cM_\ell(X))$.
Thus  if $T \in \cA_\ell(X)$ then $(T_c)^* = (T^*)_c$.   
It is easy to see that $\cA_\ell(X) = \{ T \in B(X) : T_c \in \cA_\ell(X_c) 
\}$, and a  similar relation holds  for  $Z(X)$. 
Indeed   $T_c \in Z(X_c) = \cA_\ell(X_c) \cap \cA_r(X_c)$ iff  $T \in \cA_\ell(X) \cap \cA_r(X)$, and iff $T \in Z(X)$.

As in the complex case,  $X$ is an operator bimodule (or $h$-bimodule) over all of these spaces in the sense of \cite[Section 3.1]{BLM}. 
 Indeed $X$ is 
  an operator  $\cM_\ell(X)$-$\cM_\ell(X)$-bimodule, 
  which in turn can be viewed as an $\cM_\ell(X)$-$\cM_\ell(X)$-sub-bimodule of $X_c$.   The actions of $\cA_\ell(X), \cA_r(X)$ and $Z(X)$ on $X$ 
are restrictions  of the  $\cM_\ell(X)$-$\cM_\ell(X)$-bimodule action.  
  These bimodules can as usual  all be concretely viewed at any time 
   as subspaces of a $C^*$-algebra, e.g.\ of the $C^*$-algebra  which is the injective envelope of the Paulsen system.   For example $X$ regarded as 
  an operator  $\cM_\ell(X)$-$\cM_\ell(X)$-bimodule may be viewed in the $1$-$2$ corner of $I({\mathcal S}(X))$, with $\cM_\ell(X)$ a subalgebra
  of the $1$-$1$ corner $I_{11}$ as above.  The action $T(x)$, for $T \in \cM_\ell(X)$ and $x \in X$, 
  is identified in the usual way with the product of the two $2 \times 2$ matrices in $I({\mathcal S}(X))$
  corresponding to the copy of $T$ in $I_{11}$ and the copy of $X$ in the $1$-$2$ corner.   
  There is a subtlety however with the $Z(X)$-bimodule action. 
  We often write $a x = x a$ for $a \in Z(X), x \in X$ and this is perfectly true if interpreted correctly as an abstract bimodule action. 
 However  if $X$ is represented concretely, such as above in the $1$-$2$ corner of $I(\cS(X))$, one should usually rather write 
   $\rho(a) x = x \pi(a)$, where $\pi$ and $\rho$ are   faithful $*$-homomorphisms with domain $Z(X)$.    
  That is, although the right action of $Z(X)$ on $X$ coincides with the left action, if one represents $X$ as a concrete bimodule as above say, $Z(X)$ usually gets represented in `two different places'.

\begin{proposition}
\label{3inal}  Let  $X$ be a real  operator space.
\begin{enumerate}
\item [{\rm (1)}]  A real linear map $u  \colon X \rightarrow X$ is in
$\A_l(X)$ if and only if there exist a real Hilbert space $H$,
a  linear complete isometry $\sigma  \colon X
\rightarrow B(H)$, and a map $v : X \to X$ with $\sigma(ux)^* \sigma(y)  = \sigma(x)^* \sigma(vy)$ for all $x, y \in
X$. 
\item [{\rm (2)}]  Same as {\rm (1)},  but with the conditions involving $v$ replaced by 
existence of $S \in B(H)$ with 
$\sigma(ux) = S \sigma(x)$ for all $x \in
X$, and such that also $S^* \sigma(X) \subset \sigma(X)$. 
\item [{\rm (3)}]  $\A_l(X) \cong \{ a \in I_{11}(X) : a X \subset X
\; \; \text{and} \; \; a^* X \subset X \}$
as $C^*$-algebras.
\item[{\rm (4)}]  If $u, S$ and $\sigma$ are as in
{\rm (1)} (resp.\ (2)), then
the involution $u^*$ in $\A_l(X)$ is $v$ 
(resp.\ the map $x \mapsto \sigma^{-1}(S^* \sigma(x))$ on
$X$).
\item [{\rm (5)}] The canonical inclusion
map from  $\A_l(X)$ into $CB(X)$
is a completely isometric homomorphism.  Moreover $\| T \|_{cb} = \| T_c \| = \| T \|$ for $T \in \A_l(X)$, 
so the inclusion viewed as a 
map  into $B(X)$ is an isometric homomorphism.
\end{enumerate}
\end{proposition}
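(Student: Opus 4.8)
The plan is to run the proof in close parallel with the complex case, reducing each assertion either to the complex theorem for $\A_l(X_c)$ via the identifications $I_{11}(X_c)\cong I_{11}(X)_c$ and $\cM_\ell(X_c)\cong\cM_\ell(X)_c$ from Lemmas \ref{mlts} and \ref{mltsco}, or to a direct argument inside the injective envelope of the Paulsen system. First I would recall that $\cM_\ell(X)\cong\{a\in I_{11}(X):aj(X)\subset j(X)\}$ and that $\A_l(X)=\Delta(\cM_\ell(X))$, which immediately gives the characterization in (3): an element $a\in I_{11}(X)$ lies in $\Delta(\cM_\ell(X))$ iff both $a$ and $a^*$ carry $j(X)$ into $j(X)$, since $\cM_\ell(X)^*$ is identified with $\{a:a^*j(X)\subset j(X)\}$ and the diagonal is the intersection. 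Concretely viewing $X$ in the $1$–$2$ corner of $I(\cS(X))$ and $\cM_\ell(X)$ in $I_{11}$, the actions are just matrix multiplication, so all these conditions are transparent.

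For (1) and (2): the ``only if'' direction in (2) follows by representing $X$ completely isometrically in $B(H)$ via $I(X)\subset B(H)$ (thinking of $a\in I_{11}(X)$ acting on the left), taking $S$ to be (the image of) $a$; then $a\in\A_l(X)$ forces $S^*\sigma(X)\subset\sigma(X)$ by (3). The ``if'' direction of (2) is the real analogue of \cite[Proposition 4.5.10]{BLM} (or the corresponding centralizer statement): given such $\sigma$ and $S$, one checks $\tau_u$ is completely contractive — equivalently, as noted just before the proposition, $\tau_{u_c}=(\tau_u)_c$ is completely contractive, which follows from the complex result applied to $\sigma_c$ and $S$ acting on the complexification — so $u\in\cM_\ell(X)$; and since also $S^*\sigma(X)\subset\sigma(X)$ gives $u^*\in\cM_\ell(X)$ by the same reasoning, $u\in\Delta(\cM_\ell(X))=\A_l(X)$. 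Statement (1) is then equivalent to (2): given $v$ as in (1), the identity $\sigma(ux)^*\sigma(y)=\sigma(x)^*\sigma(vy)$ says precisely that the left multiplier $u$ (realized as $S$) has $S^*$ implementing $v$ on $\sigma(X)$, hence $v=u^*$, which is exactly part (4); conversely (2) produces such a $v$. So (1), (2) and the first half of (4) come together as one package, exactly as in the complex development.

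The second half of (4) — identifying $u^*$ with $x\mapsto\sigma^{-1}(S^*\sigma(x))$ — is immediate once we know $S^*\sigma(X)\subset\sigma(X)$: the map $x\mapsto\sigma^{-1}(S^*\sigma(x))$ is a left multiplier (witnessed by $S^*$) whose own adjoint is witnessed by $S$, hence equals $u$, so it is $u^*$; and this matches the $v$ of (1) by the computation above. For (5), the statement $\A_l(X)\to CB(X)$ is a completely isometric homomorphism is inherited from the fact (recalled after Lemma \ref{mltsco}) that $\cM_\ell(X)\to CB(X)$ is a completely contractive homomorphism together with $\A_l(X)\hookrightarrow\cM_\ell(X)$ being a $C^*$-subalgebra and the canonical embedding $\cM_\ell(X)\cong\{a\in I_{11}(X):aj(X)\subset j(X)\}\hookrightarrow I_{11}(X)$ being completely isometric; since a faithful $*$-homomorphism on a $C^*$-algebra is completely isometric, the restriction to $\A_l(X)$ is a complete isometry onto its range in $CB(X)$. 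For the final equalities $\|T\|_{cb}=\|T_c\|=\|T\|$ for $T\in\A_l(X)$: the map $T\mapsto T_c$ is (as noted before the proposition) a faithful $*$-homomorphism $\A_l(X)\to\A_l(X_c)$, hence isometric, so $\|T\|_{cb}=\|T_c\|_{cb}=\|T_c\|$ using that $T_c$, lying in the commutative-at-least-abelian $C^*$-algebra $\A_l(X_c)$ acting on an operator space, already has $\|T_c\|_{cb}=\|T_c\|$ (a $*$-homomorphism is completely contractive, and on a $C^*$-algebra the cb-norm of the identity inclusion equals the norm); and $\|T\|\le\|T\|_{cb}=\|T_c\|$ while $\|T_c\|=\|T\|$ since $T=T_c|_X$ and $X\subset X_c$ isometrically, giving $\|T_c\|\le\|T\|$ as well — actually one gets $\|T_c\| = \|T\|$ directly from $\norm{T_c}_{cb}=\norm{T}_{cb}$ in Proposition \ref{cbcmpx} once cb-norms and norms agree. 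I expect the main obstacle to be the ``if'' direction of (2)/(1): verifying complete contractivity of $\tau_u$ in the real setting. The cleanest route is to avoid a hands-on matrix estimate and instead complexify — show $\tau_{u_c}=(\tau_u)_c$ and invoke \cite[Theorem 5.4]{Sharma} together with the complex Proposition 4.5.10 of \cite{BLM} applied to $X_c$ — but one must take care that the witnessing data $(\sigma_c, S)$ on $X_c$ genuinely satisfies the complex hypotheses (in particular that $S$ and $S^*$ preserve $\sigma_c(X_c)$, which follows from $S,S^*$ preserving $\sigma(X)$ by complex-linearity), and that the adjoint $u^*$ computed in $\cM_\ell(X)$ restricts correctly from $\cM_\ell(X_c)$, which is exactly the content of the $(T_c)^*=(T^*)_c$ remark preceding the proposition.
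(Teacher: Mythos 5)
Your treatment of (1)--(4) is essentially the paper's: reduce to the complex case by complexifying the witnessing data and using $\cM_\ell(X_c)\cong\cM_\ell(X)_c$, $\A_l(X)=\{T\in B(X): T_c\in\A_l(X_c)\}$ and $(T_c)^*=(T^*)_c$. (The paper complexifies the identity in (1) and cites \cite[Theorem 8.4.4]{BLM}, while you complexify the data in (2) and cite the $\tau$-criterion; these are the same reduction. Your claim that (1) and (2) are ``one package'' should be ordered more carefully --- (2) gives (1) by setting $v=\sigma^{-1}(S^*\sigma(\cdot))$, (1) gives membership in $\A_l(X)$ by complexification, and membership gives (2) via the $I_{11}(X)$ representation --- but the content is there.)

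The genuine gap is in (5). First, your justification for the complete isometry of $\A_l(X)\to CB(X)$ --- ``a faithful $*$-homomorphism on a $C^*$-algebra is completely isometric'' --- does not apply: $CB(X)$ is not a $C^*$-algebra and the inclusion is not a $*$-homomorphism, so all you have a priori is a completely contractive faithful homomorphism from a $C^*$-algebra into an operator algebra. Second, and more seriously, your argument for $\|T\|_{cb}=\|T\|$ is circular: you obtain $\|T_c\|\le\|T\|$ ``once cb-norms and norms agree,'' which is exactly the assertion to be proved (restriction only yields $\|T\|\le\|T_c\|$). Both points are repaired by the ingredient the paper actually uses and you never invoke, namely Theorem \ref{cid}: a contractive one-to-one homomorphism from a real $C^*$-algebra into a real Banach algebra is automatically an isometry. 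Applied to the inclusion $\A_l(X)\to B(X)$ this gives $\|T\|=\|T\|_{\cM_\ell(X)}=\|T\|_{cb}$, and combined with the chain $\|T\|_{\cM_\ell(X)}=\|T_c\|_{\cM_\ell(X_c)}=\|T_c\|_{cb}=\|T\|_{cb}$ (from Lemma \ref{mltsco}, the complex case, and Proposition \ref{cbcmpx}) and its matricial version, it yields the complete isometry into $CB(X)$ and the equality $\|T\|_{cb}=\|T_c\|=\|T\|$. This automatic-isometry statement is a nontrivial real fact --- the paper devotes Theorem \ref{cid} to proving it --- so it cannot be waved through as a standard $C^*$-identity.
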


\begin{proof}  If $S \in B(H)$ exists with all the properties in (2)  then $v = \sigma^{-1}(S^* \sigma(\cdot))$ satisfies the condition in (1). 
Conversely if $v$ satisfies the condition in (1) then $v_c : X_c \to X_c$ satisfies
$\sigma(u_c x)^* \sigma(y)  = \sigma(x)^* \sigma(v_c y)$ for all $x, y \in
X_c$. By \cite[Theorem 8.4.4]{BLM} we deduce that $u_c \in \A_l(X_c)$ so that $u \in \A_l(X)$ by an earlier observation.
   The rest of the proofs of  (2)--(4) are as in the complex case and using the real cases discussed earlier    of results cited in   \cite[Proposition 4.5.8]{BLM}.    
 
   For (5), if $\A_l(X)$ we have  $$\| T \|_{\cM_\ell(X))} = \| T_c \|_{\cM_\ell(X_c))}  = \| T_c \|_{\rm cb} = \| T \|_{\rm cb}.$$  From the complex case 
this equals $\| T_c \|$, by the same proof but using Theorem \ref{cid} (the real version of \cite[Theorem A.5.9]{BLM}).   The matricial case follows similarly, as in the complex case. 
The last assertion follows from Theorem \ref{cid}. 
 \end{proof}

Note that $Z_{\Rdb}(X)$ is a real commutative $C^*$-algebra but need not be a real $C(K)$ space (a simple example is $X = \Cdb$ considered as a real  operator space). 

\bigskip

{\bf Examples.} \begin{itemize} 
\item [(1)]  For any real approximately unital operator algebra $A$ we have that $\cM_\ell(A) = LM(A)$ and $\cM_r(A) = RM(A)$, just as in the proof of \cite[Proposition 4.5.11]{BLM}.  Hence  
$\cA_\ell(A) = \Delta(\cM_\ell(A)) = \Delta(LM(A))$.  If $B$ is a real $C^*$-algebra then it is then easy to deduce  
that $\cA_\ell(B)$  is just $M(B)$.  
We claim  that $Z(A)$ is the diagonal of the center of $M(A)$.  Indeed, if $T \in Z(A)$ then $T_c \in Z_{\Cdb}(A_c)$, so that 
$T_c$ is in the diagonal of the center of $M(A_c)$.  Thus $T \in M(A)$ by our earlier discussion of the multiplier algebra of $A$.
It is easy to see that $T$ is in the  center of $M(A)$.  The map $Z(A)$ into the center of $M(A)$ is a contractive homomorphism, so maps into 
 the diagonal of the center of $M(A)$.  Conversely, if $T$ is in  the diagonal of the center of $M(A)$ then $T \in \Delta(LM(A)) = \cA_\ell(A)$.
 Similarly $T \in  \cA_r(A)$, so that $T \in Z(A)$. 

  \item [(2)] If $X = H_{\rm col}$, a real Hilbert column space, then as in the complex case 
$\cM_\ell(X) = \cA_\ell(X) = B(H)$, $\cM_r(X) = \Rdb I$, so that $Z(X) = \Rdb I$.  

\item [(3)]  Suppose that  $X$ is a a minimal real operator space, that is $X  = {\rm Min}(E)$ for a real Banach space $E$ (see \cite{Sharma}).  Then 
it is known that the Banach space injective envelope is a $C(K)$ space, and 
\cite[Proposition 4.10]{Sharma} shows that the injective envelope of $X$ is a $C(K)$ space with its canonical operator space structure.
By the argument in \cite[4.5.10]{BLM} we have that $\cM_\ell(X)$ is a minimal operator space, and $\cA_\ell(X)$ is a commutative $C^*$-algebra.
Indeed we have the following, which  is the real case of
 \cite[Proposition 7.6 (i)]{BZ}.   \end{itemize} 
 
 \begin{theorem} \label{centmin} For a real Banach space $X$ we have $$\cM_\ell({\rm Min}(X)) = \cA_\ell({\rm Min}(X)) = Z({\rm Min}(X)) = {\rm Cent}(X) = \cM(X)$$
 where $ {\rm Cent}$ and $\cM$ are the real Banach space centralizer and multiplier algebra \cite{Behr,HWW}.  This also agrees 
 with $\{ T \in B(X) : T_c \in \cM(X_c) \}$, and for such $T$ we have $\| T \| = \| T_c \|$. 
\end{theorem}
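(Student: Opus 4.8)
The plan is to identify each of the five algebras, via the complexification, with $\{T\in B(X): T_c\in\cM(X_c)\}$, using the complex case \cite[Proposition 7.6(i)]{BZ} together with the classical $M$-structure theory of real Banach spaces \cite{Behr,HWW}. The first step is the identification $({\rm Min}(X))_c\cong{\rm Min}(X_c)$ completely isometrically: realizing ${\rm Min}(X)$ inside $C(K,\Rdb)$ for $K$ the weak*-compact ball of $X^*$, its reasonable complexification lies inside $C(K,\Rdb)_c\cong C(K,\Cdb)$ (as operator spaces); since commutative $C^*$-algebras carry the minimal operator space structure and subspaces of minimal operator spaces are minimal, this complexification is ${\rm Min}(X_c)$.

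Next I would bring in the complex case. By \cite[Proposition 7.6(i)]{BZ}, $\cM_\ell({\rm Min}(X_c))$, $\cA_\ell({\rm Min}(X_c))$, $Z({\rm Min}(X_c))$, ${\rm Cent}(X_c)$ and $\cM(X_c)$ all coincide. Combining this with the identities recorded earlier in the section, $\cM_\ell(Y)=\{T\in B(Y): T_c\in\cM_\ell(Y_c)\}$, $\cA_\ell(Y)=\{T\in B(Y): T_c\in\cA_\ell(Y_c)\}$ and $Z(Y)=\{T\in B(Y): T_c\in Z(Y_c)\}$ applied to $Y={\rm Min}(X)$ (so that $Y_c={\rm Min}(X_c)$ by the first step, and $B(Y)=B(X)$), we obtain
\[\cM_\ell({\rm Min}(X)) = \cA_\ell({\rm Min}(X)) = Z({\rm Min}(X)) = \{T\in B(X): T_c\in\cM(X_c)\}.\]
On the Banach-space side, for a real Banach space the eigenvalue functions of multipliers are $\Rdb$-valued, so $\cM(X)$ is already a commutative real $C^*$-algebra and coincides with the centralizer ${\rm Cent}(X)$; and the classical theory \cite{Behr,HWW} gives ${\rm Cent}(X)=\{T\in B(X): T_c\in{\rm Cent}(X_c)\}$, which by the complex case equals $\{T\in B(X): T_c\in\cM(X_c)\}$. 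Hence all five algebras coincide with $\{T\in B(X): T_c\in\cM(X_c)\}$.

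Finally, since $\cM_\ell({\rm Min}(X))=\cA_\ell({\rm Min}(X))$, Proposition \ref{3inal}(5) applies to every $T$ in this common algebra and gives $\|T\|=\|T\|_{\rm cb}=\|T_c\|$, which is the last assertion. I expect the main obstacle to be the bookkeeping around the classical inputs: checking carefully that ${\rm Cent}(X)=\cM(X)$ in the real case, that ${\rm Cent}(X_c)={\rm Cent}(X)_c$, and that ${\rm Min}(X)_c={\rm Min}(X_c)$, so that the operator-space multiplier theory of ${\rm Min}(X)$ and the classical $M$-structure of the Banach space $X$ complexify compatibly. Everything else is a formal consequence of \cite[Proposition 7.6(i)]{BZ} and the recorded complexification identities for $\cM_\ell$, $\cA_\ell$ and $Z$.
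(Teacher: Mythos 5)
Your overall transfer strategy is sound for the three operator--space algebras: the identification $({\rm Min}(X))_c\cong {\rm Min}(X_c)$, the recorded identities $\cM_\ell(Y)=\{T\in B(Y):T_c\in\cM_\ell(Y_c)\}$ (and their analogues for $\cA_\ell$ and $Z$), and the use of Proposition \ref{3inal}(5) for $\|T\|=\|T_c\|$ all match ingredients of the paper's proof. The genuine gap is at your pivot: it is \emph{not} true that for a complex Banach space $E$ the five algebras $\cM_\ell({\rm Min}(E))$, $\cA_\ell({\rm Min}(E))$, $Z({\rm Min}(E))$, ${\rm Cent}(E)$, $\cM(E)$ all coincide. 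The complex theory gives $\cM_\ell({\rm Min}(E))=\cM(E)$ and $\cA_\ell({\rm Min}(E))=Z({\rm Min}(E))={\rm Cent}(E)$, but for complex spaces ${\rm Cent}(E)$ can be strictly smaller than $\cM(E)$ (for the disc algebra the centralizer is the scalars, while multiplication by any element of the algebra is a multiplier). The collapse ${\rm Cent}=\cM$ is a purely real--scalar phenomenon, which is exactly why this theorem needs a separate proof. With the corrected complex input your transfer only yields $\cM_\ell({\rm Min}(X))=\{T:T_c\in\cM(X_c)\}$ and $\cA_\ell({\rm Min}(X))=Z({\rm Min}(X))=\{T:T_c\in{\rm Cent}(X_c)\}$, and identifying these two sets --- equivalently proving $\cM_\ell({\rm Min}(X))\subset\cA_\ell({\rm Min}(X))$, i.e.\ that every left multiplier of ${\rm Min}(X)$ is adjointable --- is precisely the nontrivial content your argument does not supply. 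Relatedly, your appeal to ``the classical theory'' for ${\rm Cent}(X)=\{T\in B(X):T_c\in{\rm Cent}(X_c)\}$ is not a recorded fact and is not obvious, since multipliers and centralizers are defined by eigenvalue equations at extreme points of the dual ball, and the extreme points of ${\rm Ball}((X_c)^*)$ are not controlled by those of ${\rm Ball}(X^*)$.

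The paper closes exactly this gap by a circle of inclusions proved concretely rather than by pure transfer. For ${\rm Cent}(X)=\cM(X)\subset Z({\rm Min}(X))$ it represents a real Banach--space multiplier as multiplication by a \emph{real-valued} (hence selfadjoint) $f\in C(K)$ under an isometry $\sigma:X\to C(K)$, which is automatically a complete isometry on ${\rm Min}(X)$; real-valuedness of $f$ is what makes $T$ adjointable and central. For the reverse inclusion $\cM_\ell({\rm Min}(X))\subset\cM(X)$ it uses that $I({\rm Min}(X))=C(K,\Rdb)$, the description $\cM_\ell({\rm Min}(X))=\{f\in C(K):f\,j(X)\subset j(X)\}$, and the real version of \cite[Theorem 3.7.2]{BLM}. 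You would need to add arguments of this kind (or some other proof that $T_c\in\cM(X_c)$ forces $T_c\in{\rm Cent}(X_c)$ for real $T$) before your proof is complete.
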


\begin{proof}  It is known (and obvious from the definitions)  that ${\rm Cent}(X) = \cM(X)$ in the real case \cite{Behr,HWW}. 
We have $T \in  \cM_\ell({\rm Min}(X))$ if and only if  $$T_c \in \cM_\ell(({\rm Min}(X))_c) = \cM_\ell({\rm Min}(X_c)) = \cM(X_c),$$ 
using \cite[Proposition 2.1]{Sharma} and 4.5.10 in \cite{BLM}. 
If  $T \in \cM(X)$ then
  as in \cite[Theorem 2.1]{BShil} one sees that there is a compact space $K$, a linear isometry $\sigma : X \to C(K),$ and
an $f \in C(K),$ such that $\sigma(Tx)= f \sigma(x)$ for $x \in X$.   
By a universal property  of Min (cf.\ \cite{Sharma}) it is easy to see $\sigma$ is a complete isometry on Min$(X)$.
So $T$ is in $\cA_\ell({\rm Min}(X))$ (since $f$ is selfadjoint) and indeed in $Z({\rm Min}(X))$ since the $C^*$-algebras concerned 
all commute.   Thus we have shown that 
$${\rm Cent}(X) = \cM(X) \subset Z({\rm Min}(X)) \subset \cA_\ell({\rm Min}(X)) \subset \cM_\ell({\rm Min}(X)) .$$ 
Now $I({\rm Min}(X)) = (C(K,\bR),j)$, a real $C(K)$ space, as stated above the theorem. 
We have from the real version of \cite[Proposition 4.4.13]{BLM} in conjunction with the representation of $\cM_\ell$ above Lemma \ref{mltsco}, that 
$\cM_\ell({\rm Min}(X)) = \{ f \in C(K) : f j(X) \subset j(X) \}$. By the real version of \cite[Theorem 3.7.2]{BLM} 
we see that $\cM_\ell({\rm Min}(X)) \subset \cM(X)$.

That  $\| T \| = \| T_c \|$ follows from the above and e.g.\.  Proposition \ref{3inal} (5).
\end{proof}

{\bf Remark.}   In \cite{Behr,HWW} it is shown that for a complex Banach space $X$ we have ${\rm Cent}_{\Cdb}(X) = {\rm Cent}_{\Rdb}(X) + i \, {\rm Cent}_{\Rdb}(X)$.  

\begin{theorem} \label{comul}  If $X$ is a complex operator space viewed as a real operator space $X_r$ we have 
  $\cM_\ell(X_r) = \cM_\ell^{\Cdb}(X)$, $\cA_\ell(X_r) = \cA_\ell^{\Cdb}(X)$,
     and $Z_{\Rdb}(X_r) = Z_{\Cdb}(X)$.  The projections in the latter algebra are the 
   (complex) complete $M$-projections (of e.g.\ {\rm \cite[Section 7]{BZ}} or \cite[Chapter 4]{BLM}).  
\end{theorem}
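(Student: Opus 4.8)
\textbf{Proof proposal for Theorem \ref{comul}.} The plan is to reduce everything to the left-multiplier case and then transport along the complexification, using that $X_r$ is a reasonable real form of $X \oplus^\infty \bar X$ (see Lemma \ref{lem2}) and that all the relevant algebras behave functorially under complexification via Lemma \ref{mltsco}. First I would treat $\cM_\ell$. The cleanest route is the $\tau_T$ criterion: a real linear $T : X_r \to X_r$ lies in $\cM_\ell(X_r)$ iff $\tau_T$ is completely contractive on $C_2(X_r)$ (the real version of \cite[Theorem 4.5.15]{BLM}, i.e.\ \cite[Theorem 5.4]{Sharma}), and similarly a complex linear $T$ lies in $\cM_\ell^{\Cdb}(X)$ iff the corresponding complex $\tau_T$ is completely contractive on $C_2(X)$. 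Since a complex linear $T$ is in particular real linear, and the real and complex matrix norms on $C_2(X)$ agree (a complex operator space has a unique underlying real operator space structure), $\cM_\ell^{\Cdb}(X) \subset \cM_\ell(X_r)$ isometrically. For the reverse inclusion, the point is that an element $T$ of $\cM_\ell(X_r)$ is automatically complex linear: by the representation above Lemma \ref{mltsco}, $T$ is given by left multiplication by some $a \in I_{11}(X_r)$ with $a X_r \subset X_r$; but $I(X_r)$ is a complex ternary system (being the real form of $I(X)$, and $X$ already complex, one checks $I(X_r)$ carries the complex structure making $X_r \hookrightarrow I(X_r)$ complex linear — indeed $I(X_r) = I(X \oplus^\infty \bar X)$ which is complex), and left multiplication by $a$ commutes with the complex scalar action on $X$ because that action is itself implemented by central multipliers (Corollary \ref{idee} / Theorem \ref{chco2}). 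Hence $T$ is complex linear and the matrix norms identify, giving $\cM_\ell(X_r) = \cM_\ell^{\Cdb}(X)$ completely isometrically.

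Next I would deduce the $\cA_\ell$ statement: $\cA_\ell(X_r) = \Delta(\cM_\ell(X_r)) = \Delta(\cM_\ell^{\Cdb}(X)) = \cA_\ell^{\Cdb}(X)$, once one checks the involutions agree, which they do since the involution on $\cA_\ell$ is the one inherited from $I_{11}$ and the identification of $\cM_\ell$'s is via the common embedding into $I_{11}(X_r) = I_{11}(X)$ (here using that the complex and real injective envelopes of the Paulsen corners match, as in Lemma \ref{mlts}, once $X$ is already complex). Doing the same for the right multipliers and intersecting yields $Z_{\Rdb}(X_r) = \cA_\ell(X_r) \cap \cA_r(X_r) = \cA_\ell^{\Cdb}(X) \cap \cA_r^{\Cdb}(X) = Z_{\Cdb}(X)$.

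Finally, for the identification of the projections in $Z_{\Rdb}(X_r) = Z_{\Cdb}(X)$ with the complex complete $M$-projections: a projection $p \in Z_{\Cdb}(X)$ is, by the complex theory (\cite[Section 7]{BZ}, \cite[Chapter 4]{BLM}), precisely an element which is simultaneously a left and right complex $M$-projection, equivalently a complete $M$-projection by \cite[Proposition 4.8.4]{BLM}; conversely each complex complete $M$-projection lies in $Z_{\Cdb}(X)$. This is exactly the content of \cite[Proposition 7.6]{BZ} and its surrounding discussion, so I would just cite it; the real packaging adds nothing new here because $Z_{\Rdb}(X_r)$ has already been shown to coincide with $Z_{\Cdb}(X)$ as a $C^*$-algebra.

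I expect the main obstacle to be the bookkeeping in the first paragraph: making rigorous that $I(X_r)$ genuinely carries a complex structure restricting to the given one on $X$, and that membership in $\cM_\ell(X_r)$ forces complex linearity rather than merely real linearity. One clean way around this, which I would use if the direct argument gets awkward, is to complexify: $T \in \cM_\ell(X_r)$ iff $T_c \in \cM_\ell((X_r)_c)$ by the displayed fact just after Lemma \ref{mltsco}, and $(X_r)_c \cong X \oplus^\infty \bar X$ by Lemma \ref{lem2}, whence $\cM_\ell((X_r)_c) \cong \cM_\ell^{\Cdb}(X) \oplus \cM_\ell^{\Cdb}(\bar X)$; chasing $T$ through this decomposition and using that $T$ came from $X_r$ (so is $\theta$-fixed) pins it down as an element of $\cM_\ell^{\Cdb}(X)$ with the right norm. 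The isometry $\| T \| = \| T_c \|$ on these algebras is already recorded in Proposition \ref{3inal}(5) and Lemma \ref{mltsco}, so the norm assertions come for free.
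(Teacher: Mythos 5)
Your proposal is correct in essence and takes the same route as the paper: the load-bearing step in both is that multiplication by $i$ is a centralizer of $X_r$ (Theorem \ref{chco2}/Corollary \ref{idee}), hence commutes with every element of $\cM_\ell(X_r)$, forcing real left multipliers to be $\Cdb$-linear; the $\cA_\ell$ and $Z$ statements then follow by taking diagonals and intersecting, and the final assertion is quoted from \cite[Section 7]{BZ}, exactly as in the paper. One parenthetical in your first paragraph is wrong, though inessential: $I(X_r)$ is \emph{not} $I(X\oplus^\infty\bar X)$ --- you have conflated $X_r$ with its complexification $(X_r)_c$; the correct statements are $I^{\Rdb}(X_r)=I^{\Cdb}(X)$ (Remark 2 after the theorem, via \cite[Section 4]{BCK}) and $I((X_r)_c)\cong I(X)\oplus^\infty\overline{I(X)}$ (Proposition \ref{ciop} with Lemma \ref{lem2}). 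Your fallback route through $\cM_\ell((X_r)_c)\cong \cM_\ell^{\Cdb}(X)\oplus\cM_\ell^{\Cdb}(\bar X)$ would also need that direct-sum decomposition of the multiplier algebra to be justified, but since your primary argument does not rely on either of these points, the proof stands.
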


\begin{proof}  If $u$ is multiplication by $i$ on $X$ then $u \in {\rm Ball}(\cM_\ell(X_r))$,
and indeed is in the centralizer as we saw in the proofs in Section \ref{csiros}.
Hence $u$ commutes with all members of $\cM_\ell(X_r)$. 
It follows that operators in $\cM_\ell(X_r)$ are $\Cdb$-linear on $X$, and therefore are in  $\cM_\ell^{\Cdb}(X)$.
Thus $\cM_\ell(X_r) = \cM_\ell^{\Cdb}(X)$.   This is a completely isometric 
identification since $C_n^{\Rdb}(X_r) \cong C_n^{\Cdb}(X)$ real completely isometrically and 
thus $$M_n(\cM_\ell(X_r)) =  \cM_\ell(C_n(X_r)) =  \cM_\ell^{\Cdb}(C_n(X)) \cong M_n(\cM_\ell^{\Cdb}(X))$$ 
  isometrically.  
Taking the `diagonal', we have  $\cA_\ell(X_r) = \cA_\ell^{\Cdb}(X)$.
We used the fact here that for a unital subalgebra $A$ of $B(H)$ in the complex setting, if we view 
  $B(H)$ as a real $C^*$-algebra $B(H)_r$ then the `real and complex diagonal algebras' of $A$ coincide.
Then $$Z_{\Rdb}(X_r) = \cA_\ell(X_r) \cap \cA_r(X_r) = \cA_\ell^{\Cdb}(X) \cap \cA_r^{\Cdb}(X) = Z_{\Cdb}(X)$$ as desired.
The last assertion may be found in e.g.\ \cite[Section 7]{BZ}. 
\end{proof}

{\bf Remarks.}   1)\ The above is  quite different to the Banach space case, where ${\rm Cent}_{\Rdb}(X)$ need not contain ${\rm Cent}_{\Cdb}(X)$.   Indeed
  if we take $X = \Cdb$ then ${\rm Cent}_{\Fdb}(X) = \Fdb \cdot I$, while $Z(X) = \Cdb = Z_{\Rdb}(X_r) = \cA_\ell(X)$. 
  Nor do we have $Z_{\Rdb}(X) \subset {\rm Cent}_{\Rdb}(X)$, where the latter is the Banach space centralizer algebra, in contrast to the complex theory
   \cite[Corollary 7.2]{BZ}.  Indeed $Z_{\Rdb}(A)$ is not a subset of ${\rm Cent}_{\Rdb}(A)$ in general even for real commutative two dimensional $C^*$-algebras.
      Nor 
   is ${\rm Cent}_{\Rdb}(X) \subset Z_{\Rdb}(X)$ in general.  Indeed  suppose that $X$ is 
   a complex operator space with a complex $M$-projection $P$ which is not a complete $M$-projection.
   Then $P$ is a real $M$-projection  
   so is in  ${\rm Cent}_{\Rdb}(X)$.   
   However $P$ is not in $Z_{\Rdb}(X)$, for if it were then one sees that $P^{(n)}$ is an $M$-projection for all $n$, contradicting that 
   $P$ is not a complete $M$-projection.  Sharma does not discuss real complete $M$-projections, but just as in the complex case one sees that these 
   are just the $P \in B(X)$ with $P^{(n)}$ is an $M$-projection for all $n$. 
   
   We recall that a real $M$-projection on a complex Banach space is
  a complex $M$-projection \cite[Theorem I.1.23]{HWW}.  Similarly, a real complete 
  $M$-projection $P$ on a complex operator  space is
  a complex complete $M$-projection.  This may be seen easily by the proof of the Claim in Theorem  \ref{coosstr}. 

\smallskip 
  
2)\ If $X$ is a complex operator space then we recall from \cite[Section 4]{BCK} that $I^{\Rdb}(X) = I^{\Cdb}(X)$.
Replacing $X$ by $I(X)$ in the last result
we see that $I_{11}^{\Rdb}(X) = \cM_\ell(I(X)_r) = \cM_\ell^{\Cdb}(I(X)) = I_{11}^{\Cdb}(X)$. Similarly for $I_{22}$. 
  
  \begin{proposition}  \label{mlbid} We have  $T \in \cM_\ell(X)$ if and only if  $T^{**} \in \cM_\ell(X^{**})$, and the multiplier norms of these coincide. 
  Similarly $T \in Z(X)$ if and only if  $T^{**} \in Z(X^{**})$.  Also 
 $\cA_\ell(X) \subset \cA_\ell(X^{**})$ isometrically  via  a faithful $*$-homomorphism.   
  \end{proposition}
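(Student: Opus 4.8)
The plan is to reduce everything to the complex case via complexification, using the identities already established: $\cM_\ell(X)_c \cong \cM_\ell(X_c)$ (Lemma \ref{mltsco}), $Z(X)_c \cong Z(X_c)$ (Lemma \ref{mltsco}), $X^{**}_c \cong (X_c)^{**}$ (which follows from the real version of the standard fact that complexification commutes with taking the bidual, as discussed in the introduction around the identity $M_{n,m}(X)_c = M_{n,m}(X_c)$ and the functoriality of complexification established in Sections \ref{os}--\ref{om}), and the characterization $\cM_\ell(X) = \{ T \in B(X) : T_c \in \cM_\ell(X_c) \}$ together with its analogues for $Z$ and $\cA_\ell$ discussed just before Proposition \ref{3inal}. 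First I would note that for a complex operator space $W$ the statement is the known complex result $T \in \cM_\ell(W) \iff T^{**} \in \cM_\ell(W^{**})$ with equal multiplier norms (this is in \cite[Section 4.5]{BLM}, specifically the complex analogue being a standard fact about the second dual of the multiplier algebra), and likewise $T \in Z_{\Cdb}(W) \iff T^{**} \in Z_{\Cdb}(W^{**})$ from \cite[Section 7]{BZ}.

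The key steps, in order: (i) Observe $(T^{**})_c = (T_c)^{**}$ under the identification $X^{**}_c \cong (X_c)^{**}$; this is a routine check since both are the bitranspose of the map $T_c$ on $X_c$, and the complexification of a bidual is computed entrywise-compatibly with the complexification of the original space. (ii) Then $T \in \cM_\ell(X) \iff T_c \in \cM_\ell(X_c)$ (by the characterization before Proposition \ref{3inal}) $\iff (T_c)^{**} \in \cM_\ell((X_c)^{**})$ (complex case applied to $W = X_c$) $\iff (T^{**})_c \in \cM_\ell(X^{**}_c) \iff T^{**} \in \cM_\ell(X^{**})$ (same characterization applied to the real operator space $X^{**}$). (iii) For the norm equality, the multiplier norm of $T$ equals $\| T_c \|_{\cM_\ell(X_c)}$ since $\cM_\ell(X)_c \cong \cM_\ell(X_c)$ is a completely isometric identification under which $T \mapsto T_c$; and $\| T_c \|_{\cM_\ell(X_c)} = \| (T_c)^{**} \|_{\cM_\ell((X_c)^{**})}$ by the complex case; and this in turn equals the multiplier norm of $T^{**}$ in $\cM_\ell(X^{**})$ by the same completely isometric complexification identification applied to $X^{**}$. (iv) The $Z(X)$ statement follows by the identical chain using $Z(X)_c \cong Z(X_c)$ and the characterization $T \in Z(X) \iff T_c \in Z(X_c)$ together with the complex fact for $Z_{\Cdb}$. (v) For the final assertion, $\cA_\ell(X) = \Delta(\cM_\ell(X))$ maps into $\cA_\ell(X^{**}) = \Delta(\cM_\ell(X^{**}))$: the inclusion $T \mapsto T^{**}$ is a homomorphism (bitransposition is multiplicative on $\cM_\ell$, again by complexification from the complex case or directly), it is isometric by (iii), it preserves involutions since the involution on $\cA_\ell$ is inherited from the enveloping $C^*$-algebra $I_{11}$ and is compatible with passing to the bidual (one can see this by complexifying, since $(T_c)^*$ corresponds to $(T^*)_c$ as noted before Proposition \ref{3inal}, and the adjoint on $\cA_\ell(X^{**}_c)$ restricts correctly), so it is a faithful $*$-homomorphism; faithfulness is immediate from isometry.

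The main obstacle I expect is step (i) together with the underlying identification $X^{**}_c \cong (X_c)^{**}$: one must be careful that the canonical maps are set up so that $(T^{**})_c$ and $(T_c)^{**}$ literally agree, not merely up to an unspecified isomorphism, and that the multiplier structure (the matrix norms on $\cM_\ell$, i.e.\ the identification $M_n(\cM_\ell(X)) \cong \cM_\ell(C_n(X))$) is transported correctly through all these identifications. A secondary subtlety is that $\cM_\ell(X^{**})$ uses the operator space structure on $X^{**}$ (the standard dual operator space structure iterated twice), and one should confirm that its complexification is indeed $(X_c)^{**}$ with its dual operator space structure — but this is exactly the kind of functoriality of complexification under duality that the introduction promises is established in Sections \ref{os}--\ref{om} (and the fact $CB(X_c,Y_c)$ is a reasonable complexification of $CB(X,Y)$, proved in \cite[Section 2]{BCK}, specializes to give the predual/dual statements). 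Once these identifications are in place the rest is a formal transfer from the complex theory.
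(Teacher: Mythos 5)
Your proposal is correct, and for the $Z(X)$ assertion it is literally the paper's argument: the paper writes $T \in Z(X)$ iff $T_c \in Z(X_c)$ iff $(T_c)^{**} \in Z((X_c)^{**})$ (by \cite[Theorem 7.4]{BZ}), then uses $(T_c)^{**} = (T^{**})_c$ and $(X_c)^{**} = (X^{**})_c$ exactly as you do. The one place you diverge is the first assertion: the paper disposes of the $\cM_\ell$ statement by remarking that the complex argument of \cite[Proposition 5.14]{BZ} runs verbatim in the real setting (no complexification), whereas you transfer it from the complex case through the chain $T \in \cM_\ell(X) \Leftrightarrow T_c \in \cM_\ell(X_c) \Leftrightarrow (T_c)^{**} \in \cM_\ell((X_c)^{**}) \Leftrightarrow T^{**} \in \cM_\ell(X^{**})$. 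Both routes are sound; yours is more uniform (the same template handles $\cM_\ell$, $Z$, and the norm equality) at the cost of invoking the identifications $\cM_\ell(X)_c \cong \cM_\ell(X_c)$, $(X_c)^{**} = (X^{**})_c$ and $(T_c)^{**} = (T^{**})_c$, all of which the paper has already established or uses itself, so the concerns you flag about compatibility of the canonical maps are genuine but resolved (e.g.\ $(T_c)^{**}$ and $(T^{**})_c$ are both weak* continuous extensions of $T_c$, hence agree). For the $\cA_\ell$ inclusion your argument can be tightened: once $T \mapsto T^{**}$ is an isometric unital homomorphism $\cM_\ell(X) \to \cM_\ell(X^{**})$, \cite[Theorem 2.6]{BT} immediately gives that it carries the diagonal $\cA_\ell(X) = \Delta(\cM_\ell(X))$ $*$-isomorphically into $\Delta(\cM_\ell(X^{**})) = \cA_\ell(X^{**})$, which is cleaner than tracking the involution through $I_{11}$. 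You also correctly claim only an inclusion there, consistent with the paper's remark that the converse fails for $\cA_\ell$.
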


\begin{proof}  The first statement follows  by the argument  in  \cite[Proposition 5.14]{BZ}.   Similarly for the second statement. 
For $Z(X)$ we have $T \in Z(X)$ iff $T_c \in Z(X_c)$, and  iff $(T_c)^{**}  \in Z((X_c)^{**})$ by \cite[Theorem 7.4]{BZ}.
 Since $(T_c)^{**}  = (T^{**})_c$ and $(X_c)^{**} = (X^{**})_c$, the latter happens iff $(T^{**})_c \in Z((X^{**})_c)$, which as before is
 equivalent to $T^{**} \in Z(X^{**})$. \end{proof}

{\bf Remark.}  We do not in general have the `if and only if' above
 in the case of  $\cA_\ell(X)$.  That is one may have $T \in  \cM_\ell(X) \setminus \cA_\ell(X)$ with $T^{**} \in \cA_\ell(X^{**})$.

 The remaining 
  results in Section 4.6 in \cite{BLM}  seem unchanged in the real case.  In Section 4.7, the first result follows by complexification, and the rest of the results
  follow with unchanged proofs.  As in the complex case, $\cM_{\ell}(X)$ is a dual operator algebra, and its subspace $\cA_{\ell}(X)$ is a real von Neumann subalgebra, if $X$ is a dual operator space.  
  It follows that $Z(X)$ is a commutative real von Neumann subalgebra, whose projections as in the complex case are the complete $M$-projections.  
  Indeed $Z(X)$ may be viewed as the fixed points of the weak* continuous period 2 $*$-automorphism of the 
  complex commutative von Neumann algebra $Z(X)_c \cong Z(X_c)$.
  Most of 
Section 4.8 is covered in \cite{Sharma}, with the notable exception of complete (two-sided) $M$-ideals.
We have already mentioned in Remark 1 after Theorem \ref{comul}  that  one sees  just as in the complex case that complete $M$-projections 
   are just the $P \in B(X)$ with $P^{(n)}$  an $M$-projection for all $n \in \Ndb$.    
   As in the complex case these are the left $M$-projections that are also right $M$-projections, and they are precisely the 
   projections in $Z(X)$ as in the complex case.  
 
 Note that if a real operator algebra $A$ has both a left and a right cai
   then it has a cai as in e.g.\ \cite[Proposition 2.5.8]{BLM}.

\begin{theorem} \label{cez}  If $A$ is an approximately unital real operator algebra then
the complete $M$-ideals (resp.\ complete $M$-summands) are 
exactly the two-sided ideals in $A$ with contractive approximate identity
(resp.\ $Ae$ for a central projection $e \in M(A)$). 
\end{theorem}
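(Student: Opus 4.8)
The plan is to deduce this result from the complex case by complexification, exactly as the rest of the section does, combining it with the identifications already established for multipliers and centralizers of complexifications.

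First I would recall the complex version of this statement, namely \cite[Theorem 4.8.5 or 4.9.x]{BLM} (the complex analogue of what we want): for an approximately unital complex operator algebra $B$, the complete $M$-ideals are precisely the closed two-sided ideals with a cai, and the complete $M$-summands are precisely $Be$ for central projections $e \in M(B)$. Next I would pass to $A_c$, which by \cite[Section 2]{BCK} and the discussion in the introduction is an approximately unital complex operator algebra with $(A_c)$ approximately unital (since $A$ has a cai, $A_c$ has one: $(e_\lambda)$ is also a cai in $A_c$), and with $M(A_c) \cong M(A)_c$ and $Z(A_c) \cong Z(A)_c$, $\Delta(M(A_c)) = \Delta(M(A))_c$. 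The key bridge is that a subspace $J \subset A$ is a complete $M$-summand (resp.\ complete $M$-ideal) if and only if $J_c \subset A_c$ is: for complete $M$-summands this is because the associated complete $M$-projection $P$ on $A$ satisfies $P \in Z(A)$, and $P \in Z(A)$ iff $P_c \in Z(A_c)$ by Lemma~\ref{mltsco} (and its amplified/weak* versions), with the projections in $Z(A)$ being exactly the complete $M$-projections as noted in Remark~1 after Theorem~\ref{comul}; for complete $M$-ideals one uses that $J$ is a complete $M$-ideal iff $J^{\perp\perp}$ is a complete $M$-summand of $A^{**}$, together with $(J_c)^{\perp\perp} = (J^{\perp\perp})_c$ and $(A_c)^{**} = (A^{**})_c$, and Proposition~\ref{mlbid}.

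Then I would run the argument: given a complete $M$-summand $J$ of $A$, we get that $J_c$ is a complete $M$-summand of $A_c$, hence by the complex theorem $J_c = A_c e'$ for a central projection $e' \in M(A_c) = M(A)_c$; the associated complete $M$-projection is multiplication by $e'$, which lies in $Z(A_c) = Z(A)_c$ and is a projection there, hence by Lemma~\ref{mltsco} it has the form $e_c$ for a projection $e$ which lies in $Z(A) \subset M(A)$ and is central in $M(A)$, giving $J = J_c \cap A = A_c e_c \cap A = Ae$. Conversely $Ae$ for such a central projection $e$ is clearly a complete $M$-summand since multiplication by $e$ is a projection in $Z(A)$. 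For complete $M$-ideals: if $J$ is a complete $M$-ideal of $A$, then $J^{\perp\perp}$ is a complete $M$-summand of $A^{**}$, so by the summand case it is $A^{**}f$ for a central projection $f \in M(A^{**}) = A^{**}$ (a von Neumann algebra's multiplier algebra is itself); one checks $J^{\perp\perp}$ is a weak* closed two-sided ideal, so $J = J^{\perp\perp} \cap A$ is a closed two-sided ideal, and it has a cai by the standard argument (e.g.\ \cite[Proposition 2.5.8 and 4.8.x]{BLM}) applicable in the real case, noting the remark just before the theorem that a left and right cai yield a cai. Conversely a closed two-sided ideal $J$ with cai has $J_c$ a closed two-sided ideal of $A_c$ with cai, hence a complete $M$-ideal of $A_c$ by the complex theorem, hence $J$ is a complete $M$-ideal of $A$.

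I expect the main obstacle to be the bookkeeping showing that the defining data (the complete $M$-projection, the central projection $e \in M(A)$, the cai) descend correctly from $A_c$ to $A$ and that the relevant identifications ($M(A_c) = M(A)_c$, $Z(A_c) = Z(A)_c$, $(A_c)^{**} = (A^{**})_c$, preservation of $M$-projections under complexification and under bidual) are all compatible with one another; each piece is available earlier in the paper, but assembling them so that "$J_c = A_c e_c$" really forces "$J = Ae$" with $e$ in the right place requires some care. A secondary point is confirming that the real case of the complex cai-for-$M$-ideals argument genuinely goes through without appealing to selfadjoint/positive spans that fail in the real setting; here passing through $A_c$ again, or citing the real approximate identity results already noted in the excerpt, avoids the difficulty.
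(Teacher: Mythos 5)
Your proposal is correct and follows essentially the same route as the paper: complexify, apply the complex Theorem 4.8.5 of \cite{BLM} to $A_c$, and descend using $M(A_c)=M(A)_c$ (the paper justifies the bridge ``$J$ is a complete $M$-summand/ideal iff $J_c$ is'' via \cite[Proposition 5.8, Corollaries 5.9 and 5.11]{Sharma} rather than via the centralizer identification, but this is the same idea). One small point of care: a projection in $Z(A)_c$ need not in general be of the form $e_c$ for a projection $e\in Z(A)$, so the correct way to land the central projection in $M(A)$ is the one the paper (and implicitly your bookkeeping) uses --- the complete $M$-projection onto $J_c$ is $P_c$ for the real projection $P$, so $eA=P_cA=PA\subset A$ forces $e\in M(A)$.
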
  

\begin{proof} Let $P$ be a complete  $M$-projection on an approximately unital operator algebra.
So it is a left $M$-projection and a right 
$M$-projection.  
Thus $P_c$  is a  complex left $M$-projection on $A_c$  by \cite[Proposition 5.8]{Sharma}, and similarly on the right,
hence it  is a complex  complete  $M$-projection.    By the complex case in Theorem 4.8.5 from \cite{BLM} we have that 
$P_c(x) = ex$   for a central projection $e \in M(A_c)$.  Since $M(A_c) = M(A)_c$ by Lemma \ref{mulr},
and $P_c A = e A \subset A$, it follows that $e \in M(A)$.   This proves the statement about $M$-summands.  
Similarly it follows as in \cite[Corollary 5.9]{Sharma}
   that a real subspace $J$ is a complete $M$-ideal (or summand) in $X$ if and only if  $J_c$ is  is a complete $M$-ideal  (or summand) in $X_c$.
   From this the statement about $M$-ideals follows, or it may be deduced from \cite[Corollary 5.11]{Sharma}.
   \end{proof} 

It is not at all obvious if the word `complete' in the last result can be dropped, indeed such a result has not previously appeared in 
the literature even in the real $C^*$-algebra case.   For a commutative real $C^*$-algebra $A$ the $M$-ideals are the closed ideals.  Indeed by \cite[Theorem 3.1]{BCK} $A^{**}$ is the 
$L^\infty$ sum of real and complex $C(K)$ spaces.  As we said earlier real and complex $M$-projections on a complex space coincide. 
Any real $M$-summand $V$ of  $A^{**}$ corresponds to 
real $M$-summands of these $C(K)$ spaces. 
Thus $V = e A^{**}$ for an orthogonal projection $e \in A^{**}$. 
Recently in joint work with Matt Neal, A.\ Peralta and S.\ Su we have proved  that the $M$-ideals in  a general real $C^*$-algebra (and in more general objects of interest) are the closed ideals, and coincide with the complete $M$-ideals. 
 See the Acknowledgements below. In particular, we have fully generalized Theorem 4.8.5 of \cite{BLM}  to the real case.
  We remark that \cite[Theorem 4.8.5 (4)]{BLM} was later improved to replace LM by the multiplier algebra $M(A)$, and this holds also in the real (even Jordan) operator algebra case as will be discussed in a future paper (see the Acknowledgements below).
   
 \section{Operator spaces} \label{os}
 
 The remaining sections of our paper, as stated in the introduction, verify in a very economical, linear,
 and somewhat systematic format
the real case of the remaining theory in several chapters of \cite{BLM}, and establish how the basic constructions there 
interact  with the complexification. 
We begin with Chapter 1 of that text. 
 A few of the facts below appear in the papers of Ruan and Sharma (see e.g.\ the list at the end of Section 2 of \cite{Sharma}). 
 Ruan showed in \cite{RComp} that 
$(X_c)^* \cong (X^*)_c$ completely  isometrically.   For linear functionals we have 
$\| \varphi \| = \| \varphi \|_{cb}$, as follows from \cite[Lemma 5.2]{ROnr}.
  It is pointed out however in \cite[Proposition 2.8]{Sharma} that $(X_r)^* \neq (X^*)_r$  completely isometrically
for a complex operator space 
(the mistakes in the proof of that Proposition are easily fixed).    Here $X_r$ is the space regarded as a real operator space.

We note that a real $C^*$-algebra $A$ has a unique operator space structure such that $M_n(A)$ is a real $C^*$-algebra for all $n \in \Ndb$.   This is
because a real $*$-algebra has at most one $C^*$-algebra norm, as follows immediately 
by complexification, or from e.g.\ \cite[Lemma 2.2]{Sharma}.   Proposition 1.2.4 in \cite{BLM} then follows from \cite[Theorem 2.6]{BT}.
The assertion there that $\pi$ is a complete quotient map with closed range can easily be seen by considering $\pi_c$. 
 It is interesting however that the Russo-Dye theorem holds for real von Neumann algebras \cite{MMPS}; the Kadison-Schwartz inequality is discussed there too. 
 On quotient operator spaces we have $X_c/Y_c \cong (X/Y)_c$ by Proposition \ref{cbcmpx} (the proof of this in \cite{Sharma} may be corrected as   in \cite[Proposition 5.5]{BK}). 
   It is easy to see that the  operator space complexification commutes with 
$c_0$ and $\ell^\infty$ sums 
of operator spaces (see e.g.\  the proof of \cite[Lemma 5.14]{Sharma}).   The rest of Section 1.2 in \cite{BLM} is in the small real operator space literature, or follows as in the complex case or (as in the case of e.g.\ 1.2.31) by complexification. 
 The exception is 1.2.30, for obvious reasons; we will not discuss interpolation in the present paper. 

In what follows $\bar{X}$ is the `conjugate operator space’ of a complex operator space $X$.  That is  $\bar{X}$ is
the set of formal symbols $\bar{x}$ for $x \in X$, 
with matrix norms $\| [ \bar{x}_{ij} ] \| = \| [x_{ij}] \|$.    With `conjugate scalar multiplication' $\bar{X}$ is a complex operator space; the map $x \mapsto \bar{x}$ is 
conjugate linear.  (See Proposition 2.1 and the Remark after it
in e.g.\ \cite{BWinv}.)   This defines a functor on the complex operator space, with the action on morphisms being $\bar{T}(\bar{x}) = \overline{T(x)}$. 
In practice this construction $\bar{X}$ is  often extremely useful.   The first reason for this is in checking certain facts about a complexification $Y_c$ we 
very often must use the important conjugate linear isomorphism $\theta_Y$ on $W = Y_c$.  One often wants to apply results 
from the complex theory to $\theta_Y$, and to do this it is often convenient to view conjugate linear  maps into $W$ as complex linear maps into $\bar{W}$. 
E.g.\ Lemma \ref{oone} is an example of this. 
The next result is also useful for similar reasons (see \cite{Cecco} for an example of this).

 \begin{lemma} \label{lem2}  The  operator space complexification of a complex operator space $X$ is complex linearly completely isometric to $X \oplus^\infty \bar{X}$, where we 
identify $X$ with $\{ (x,\bar{x}) : x \in X \}$, and identify $x + iy$ in the complexification with $( x + iy , \overline{x - iy}) \in X \oplus^\infty \bar{X}$.  
If $B$ is a complex $C^*$-algebra then  $B_c \cong B \oplus^\infty  B^{\circ}$ $*$-isomorphically (as complex $C^*$-algebras).
Here $B^\circ$ is the `opposite' $C^*$-algebra of $B$.
  \end{lemma}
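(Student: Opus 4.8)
Here the plan is to obtain the first assertion from the criterion for reasonable complexifications recalled in the introduction (\cite[Theorem 3.2]{RComp}), and then to read off the $C^*$-statement from it together with the standard identification $\bar{B} \cong B^\circ$. First I would put $W = X \oplus^\infty \bar{X}$ and define $\kappa : X \to W$, $\kappa(x) = (x,\bar{x})$; since the conjugate operator space carries the same matrix norms (so that $M_n(W) = M_n(X) \oplus^\infty \overline{M_n(X)}$ isometrically), $\kappa$ is a real linear complete isometry. Define $\Phi : W \to W$ by $\Phi(a,\bar{c}) = (c,\bar{a})$. One then checks directly that $\Phi$ is conjugate linear, that $\Phi^2 = \mathrm{id}$, that $\Phi$ is a complete isometry (on $M_n(W)$ it merely interchanges the two coordinates and conjugates each, which does not change the maximum of the two matrix norms), and that its fixed point set is exactly $\kappa(X)$. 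By \cite[Theorem 3.2]{RComp}, $(W,\kappa)$ is then a reasonable operator space complexification of $X$ regarded as a real operator space, and by the uniqueness of such complexifications (Ruan \cite{RComp}), $X_c$ is complex linearly completely isometric to $W$.

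The displayed identifications are then just bookkeeping. Under $\kappa$ we identify $X$ with $\kappa(X) = \{(x,\bar{x}) : x \in X\}$, and a typical element $\kappa(x) + i\kappa(y)$ of $X_c = \kappa(X) \oplus i\,\kappa(X)$ becomes $(x+iy,\ \bar{x}+i\bar{y})$; by the conjugate scalar multiplication on $\bar{X}$ we have $i\bar{y} = \overline{-iy}$, so this equals $(x+iy,\ \overline{x-iy})$, as asserted. I would also note that this identification is $\Cdb$-linear and not merely real linear: multiplication by $i$ in $X_c$ carries the element labelled $x+iy$ to the one labelled $-y+ix$, while in $W$ it carries $(x+iy,\overline{x-iy})$ to $(ix-y,\ \overline{-i(x-iy)}) = (-y+ix,\ \overline{-y-ix})$, and the two results agree.

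For the $C^*$-statement, recall that for a complex $C^*$-algebra $B$ the conjugate space $\bar{B}$ is again a $C^*$-algebra — with product $\bar{a}\,\bar{b} = \overline{ab}$ and involution $(\bar{a})^* = \overline{a^*}$ — and that $\bar{a} \mapsto a^*$ is a $*$-isomorphism of $\bar{B}$ onto $B^\circ$ compatible with the matrix norms; thus $B \oplus^\infty \bar{B} = B \oplus^\infty B^\circ$ as $C^*$-algebras. Moreover a direct check shows that the conjugation $\Phi$ above is a (conjugate linear) $*$-automorphism of the $C^*$-algebra $B \oplus^\infty \bar{B}$ whose fixed point set $\kappa(B)$ is, as a real $*$-subalgebra, a real $C^*$-algebra $*$-isomorphic to $B$ via $a \mapsto (a,\bar{a})$. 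Hence $B \oplus^\infty \bar{B}$ is a $C^*$-algebra complexification of $B$. Since the $C^*$-complexification of a real $C^*$-algebra is unique (it is $B \otimes_{\Rdb} \Cdb$, see e.g.\ \cite{Li}), and since $B_c$ carries precisely this $C^*$-structure — again by uniqueness of reasonable operator space complexifications, as $B \otimes_{\Rdb} \Cdb$ is a reasonable one via $b \otimes \lambda \mapsto b \otimes \bar{\lambda}$ — we conclude that $B_c \cong B \oplus^\infty \bar{B} \cong B \oplus^\infty B^\circ$ $*$-isomorphically.

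All of this is elementary; the only step needing real care is the bookkeeping with the various conjugations — in particular the identity $i\bar{y} = \overline{-iy}$ and the verification that the identification $X_c \cong X \oplus^\infty \bar{X}$ is $\Cdb$-linear rather than merely real linear — together with the (routine) point that the $C^*$-structure forced on $B_c$ by the first part is indeed the expected one. I do not expect any genuine difficulty beyond this.
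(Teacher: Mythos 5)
Your proposal is correct and follows essentially the same route as the paper: the embedding $x \mapsto (x,\bar{x})$, the swap map $(a,\bar{c}) \mapsto (c,\bar{a})$ as the conjugate linear completely isometric period~2 automorphism with fixed point set $\kappa(X)$, Ruan's uniqueness theorem, and the identification $\bar{B} \cong B^\circ$ for the $C^*$-statement. The extra bookkeeping you supply (the $\Cdb$-linearity of the identification and the uniqueness of the $C^*$-complexification) is left implicit in the paper but is consistent with its argument.
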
 

\begin{proof}    Note that $X + i X$ becomes  $\{ (x,\bar{x})  + (iy ,-\overline{iy}) :  x , y \in X \} =  X \oplus_\infty \bar{X}$.
Also, $(x,\bar{x})  = (iy ,-\overline{iy})$ implies that $x = iy = -i y$, so that $x = y = 0$ and $X \cap (i X) = (0)$.  
The embedding $x \mapsto (x,\bar{x})$ is clearly completely isometric (by definition of $\bar{X}$ above).   Finally if we define $\theta( x + iy , \overline{x - iy}) = (x - iy , \overline{x + iy})$, then 
$\theta((a, \bar{b})) = (b ,\bar{a})$ for $a, b \in X$, which is clearly completely isometric, and the copy of $X$ is the set of fixed points of $\theta$.   So by Ruan's uniqueness theorem,
the operator space complexification of  $X$ is $X \oplus^\infty \bar{X}$.  

If $B$ is a complex $C^*$-algebra then $\bar{B}$ is a $C^*$-algebra with product 
$\bar{x} \bar{y} = \overline{xy}$, indeed $\bar{x} \mapsto (x^*)^\circ$  is a complex linear $*$-isomorphism $\bar{B} \to B^\circ$.  \end{proof}

{\bf Remark.}  We remark that in the last result  $X \oplus^\infty \bar{X}$ is real completely isometric to the set of matrices  of the form 
in equation (\ref{ofr}) with $x, y \in X$.  However the latter set is not always complex linearly completely isometric to $X_c$ (similarly to the fact mentioned in Section 2 that 
a Banach space $E$ need not be complex isometric to $\bar{E}$),  so it is not very useful for many purposes.   Note also that the statement in the lemma about $C^*$-algebras does not imply that $A \oplus A^{\circ}$ is completely isometric to $A_c$ for a (complex) operator algebra $A$, where $A^\circ$ is the opposite algebra of 2.2.8 in \cite{BLM}.

\medskip

The  basics of the theory of real operator systems are much the same as in the complex case (see  \cite{RComp,ROnr, Sharma,BT}) in many ways, most results
being proved in the same way as the complex case, or simply following from that case by complexification. 
We have already indicated some of the problematic issues with real operator systems and states in other papers (see also \cite{RComp} and \cite{BT}), such as 
the selfadjoint or positive elements elements not necessarily spanning a real operator system  ${\mathcal S}$, or they may be all
of ${\mathcal S}$.    See also comments on \cite[Section 2.1]{BRS} below.

 The facts in Section 1.4 of \cite{BLM} are valid with the same proofs.  
   See e.g.\ Proposition 2.4 and the Remark on page 104 of \cite{Sharma} for some of this.     
   
 \begin{lemma} \label{comw}  Let $X$ be a real operator space,  which is the operator space dual of 
 another real operator space $Y$.  Then  $X$  is completely isometrically isomorphic, via a homeomorphism for the weak* topologies, to a weak* 
 closed subspace of $B(H)$ for a real Hilbert space $H$.  Also, $X$ is a weak* closed real subspace of its operator space complexification $X_c \cong (Y_c)^*$, and 
 the canonical  maps $X \to X_c$ and $X_c \to X$ and $\theta_X : X_c \to X_c$
 are weak* continuous.  
 
 Conversely, any weak* closed subspace $X$ of $B(H)$, is the operator space dual of $B(H)_*/X_\perp$.  \end{lemma}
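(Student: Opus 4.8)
The plan is to reduce everything to the complex case through the identification $X_c \cong (Y_c)^*$ (which holds by the relation $(X^*)_c \cong (X_c)^*$ from \cite{RComp}, applied to $X = Y^*$), and then to descend back to the real setting. Concretely I would establish, in order: (a) that the complexification involution $\theta_X$ is weak* continuous on $X_c = (Y_c)^*$, whence $X$ is weak* closed in $X_c$ and the three canonical maps are weak* continuous; (b) that representing the complex dual operator space $X_c$ on a complex Hilbert space and then viewing that space as a real Hilbert space yields the desired concrete representation of $X$; and (c) the converse, by the bipolar theorem.

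For (a), on $X_c = (Y_c)^*$ I would consider the map $\theta$ given by $(\theta \Phi)(w) = \overline{\Phi(\theta_Y w)}$ for $\Phi \in (Y_c)^*$ and $w \in Y_c$, where $\theta_Y$ is the conjugate linear completely isometric period $2$ automorphism of $Y_c$. A direct check shows $\theta$ is conjugate linear, of period $2$, and fixes exactly those $\Phi$ that are real valued on $Y$; since any conjugate linear map fixing $X$ pointwise must send $x + iy$ to $x - iy$, this $\theta$ is forced to coincide with $\theta_X$. Now $\theta_X$ is weak* continuous: for each fixed $w$ the map $\Phi \mapsto (\theta_X \Phi)(w) = \overline{\Phi(\theta_Y w)}$ is the composite of the weak* continuous evaluation at $\theta_Y w$ with complex conjugation, hence weak* continuous. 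Consequently $X = \{ \Phi : \theta_X \Phi = \Phi \}$ is weak* closed (the preimage of $\{0\}$ under the weak* continuous map $\theta_X - \mathrm{id}$), the projection $X_c \to X$, $w \mapsto \frac12(w + \theta_X w)$, is weak* continuous, and the inclusion $X \hookrightarrow X_c$ is weak* continuous; indeed it is a weak* homeomorphism onto its image, since on functionals real on $Y$ the evaluations at $iy$ are determined by those at $y$, so the two weak* topologies on $X$ agree.

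For (b), I would apply the complex analogue of the present lemma (see \cite{BLM}) to the complex dual operator space $X_c$, obtaining a complex Hilbert space $\mathcal H$ and a completely isometric weak* homeomorphism $\Psi$ of $X_c$ onto a weak* closed subspace of $B(\mathcal H)$. Regarding $\mathcal H$ as a real Hilbert space $H$, one has $B(\mathcal H) \subseteq B(H)$ real completely isometrically (matrix norms agree), and $B(\mathcal H) = \{ T \in B(H) : TJ = JT \}$ for $J$ the real linear operator of multiplication by $i$ is real weak* closed. Restricting $\Psi$ to the real subspace $X$ gives a real completely isometric embedding into $B(H)$ whose image is weak* closed and on which the restriction is a weak* homeomorphism, using (a). The one genuinely delicate point --- and the main obstacle --- is that the relative weak* topology that $B(\mathcal H)$ inherits from $B(H)$ coincides with its own complex weak* topology: I would verify this from the trace identity $\mathrm{Tr}_{\Rdb}(RT) = 2\,\mathrm{Re}\,\mathrm{Tr}_{\Cdb}(R'T)$ for complex linear $T$, where $R' = \frac12(R - JRJ)$ is the complex linear part of a real trace class operator $R$, so that both topologies are generated by the functionals $T \mapsto \mathrm{Re}\,\mathrm{Tr}_{\Cdb}(ST)$ with $S$ complex trace class.

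For the converse, with $X$ a weak* closed subspace of $B(H) = (B(H)_*)^*$, I would form $X_\perp \subseteq B(H)_*$. The standard operator space duality $(B(H)_*/X_\perp)^* \cong (X_\perp)^\perp$ completely isometrically --- valid in the real case since the facts of Section 1.4 of \cite{BLM} carry over with the same proofs --- together with the bipolar theorem $(X_\perp)^\perp = X$ (as $X$ is weak* closed), identifies $X$ completely isometrically with the operator space dual of $B(H)_*/X_\perp$. This last step is routine.
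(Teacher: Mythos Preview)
Your argument is correct, but it follows a genuinely different route from the paper's. The paper first obtains the concrete real representation $X \hookrightarrow B(H)$ directly, by observing that the proof of \cite[Lemmas 1.4.6 and 1.4.7]{BLM} goes through verbatim in the real case; only \emph{afterwards} does it complexify, realizing $B(H)_c$ inside $M_2(B(H)) \cong B(H^{(2)})$ via the matrix picture (\ref{ofr}), from which the weak* continuity of $X \to X_c$, $X_c \to X$, and $\theta_X$ is read off concretely (since $S_t + iT_t \to S + iT$ iff $S_t \to S$ and $T_t \to T$ weak*). By contrast, you go abstract first --- proving weak* continuity of $\theta_X$ on $(Y_c)^*$ via the dual formula $(\theta_X \Phi)(w) = \overline{\Phi(\theta_Y w)}$ --- and only then invoke the \emph{complex} representation theorem for $X_c$, finally passing to the underlying real Hilbert space. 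Your approach has the virtue of reducing cleanly to the complex case without redoing the proof of 1.4.6, at the cost of the trace computation showing that the complex and relative real weak* topologies on $B(\mathcal{H}) \subset B(H)$ agree (this is correct: the key point is that $\mathrm{Tr}_{\Rdb}$ vanishes on conjugate-linear trace class operators, and that the complex-linear part $R'$ of a real trace class $R$ is complex trace class). The paper's $M_2$ picture sidesteps that computation entirely. Both approaches are sound; yours is perhaps more in the spirit of ``prove by complexification,'' while the paper's is more hands-on.
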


  \begin{proof}  The first  and last statement follows most quickly perhaps from the proof of
  \cite[Lemmas 1.4.6 and  1.4.7]{BLM}.   If $X$ is a  weak* closed subspace $X$ of $B(H)$ then in
  the setup described   early in Section 2 of 
   \cite{BCK}, $B(H)_c$ may be viewed as a weak* closed real subspace of $M_2(B(H)) \cong B(H^{(2)}) \cong B_{\Rdb}(H_c)$, and the induced embedding $B(H)_c \hookrightarrow B_{\Rdb}(H_c)$ has range inside $B_{\Cdb}(H_c)$, and coincides with 
 the canonical map $B(H)_c \to B_{\Cdb}(H_c)$.   The latter range is weak* closed.
 The isomorphism  $B(H^{(2)}) \cong B_{\Rdb}(H_c)$ is a weak* homeomorphism.  It follows that 
 the canonical  maps $B(H) \to B(H)_c$ and $B(H)_c \to B(H)$ (`real' and `imaginary' part) are weak* continuous. 
 Indeed $S_t + i T_t \to S + i T$ weak* iff $S_t \to S$ weak* and $T_t \to T$ weak*. 
 It follows that 
 we have $X_c$ weak* closed in $B(H)_c \cong B(H_c)$, and 
   the canonical  maps $X \to X_c$ and $X_c \to X$ are weak* continuous, as thus is $\theta_X$ on $X_c$.  
   Thus $X$  is a weak* closed real subspace of its complexification. 
   
   If $X$ has a real operator space predual $Y$ then $X_c$ has  operator space predual $Y_c$ of course.
   By the net convergence formulation above, the associated weak* topology on $X_c$
   coincides with the one inherited from $B(H_c)$ above if the  embedding $X \subset B(H)$ is weak* continuous with respect to $\sigma(X,Y)$,
   such as the one in the first statement.    \end{proof} 
   
   A real $C^*$-algebra is representable as a real von Neumann algebra if and only if it has a Banach space predual
 \cite{IP}.   
 Such Banach space predual is unique by 6.2.5 in the same text. 
   By the last part of \cite[Section 5.5]{Li} the bidual of a real $C^*$-algebra $A$ is a real $W^*$-algebra, and 
 $(A_c)^{**} \cong (A^{**})_c$.   Indeed the
 theory of the real bidual of an operator space or operator algebra  is very similar to the complex variant (e.g.\ in \cite[Sections 1.4 and 2.5]{BLM}).
 
 \begin{lemma} \label{oone} The operator space 1-direct sum $W = \oplus^1_\alpha \, (X_\alpha)_c$ is the reasonable complexification of  $V = \oplus^1_\alpha \, X_\alpha$.   
 \end{lemma}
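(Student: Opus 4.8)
The plan is to invoke Ruan's uniqueness theorem for reasonable complexifications: it suffices to exhibit a conjugate linear completely isometric period $2$ automorphism of $W = \oplus^1_\alpha (X_\alpha)_c$ whose fixed point set is (a copy of) $V = \oplus^1_\alpha X_\alpha$. First I would observe that, as with the already-noted fact that the operator space complexification commutes with $c_0$ and $\ell^\infty$ sums, the $1$-sum is an operator space construction to which one may apply the individual complexification maps coordinatewise. Concretely, each $(X_\alpha)_c$ carries its conjugate linear period $2$ complete isometry $\theta_{X_\alpha}$ with fixed points $X_\alpha$, and one forms $\Theta = \oplus_\alpha \theta_{X_\alpha}$ acting on $W$ by $\Theta((w_\alpha)) = (\theta_{X_\alpha}(w_\alpha))$.

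The key steps, in order: (1) check that $\Theta$ is well defined on $W$ and conjugate linear --- immediate since each $\theta_{X_\alpha}$ is conjugate linear and norm preserving, so the $1$-norm $\sum_\alpha \|w_\alpha\|$ is unchanged; (2) check $\Theta^2 = I$ --- immediate since each $\theta_{X_\alpha}^2 = I$; (3) check that $\Theta$ is a complete isometry --- this uses that the amplification $\Theta^{(n)}$ on $M_n(W)$ is, under the canonical identification $M_n(\oplus^1_\alpha (X_\alpha)_c)$ with the corresponding matricial $1$-sum, again a coordinatewise direct sum of the $\theta_{X_\alpha}^{(n)}$, each of which is isometric on $M_n((X_\alpha)_c)$; (4) identify the fixed point set of $\Theta$ as $\{(w_\alpha) \in W : \theta_{X_\alpha}(w_\alpha) = w_\alpha \ \forall \alpha\} = \oplus^1_\alpha X_\alpha = V$, using that the fixed points of $\theta_{X_\alpha}$ are exactly $X_\alpha$; and (5) conclude by the converse half of Ruan's characterization quoted in the introduction (\cite[Theorem 3.2]{RComp}) that the inclusion $V \hookrightarrow W$ realizes $W$ as the reasonable operator space complexification of $V$. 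One should also note in passing that $W$ is a genuine complexification in the algebraic sense, i.e.\ $W = V \oplus iV$ as a real vector space, which again follows coordinatewise from $(X_\alpha)_c = X_\alpha \oplus iX_\alpha$.

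The only point requiring a little care --- and the step I would expect to be the main obstacle, though a mild one --- is the compatibility of the amplification with the direct sum structure in step (3): one must be sure that $M_n(\oplus^1_\alpha Z_\alpha)$ is computed so that $\|[z^{\alpha}_{ij}]\|$ is governed by the $1$-sum of the matrix norms $\|[z^\alpha_{ij}]\|_{M_n(Z_\alpha)}$ in the appropriate sense (this is the definition of the operator space $1$-sum, cf.\ the treatment in \cite{BLM}), so that a coordinatewise complete isometry on the summands induces a complete isometry on the sum. Granting the real case of that definitional fact --- which holds by the same argument as the complex case, exactly as the excerpt notes for $c_0$ and $\ell^\infty$ sums --- the proof is then essentially a bookkeeping verification with no genuine analytic content.
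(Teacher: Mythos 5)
There is a genuine gap, and it sits exactly where you located the ``only point requiring a little care'': the operator space $1$-sum does \emph{not} satisfy $M_n(\oplus^1_\alpha Z_\alpha) \cong \oplus^1_\alpha M_n(Z_\alpha)$. That coordinatewise description of the matrix norms is the defining property of the $\infty$-sum; the $1$-sum is defined by a universal property (complete contractions out of $\oplus^1_\alpha Z_\alpha$ correspond to uniformly completely contractive families out of the $Z_\alpha$), or equivalently as a subspace of $(\oplus^\infty_\alpha Z_\alpha^*)^*$, and its matrix norms are in general strictly smaller than the $\ell^1$-sums of the coordinate matrix norms (already for $\Cdb \oplus^1 \Cdb = {\rm MAX}(\ell^1_2)$ with $z^1 = e_{11}$, $z^2 = e_{22}$ in $M_2$ one gets norm $1$, not $2$). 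So your step (3) as written does not establish that $\Theta$ is a complete isometry. The correct route is through the universal property: each $\theta_{X_\alpha}$ composed with the inclusion into $W$ gives a uniformly completely contractive family, hence induces a complete contraction on $W$; but since the universal property is stated for linear maps and the $\theta_{X_\alpha}$ are conjugate linear, one must first pass to the conjugate operator space $\bar{W}$, check that $\bar{W}$ has the universal property of $\oplus^1_\alpha \overline{(X_\alpha)_c}$, and produce complete contractions $u : W \to \bar{W}$ and $v : \bar{W} \to W$ with $vu = I_W$. This is precisely what the paper's proof does, and it is where the actual content lies.

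A second, related omission: Ruan's criterion (\cite[Theorem 3.2]{RComp}) as quoted requires the map $\kappa : V \to W$ to be a complete isometry \emph{before} you can invoke the fixed-point characterization. Your step (4) identifies the fixed-point set of $\Theta$ with $V$ only as a set (or real vector space); it does not show that the intrinsic matrix norms of the real $1$-sum $V = \oplus^1_\alpha X_\alpha$ agree with those inherited from $W$. Again this cannot be read off coordinatewise. The paper handles it by a retraction argument: the universal property of the real $1$-sum gives a complete contraction $\kappa : V \to W$, and the universal property of the complex $1$-sum applied to the complexified inclusions $(i_\alpha)_c : (X_\alpha)_c \to V_c$ gives a complete contraction $r : W \to V_c$ with $r \circ \kappa$ equal to the canonical complete isometry $V \to V_c$, whence $\kappa$ is a complete isometry. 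Both of your gaps have the same source --- treating the $1$-sum as if it behaved like the $\infty$-sum under amplification --- and both are repaired by working with the universal property rather than with explicit matrix-norm formulas.
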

 
   \begin{proof}   
     The  inclusion $X_\alpha \to W$ induces a complete contraction $\kappa : V \to W$, by the real variant of
 1.3.14 in \cite{BLM}.   The universal property of the 1-sum applied to the complexification of the inclusion $i_\alpha :  X_\alpha \to V$
 gives a complete contraction $r : W \to V_c$ with $r \circ \kappa = I_V$.  So 
  $\kappa$ is a complete isometry.   It is not hard to show that  $\bar{W}$ has the universal property of $\oplus^1_\alpha \, \overline{(X_\alpha)_c}$,
 so that these operator spaces may be identified.
 The composition of  $\theta_{X_\alpha}$ with the canonical map $(X_\alpha)_c \to W \to \bar{W}$ 
 induces  a linear complete contraction $u : W \to \bar{W}$.  Similarly we obtain a map  $v: \bar{W} \to W$,
with $v u = I_W$.   Then $\overline{u(\cdot)}$ is a period 2 conjugate linear 
 complete isometric surjection $\theta : W \to W$ fixing Ran$(\kappa)$.
 Thus $W$ is the reasonable complexification.  \end{proof}

 Basic aspects of the  real version of the theory of operator space tensor products are stated in Ruan's paper \cite{ROnr}.  We 
 will amplify on his very terse remarks, for example discussing  
  functoriality of the complexification for the three tensor products in Section 1.5  in  \cite[Section 1.5]{BLM}: the minimal, projective, and Haagerup operator space tensor product.
 The operator space projective tensor product  is designed to linearize real bilinear maps that are {\em jointly completely bounded} in  the sense of
 1.5.11 of \cite{BLM}.   One can see in several ways that a `jointly completely contractive' real  bilinear map 
$u : X \times Y \to Z$ of real operator spaces extends uniquely to a jointly completely contractive complex bilinear map 
$u : X_c \times Y_c \to Z_c$ (e.g.\ this follows from the next lemma).  As in the complex case $(E \overset{\frown}{\otimes} F)^* \cong
CB(E,Y^*)$ completely isometrically (see the last page of \cite{ROnr} and   (1.51) in \cite{BLM}). 
 
The Haagerup tensor product of real operator spaces is defined  e.g.\ as in 1.5.4 in \cite{BLM} so as  to have the universal property
of linearizing bilinear maps that are completely bounded in  the sense of
Christensen and Sinclair (called simply  completely bounded in e.g.\ 1.5.4 in \cite{BLM}).   As in the complex case, if $X_i \subset Y_i$ completely isometrically then 
$X_1 \otimes_{\rm h} X_2 \subset Y_1 \otimes_{\rm h} Y_2$ completely isometrically (this is the  `injectivity' of the 
Haagerup tensor product).  There are several known proof strategies for this, but it also follows from the 
complex case and the complete isometry beginning the third paragraph of the next proof.     The proof of the CSPS theorem in \cite[Theorem 1.5.7]{BLM} is 
just as in the complex case.   
  The other results in \cite[Section 1.5]{BLM}, e.g.\ the `selfduality' of  the Haagerup tensor product, are also 
 just as in the complex case.

\begin{lemma} \label{hts}  For real operator spaces $X$  and $Y$ we have $(X \otimes_{\rm min} Y)_c \cong X_c \otimes_{\rm min} Y_c$ and  $(X \overset{\frown}{\otimes} Y)_c \cong X_c \overset{\frown}{\otimes} Y_c$ completely isometrically. 
  Similarly  $(X  \otimes_{\rm h} Y)_c \cong X_c  \otimes_{\rm h} Y_c$ completely isometrically. 
  In particular, a completely contractive (in the sense of
Christensen and Sinclair)  real  bilinear map 
$u : X \times Y \to Z$ of real operator spaces extends uniquely to a completely contractive (in same sense) complex bilinear map 
$u : X_c \times Y_c \to Z_c$.  
\end{lemma}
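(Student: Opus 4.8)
The plan is to prove the three complexification identities in sequence, each time producing the conjugate-linear period-2 completely isometric automorphism on the candidate complexification and invoking Ruan's characterization (the result of \cite[Theorem 3.2]{RComp} recalled in the introduction). For the minimal tensor product: realize $X \subset B(H)$ and $Y \subset B(K)$ completely isometrically for real Hilbert spaces, so that $X \otimes_{\rm min} Y \subset B(H \otimes K)$. Then $X_c \otimes_{\rm min} Y_c \subset B(H_c) \overline{\otimes} B(K_c) = B(H_c \otimes K_c)$, and one checks that $(H \otimes K)_c \cong H_c \otimes K_c$ as complex Hilbert spaces, so $B(H \otimes K)_c \cong B(H_c \otimes K_c)$, and under this identification $X \otimes_{\rm min} Y$ sits inside $X_c \otimes_{\rm min} Y_c$ as the fixed points of $\theta_X \otimes \theta_Y$. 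First I would verify that $\theta_X \otimes \theta_Y$ is well defined, conjugate linear, period 2, and completely isometric on $X_c \otimes_{\rm min} Y_c$: the conjugate-linearity and period-2 properties are formal; the complete isometry follows because $\theta_X \otimes \theta_Y$ is (up to the identification with $\bar{X}_c \otimes_{\rm min} \bar{Y}_c$) a tensor of complete isometries, using that $\overline{A \otimes_{\rm min} B} \cong \bar{A} \otimes_{\rm min} \bar{B}$ in the complex theory. Its fixed point set is exactly $\spn_{\Rdb}\{x \otimes y : x \in X, y \in Y\}$ closure $= X \otimes_{\rm min} Y$, so Ruan's theorem gives $(X \otimes_{\rm min} Y)_c \cong X_c \otimes_{\rm min} Y_c$.

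For the projective tensor product $\overset{\frown}{\otimes}$ I would use the duality $(E \overset{\frown}{\otimes} F)^* \cong CB(E, F^*)$ cited just above the lemma, together with the facts $(X^*)_c \cong (X_c)^*$ (Ruan) and $CB(X,Y)_c \cong CB(X_c, Y_c)$ (from \cite[Section 2]{BCK}, recalled in the introduction). Thus $((X \overset{\frown}{\otimes} Y)_c)^* \cong (X \overset{\frown}{\otimes} Y)^{**}_c$-free identification: more directly, $((X \overset{\frown}{\otimes} Y)^*)_c \cong CB(X, Y^*)_c \cong CB(X_c, (Y^*)_c) \cong CB(X_c, (Y_c)^*) \cong (X_c \overset{\frown}{\otimes} Y_c)^*$. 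This shows the complexification of $X \overset{\frown}{\otimes} Y$ has the right dual; to pass from an isometric identification of duals to a completely isometric identification of the spaces themselves one checks that the natural bilinear map $X \times Y \to X_c \overset{\frown}{\otimes} Y_c$ is jointly completely contractive (hence induces a complete contraction $X \overset{\frown}{\otimes} Y \to X_c \overset{\frown}{\otimes} Y_c$ whose complexification $(X \overset{\frown}{\otimes} Y)_c \to X_c \overset{\frown}{\otimes} Y_c$ is a complete contraction), and that this map is a complete isometry onto the fixed points of $\theta_X \overset{\frown}{\otimes} \theta_Y$; the dual computation above certifies isometry, and a matricial version certifies complete isometry. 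Alternatively, and more cleanly, I would just repeat the $\theta_X \otimes \theta_Y$ argument directly: the projective tensor norm is concretely described via amplifications and there is a conjugate map $\bar{(X \overset{\frown}{\otimes} Y)} \cong \bar{X} \overset{\frown}{\otimes} \bar{Y}$ in the complex theory, so $\theta_X \overset{\frown}{\otimes} \theta_Y$ is a conjugate-linear period-2 complete isometry of $X_c \overset{\frown}{\otimes} Y_c$ whose fixed points are $X \overset{\frown}{\otimes} Y$, and Ruan applies.

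For the Haagerup tensor product the cleanest route is again via the conjugate: in the complex theory $\overline{X \otimes_{\rm h} Y} \cong \bar{X} \otimes_{\rm h} \bar{Y}$ completely isometrically, so the map $\theta_X \otimes_{\rm h} \theta_Y$ (sending $\sum x_k \otimes y_k$, $x_k \in X_c$, $y_k \in Y_c$, to $\sum \theta_X(x_k) \otimes \theta_Y(y_k)$) is a well-defined conjugate-linear period-2 completely isometric automorphism of $X_c \otimes_{\rm h} Y_c$. Its fixed point set is the closed real span of $\{x \otimes y : x \in X, y \in Y\}$, which one identifies with $X \otimes_{\rm h} Y$: the inclusion $X \otimes_{\rm h} Y \to X_c \otimes_{\rm h} Y_c$ is completely isometric by injectivity of the Haagerup tensor product (cited just above the lemma) applied to $X \subset X_c$ and $Y \subset Y_c$, reading $X_c \otimes_{\rm h} Y_c$ as a space containing $X \otimes_{\rm h} Y$. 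Hence by Ruan's characterization $(X \otimes_{\rm h} Y)_c \cong X_c \otimes_{\rm h} Y_c$. Finally, the ``in particular'' statement is immediate: a completely contractive (Christensen--Sinclair sense) real bilinear $u : X \times Y \to Z$ linearizes to a complete contraction $\tilde{u} : X \otimes_{\rm h} Y \to Z$; complexify to $\tilde{u}_c : (X \otimes_{\rm h} Y)_c \cong X_c \otimes_{\rm h} Y_c \to Z_c$, a complete contraction by Proposition \ref{cbcmpx}, and the associated complex bilinear map $X_c \times Y_c \to Z_c$ is the required extension; uniqueness follows because such a map is determined by its values on $X \times Y$ by complex bilinearity.

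The main obstacle I anticipate is not any single deep fact but the bookkeeping around the conjugate operator space: one must be careful that the ``conjugate of a tensor product is the tensor product of conjugates'' holds for each of the three tensor norms (this is standard in the complex theory but should be pinned down), and that the map $\theta_X \otimes \theta_Y$ really is completely isometric rather than merely contractive --- this is where one genuinely uses that it factors as (identity-type map) composed with $\bar{\theta_X} \otimes \bar{\theta_Y}$ between honest complex operator spaces, so that the complex-theory functoriality of $\otimes_{\rm min}$, $\overset{\frown}{\otimes}$, $\otimes_{\rm h}$ under complete isometries applies. Once those two points are secured, each of the three cases is a one-line application of Ruan's uniqueness theorem.
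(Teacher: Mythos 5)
Your treatments of the minimal and projective tensor products are fine: the projective case is essentially the paper's own duality computation through $CB(X,Y^*)_c \cong CB(X_c,(Y_c)^*)$, and your spatial fixed-point argument for $\otimes_{\rm min}$ is a valid alternative to the paper's one-line embedding $(X\otimes_{\rm min}Y)_c \subset CB(Y^*,X)_c = CB((Y_c)^*,X_c)$. The problem is in the Haagerup case, at exactly the step that carries the real content of the lemma.

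You justify the complete isometry of the canonical map $X\otimes_{\rm h}Y \to X_c\otimes_{\rm h}Y_c$ by ``injectivity of the Haagerup tensor product applied to $X\subset X_c$ and $Y\subset Y_c$.'' Injectivity of the \emph{real} Haagerup tensor product applied to those inclusions gives a complete isometry of $X\otimes_{\rm h}Y$ into the real Haagerup tensor product of the underlying real operator spaces of $X_c$ and $Y_c$ --- not into the complex Haagerup tensor product $X_c\otimes_{\rm h}Y_c$. The canonical map from the former to the latter is a complete contraction but is far from injective (it kills $\iota v\otimes w - v\otimes \iota w$), so composing gives only the upper estimate you already had from the universal property; it gives no lower bound on norms in $X_c\otimes_{\rm h}Y_c$. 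Worse, the paper notes that injectivity of the real Haagerup tensor product can itself be \emph{deduced} from this very complete isometry, so leaning on it here risks circularity. The missing ingredient is the paper's direct argument: given a Christensen--Sinclair completely contractive real bilinear $u:X\times Y\to Z$, its complex bilinear extension $u^c$ is again completely contractive, proved by transporting everything through the $2\times 2$ matrix picture of the complexification (the identity $\theta_n(u_n^c(a,b)) = u_{2n}(\theta_n(a),\theta_n(b))$). That extension result is what forces the Haagerup norm of an element of $X\otimes Y$ computed in $X_c\otimes_{\rm h}Y_c$ to dominate its real Haagerup norm, and it is also the ``in particular'' assertion --- which you instead try to derive as a consequence of the isomorphism, so your ordering cannot be repaired without inserting essentially this computation. (Your $\theta_X\otimes\theta_Y$ automorphism and the identification of its fixed points with the closed real span of $X\otimes Y$ are correct; once the complete isometry is genuinely established, Ruan's uniqueness theorem finishes the argument as you say, though the paper instead closes the loop by a finite-dimensional reduction and selfduality.)
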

 
   \begin{proof}     If $\otimes_{\rm min}$ is the uncompleted minimal operator space tensor product we have $$(X \otimes_{\rm min} Y)_c \subset CB(Y^*,X)_c = CB((Y^*)_c,X_c) = CB((Y_c)^* , X_c).$$ 
Thus the algebraic identification of $(X \otimes_{\rm min} Y)_c$ and $X_c \otimes_{\rm min} Y_c$  is a complete isometry.   
A similar principle shows that $(X \overset{\frown}{\otimes} Y)_c \cong X_c \overset{\frown}{\otimes} Y_c$ completely isometrically.  Indeed 
$$(X_c \overset{\frown}{\otimes} Y_c)^* \cong CB(X_c, (Y_c)^*) \cong CB(X_c, (Y^*)_c) \cong CB(X,Y^*)_c 
\cong ((X \overset{\frown}{\otimes} Y)^*)_c$$ completely isometrically, so that 
the algebraic identification of (the uncompleted) operator space projective tensor products  $X_c \overset{\frown}{\otimes} Y_c$ and 
$(X \overset{\frown}{\otimes} Y)_c$ is a complete isometry.   

Let $u  : X \times Y \to Z$ be a completely contractive (in the sense of
Christensen and Sinclair)  real  bilinear map 
and let $u^c : X_c \times Y_c \to Z_c$ be the unique complex bilinear extension.  Let $\theta : Z_c \to M_2(Z)$ be the map taking $x + i y$ to the matrix
in (\ref{ofr}).
For $n \in \Ndb$ and $a \in M_n(X_c), b \in M_n(Y_c)$ the reader can check that 
$\theta_n(u_n^c(a, b)) = u_{2n}(\theta_n(a), \theta_n(b))$.   Hence 
$$\| u_n^c(a, b) \| = \| \theta_n(u_n^c(a, b)) \| = \| u_{2n}(\theta_n(a), \theta_n(b)) \| \leq \| \theta_n(a) \| \, \| \theta_n(b)) \| = \| a \| \, \| b \| .$$
Thus $u^c$ is  completely contractive.  

It is easy to deduce from the latter and the universal property
that the canonical map $X  \otimes_{\rm h} Y \to X_c  \otimes_{\rm h} Y_c$ is a complete isometry. 
To show that $(X  \otimes_{\rm h} Y)_c \cong X_c  \otimes_{\rm h} Y_c$ completely isometrically it suffices by the `injectivity' of the 
Haagerup tensor product, and of $X \mapsto X_c$, to assume that $X$ and $Y$ are finite dimensional.   Clearly $\otimes : X \times Y \to 
X  \otimes_{\rm h} Y$ is completely contractive, so by facts above we obtain a completely contractive map
$X_c \times Y_c \to (X  \otimes_{\rm h} Y)_c$, and we see that the canonical map
$X_c  \otimes_{\rm h} Y_c \to (X  \otimes_{\rm h} Y)_c$  is completely contractive.   
Hence the canonical map
$X^*_c  \otimes_{\rm h} Y^*_c \to (X^* \otimes_{\rm h} Y^*)_c$  is completely contractive.    Dualizing this we get a 
complete contraction  $$(X^* \otimes_{\rm h} Y^*)_c^* \to (X^*_c  \otimes_{\rm h} Y^*_c)^* \cong X_c  \otimes_{\rm h} Y_c,$$ the latter
by the `selfduality' above the lemma.  Also by this `selfduality' we have  $X \otimes_{\rm h} Y \cong (X^* \otimes_{\rm h} Y^*)^*$. Complexifying this
last  isomorphism, and composing with the map in the last displayed equation,
one may see that the  canonical map $(X \otimes_{\rm h} Y)_c \to X_c  \otimes_{\rm h} Y_c$ is a complete contraction.
Thus  $(X  \otimes_{\rm h} Y)_c \cong X_c  \otimes_{\rm h} Y_c$ completely isometrically. 
   \end{proof}  
   
Nearly all of the real version of the results in \cite[Section 1.6]{BLM} are valid by the same arguments.  
We end this Section discussing the real versions of a couple of  results there that are not so clear.  

If $X, Y$ are weak* closed real subspaces of $B(H)$ and $B(K)$ we define the normal spatial tensor product $X \bar{\otimes} Y$ to be the weak* closure in $B(H \otimes K)$ of the span of 
the operators $x \otimes y$ for $x \in X, y \in Y$.  Define the  {\em normal Fubini tensor product} 
$X \otimes_{\mathcal F} Y$ to be the
subspace of $B(H \otimes K)$ of elements whose `left slices' are in $X$ and `right slices' are in $Y$ (see bottom p.\ 134 in \cite{ER}).

\begin{lemma} \label{Fub}    If $X$ and $Y$ are weak* closed real subspaces of $B(H)$ and $B(K)$ then there is a canonical weak* continuous completely isometric isomorphism 
$$(X_* \overset{\frown}{\otimes} Y_*)^* \; \cong \; X \otimes_{\mathcal F} Y,$$ 
which carries the weak* closure of the canonical copy of $X \otimes Y$  in $(X_* \overset{\frown}{\otimes} Y_*)^*$
onto the normal spatial tensor product $X \bar{\otimes} Y$.  

In addition, $X_c \bar{\otimes} Y_c$ is a reasonable complexification of  $X \bar{\otimes} Y$;
indeed   $X \bar{\otimes} Y$ may be identified with the fixed points of the period 2 automorphism 
$\theta_X \otimes \theta_Y$ on $X_c \bar{\otimes} Y_c$. 
 \end{lemma}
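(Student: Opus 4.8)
The proof falls into two essentially independent parts: the identification of the normal Fubini tensor product with $(X_* \overset{\frown}{\otimes} Y_*)^*$ together with the image of the normal spatial tensor product, and then the complexification statement. For the first part, the plan is to mimic the complex argument (as in \cite{ER} or \cite[Section 1.6]{BLM}), which I expect goes through verbatim in the real case since it rests only on the Hahn--Banach theorem, weak* duality, and the identity $(E \overset{\frown}{\otimes} F)^* \cong CB(E, F^*)$ for real operator spaces, which was recorded earlier in the excerpt (in the discussion of Section 1.5 of \cite{BLM}, referencing \cite{ROnr}). The key steps there: use $(X_* \overset{\frown}{\otimes} Y_*)^* \cong CB(X_*, (Y_*)^*) = CB(X_*, Y)$, realize a completely bounded map $X_* \to Y$ via slicing against elements of $B(H \otimes K)$ whose left slices land in $X$ and right slices in $Y$, check this is a weak* homeomorphic complete isometry onto $X \otimes_{\mathcal F} Y$, and verify that the weak* closure of $X \otimes Y$ sits inside $B(H \otimes K)$ and agrees with the span-closure defining $X \bar{\otimes} Y$. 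None of these steps uses positivity, selfadjointness, or polarization, so the real case should be immediate; I would simply say so and cite the complex proof.

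For the complexification statement, the natural strategy is to invoke the converse half of Ruan's characterization quoted in the introduction: to show $X_c \bar{\otimes} Y_c$ is a reasonable operator space complexification of $X \bar{\otimes} Y$, it suffices to produce a conjugate linear completely isometric period 2 automorphism of $X_c \bar{\otimes} Y_c$ whose fixed point set is (the canonical copy of) $X \bar{\otimes} Y$. The obvious candidate is $\theta_X \otimes \theta_Y$, i.e.\ the weak* continuous extension of $x + iy \otimes z + iw \mapsto \theta_X(x+iy) \otimes \theta_Y(z+iw)$. First I would note that $\theta_X$ and $\theta_Y$ are conjugate linear completely isometric weak* continuous period 2 automorphisms of $X_c$ and $Y_c$ (conjugate linearity and the period 2 property are by definition; weak* continuity of $\theta_X$ on $X_c = (X_*)_c^*$ was established in Lemma \ref{comw}; complete isometry is part of the definition of reasonableness, available because $X_c, Y_c$ are the reasonable complexifications of $X, Y$ by Ruan). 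Then $\theta_X \otimes \theta_Y$ is a conjugate linear weak* continuous period 2 automorphism of $B(H_c \otimes K_c)$ restricting to $X_c \bar{\otimes} Y_c$ — here one uses that $(H \otimes K)_c \cong H_c \otimes_{\Cdb} K_c$ as complex Hilbert spaces and that a spatial tensor product of conjugate linear isometries is again one — and it is completely isometric on $X_c \bar{\otimes} Y_c$ because it is a $*$-preserving conjugate linear map that is isometric, hence completely isometric on the containing $C^*$-algebra, or alternatively because it is a restriction of $\theta_{B(H \otimes K)}$ under the identification $B(H \otimes K)_c \cong B(H_c \otimes K_c)$.

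The only genuine content, and the step I expect to be the main obstacle, is identifying the fixed point set of $\theta_X \otimes \theta_Y$ on $X_c \bar{\otimes} Y_c$ with $X \bar{\otimes} Y$ — one inclusion is clear ($X \bar{\otimes} Y$ is spanned by $x \otimes y$ with $x \in X, y \in Y$, and these are fixed), but the reverse inclusion requires care because a weak* limit of fixed elementary tensors need not visibly lie in the span-closure of $X \otimes Y$. The clean way around this is to use the first part of the lemma: under the identification $X_c \bar{\otimes} Y_c \subset X_c \otimes_{\mathcal F} Y_c \cong ((X_*)_c \overset{\frown}{\otimes} (Y_*)_c)^* \cong ((X_* \overset{\frown}{\otimes} Y_*)_c)^* \cong ((X_* \overset{\frown}{\otimes} Y_*)^*)_c$ (using Lemma \ref{hts} for the complexification of the projective tensor product and $(Z^*)_c \cong (Z_c)^*$), the automorphism $\theta_X \otimes \theta_Y$ corresponds to the standard $\theta_W$ on $W_c$ for $W = X \otimes_{\mathcal F} Y$; checking this amounts to chasing the definitions of the slice maps through the duality, using that $\theta_{X_*}$ and $\theta_{Y_*}$ are the preadjoints of $\theta_X, \theta_Y$. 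Its fixed points are then exactly $X \otimes_{\mathcal F} Y$ by reasonableness of $W_c$, and restricting to the weak* closure of $X \otimes Y$ on both sides (which is $\theta_X \otimes \theta_Y$-invariant, being weak* continuous) gives the fixed points inside $X_c \bar{\otimes} Y_c$ as $X \bar{\otimes} Y$. I would present this as the core of the argument and keep the rest to a sentence each.
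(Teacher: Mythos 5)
Your treatment of the first assertion (cite the complex proof of \cite[Theorem 7.2.3]{ER} and note it uses nothing real-sensitive) is exactly what the paper does, and your setup for the second assertion --- realizing $\theta_X \otimes \theta_Y$ as the restriction of $\theta_{B(H)} \otimes \theta_{B(K)}$ under $B(H\otimes K)_c \cong B(H_c \otimes K_c)$, hence weak* continuous, conjugate linear, period $2$ and completely isometric, and then invoking the fixed-point characterization of reasonable complexifications --- also matches the paper. The divergence is in the step you yourself flag as the crux: identifying the fixed-point set of $\theta_X \otimes \theta_Y$ on $X_c \bar{\otimes} Y_c$ with $X \bar{\otimes} Y$.

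Here your detour through the Fubini tensor product does not close the gap; it only relocates it. Knowing that $\theta_X \otimes \theta_Y$ corresponds to $\theta_W$ on $W_c$ for $W = X \otimes_{\mathcal F} Y$ tells you that a fixed element $z \in X_c \bar{\otimes} Y_c$ lies in $(X \otimes_{\mathcal F} Y) \cap (X_c \bar{\otimes} Y_c)$, but $X \otimes_{\mathcal F} Y$ can be strictly larger than $X \bar{\otimes} Y$ (this is precisely why the first assertion of the lemma distinguishes the two), so this does not yet place $z$ in the weak* closure of the real span of $X \otimes Y$. The phrase ``restricting to the weak* closure of $X \otimes Y$ on both sides'' asserts the needed equality rather than proving it. The missing ingredient is an averaging-plus-density argument: set $Q(z) = \frac{1}{2}\bigl(z + (\theta_X \otimes \theta_Y)(z)\bigr)$, a weak* continuous idempotent whose range is the fixed-point set. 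A direct computation shows $Q\bigl((x+iy)\otimes(x'+iy')\bigr) = x \otimes x' - y \otimes y' \in X \otimes Y$, so by weak* continuity and weak* density of the span of such elementary tensors in $X_c \bar{\otimes} Y_c$, the range of $Q$ lands in the (weak* closed) copy of $X \bar{\otimes} Y$; the reverse inclusion is the easy one. This is exactly the paper's argument, carried out directly on $X_c \bar{\otimes} Y_c$ with no appeal to the Fubini product at all, and it is shorter than your proposed route. Your proof is repairable in two lines, but as written the decisive inclusion is not established.
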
 

 \begin{proof}   The proof of the complex version of the first assertion, which is Theorem 7.2.3  in \cite{ER}, works in the real case.
  
 Consider the canonical map
 $$X \bar{\otimes} Y \; \overset{\kappa}{\rightarrow} \; X_c \bar{\otimes} Y_c.$$
 Here $\kappa(x \otimes y) = x \otimes y$ for $x \in X, y \in Y$.
 Note that  $\kappa$ is weak* continuous and  complete isometric since it is a restriction of the canonical map
$$B(H) \bar{\otimes} B(K)  \; = \; B(H \otimes K) \; \overset{\kappa}{\rightarrow} \; B(H_c) \bar{\otimes} B(K_c)   \; = \; B(H_c \otimes K_c) .$$
(One may use Lemma \ref{comw}  here and below too if desired.)
Note also that $\theta_X \otimes \theta_Y$ is the  weak* continuous
restriction of $\theta_{B(H)} \otimes \theta_{B(K)}$ to $X_c \bar{\otimes} Y_c$, and is thus easily seen to be a well defined period 2 automorphism 
on $X_c \bar{\otimes} Y_c$.  
Let $$Q(z) = \frac{1}{2}(z  + (\theta_X \otimes \theta_Y)(z)) , \qquad z \in X_c \bar{\otimes} Y_c .$$ This is a weak* continuous 
idempotent,
and Ran$(Q)$ is the set of fixed points of $\theta_X \otimes \theta_Y$. 
It is easy to check that $Q \circ \kappa = \kappa$ first on elementary tensors, and then on $X \bar{\otimes} Y$ by continuity and density.
On the other hand $Q((x+iy) \otimes (x' + i y'))$ is easily seen to be in Ran$(\kappa)$, so since the latter is weak* closed
we have  Ran$(Q) \subset {\rm Ran}(\kappa)$ again by continuity and density.  Thus Ran$(Q) =  {\rm Ran}(\kappa)$ is  the set of fixed points of $\theta_X \otimes \theta_Y$.
The result follows from this and \cite[Proposition 2.1]{BCK}.
 \end{proof}

The next result is quite important, and contains  the real case of a celebrated result of 
Effros and Ruan: 

\begin{theorem} \label{well}    If $M$ and $N$ are real von Neumann algebras then the operator space projective tensor product 
$M_* \overset{\frown}{\otimes} N_*$ is the operator space predual of  the von Neumann algebra $M \bar{\otimes} N$. 
In addition, 
 $M \bar{\otimes} N$ is real $*$-isomorphic to the fixed points of the period 2 automorphism 
$\theta_M \otimes \theta_N$ on $M_c \bar{\otimes} N_c$. 
 \end{theorem}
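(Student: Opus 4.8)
The plan is to deduce both assertions from the complex Effros--Ruan theorem together with the functoriality of the complexification for the operator space projective tensor product (Lemma \ref{hts}), for the predual, and for the normal spatial tensor product (Lemma \ref{Fub}). First recall that $M \bar{\otimes} N$ is a real von Neumann algebra, so by \cite{IP} it has a (unique) operator space predual $(M \bar{\otimes} N)_*$, realized concretely as in Lemma \ref{comw}. The goal is to produce a completely isometric, weak* homeomorphic identification $M \bar{\otimes} N \cong (M_* \overset{\frown}{\otimes} N_*)^*$; equivalently, $M_* \overset{\frown}{\otimes} N_* \cong (M \bar{\otimes} N)_*$ completely isometrically.

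Next I would compute the reasonable complexification of $M_* \overset{\frown}{\otimes} N_*$. By Lemma \ref{hts}, $(M_* \overset{\frown}{\otimes} N_*)_c \cong (M_*)_c \overset{\frown}{\otimes} (N_*)_c$. Since $M = (M_*)^*$, Ruan's duality $(X^*)_c \cong (X_c)^*$ gives $M_c \cong ((M_*)_c)^*$, so $(M_*)_c$ is an operator space predual of the complex von Neumann algebra $M_c$; by uniqueness of the operator space predual of a (complex) von Neumann algebra, $(M_*)_c \cong (M_c)_*$, and likewise for $N$. Hence $(M_* \overset{\frown}{\otimes} N_*)_c \cong (M_c)_* \overset{\frown}{\otimes} (N_c)_*$, which by the complex Effros--Ruan theorem equals $(M_c \bar{\otimes} N_c)_*$. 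On the other hand, Lemma \ref{Fub} says $M_c \bar{\otimes} N_c$ is a reasonable complexification of $M \bar{\otimes} N$, with conjugate linear weak* continuous period $2$ automorphism $\theta_M \otimes \theta_N$ whose fixed points are $M \bar{\otimes} N$. Passing to preduals (the pre-adjoint of a weak* continuous conjugate linear isometric period $2$ automorphism is again one, with the predual of its fixed points as fixed points --- compare the fact recalled after Proposition \ref{cbcmpx} that $CB(X_c,Y_c)$ is a reasonable complexification of $CB(X,Y)$, together with Lemma \ref{comw}) shows that $(M_c \bar{\otimes} N_c)_*$ is a reasonable complexification of $(M \bar{\otimes} N)_*$, the involution being the pre-adjoint of $\theta_M \otimes \theta_N$.

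I would then check that the complex Effros--Ruan isomorphism $(M_c)_* \overset{\frown}{\otimes} (N_c)_* \cong (M_c \bar{\otimes} N_c)_*$ intertwines the conjugate linear involutions induced by $\theta_M$ and $\theta_N$ on the two sides; this is where the real work lies. It follows from naturality: that isomorphism is the one sending an elementary tensor $\omega \otimes \rho$ of normal functionals to the functional $\omega \otimes \rho$ on $M_c \bar{\otimes} N_c$, and both involutions act compatibly with this assignment because $\theta_M, \theta_N$ are conjugate linear weak* continuous automorphisms and the normal spatial tensor product is functorial in each variable. With this equivariance in hand, $M_* \overset{\frown}{\otimes} N_*$ and $(M \bar{\otimes} N)_*$ have completely isometrically isomorphic reasonable complexifications via an isomorphism commuting with the two canonical $\theta$ maps, so by Ruan's uniqueness of the reasonable complexification (equivalently \cite[Theorem 3.2]{RComp}) we conclude $M_* \overset{\frown}{\otimes} N_* \cong (M \bar{\otimes} N)_*$ completely isometrically. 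Dualizing, and using the weak* continuity statements of Lemma \ref{comw}, yields the first assertion.

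Finally, for the ``in addition'' clause: by Lemma \ref{Fub} the canonical map $\kappa : M \bar{\otimes} N \to M_c \bar{\otimes} N_c$ is a weak* continuous complete isometry onto the fixed point set of $\theta_M \otimes \theta_N$. But $\kappa$ is the restriction of the unital $*$-homomorphism $B(H \otimes K) \to B(H_c \otimes K_c)$, hence is itself a $*$-homomorphism; and the fixed point set of the conjugate linear $*$-automorphism $\theta_M \otimes \theta_N$ is a weak* closed real $*$-subalgebra, i.e.\ a real von Neumann algebra. Thus $\kappa$ is a $*$-isomorphism of $M \bar{\otimes} N$ onto it. I expect the main obstacle to be precisely the equivariance/naturality verification in the third step --- ensuring the complex Effros--Ruan identification can be chosen to commute with $\theta_M$ and $\theta_N$ --- since everything else is the assembly of functoriality statements already available (Lemmas \ref{hts}, \ref{Fub}, \ref{comw}) with Ruan's uniqueness theorem.
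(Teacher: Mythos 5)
Your proposal is correct, but it takes a genuinely different route from the paper. The paper's proof is a one-line descent into the complex argument: it reruns the proof of \cite[Theorem 7.2.4]{ER} (that the Fubini tensor product of von Neumann algebras equals the normal spatial tensor product) verbatim in the real category, using Lemma \ref{Fub} together with the \emph{real} commutation theorem for tensor products from \cite[Theorem 4.4.3]{Li}, and then reads off the second assertion from Lemma \ref{Fub}. You instead complexify everything, apply the complex Effros--Ruan theorem to $(M_c)_* \overset{\frown}{\otimes} (N_c)_*$, and descend by checking that the Effros--Ruan identification intertwines the canonical conjugations before passing to fixed points. Your route buys you independence from the real commutation theorem (you only ever use complex von Neumann algebra theory), at the cost of the equivariance verification --- which you rightly identify as the crux, and which does go through on elementary tensors and hence everywhere by density, since $\langle \theta_{M_*}(\omega)\otimes\theta_{N_*}(\rho), x\otimes y\rangle = \overline{(\omega\otimes\rho)\bigl((\theta_M\otimes\theta_N)(x\otimes y)\bigr)}$. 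It is to your credit that you do not merely match up the two complexifications abstractly: as the introduction of the paper warns, a complex-linear completely isometric isomorphism of reasonable complexifications does not by itself identify the underlying real spaces, so the intertwining step is genuinely necessary and cannot be waved away by ``uniqueness of the predual'' alone. Your argument for the second assertion (that $\kappa$ is multiplicative and $*$-preserving as a restriction of $B(H\otimes K)\to B(H_c\otimes K_c)$, and that the fixed-point set of the conjugate-linear $*$-automorphism $\theta_M\otimes\theta_N$ is a weak* closed real $*$-subalgebra) agrees in substance with the paper's appeal to Lemma \ref{Fub}.
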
 

 \begin{proof}   That $M_* \overset{\frown}{\otimes} N_*$ is the operator space predual of  the normal spatial tensor product  $M \bar{\otimes} N$
 follows exactly as in the complex case (Theorem 7.2.4  in \cite{ER}), but using 
 Lemma \ref{Fub} above and the real version of the commutation theorem for tensor products
 in  \cite[Theorem 4.4.3]{Li}.  
 The last  assertion follows from  the previous result.   \end{proof}  
 
 Finally we consider the formula $L^\infty(\Omega,\mu) \bar{\otimes} Y = L^\infty(\Omega,\mu; Y)$ in 1.6.6 in \cite{BLM}, 
 for a dual operator space $Y$ with separable 
 predual.  In this case again the proof  referenced there (from Sakai's book) works in the real case, since it relies on the Dunford-Pettis theorem and the well known
 fact that $L^1(\Omega,\mu) \hat{\otimes} Y = L^1(\Omega,\mu; Y)$ (due to Grothendieck), 
 both of which are  valid in the real case. 
 (Actually we only need the first ten lines of Sakai's proof,  the rest is covered by our earlier discussion.)  This concludes our discussion of Chapter 1 in \cite{BLM}.  
 
 \section{Operator algebras} \label{oa}

 Before we discuss technical issues arising in  \cite[Chapter 2]{BLM} we say a few things about real Banach algebras.
  The first point is that the Neumann lemma and its variants, and the norm formulae 
 often  accompanying the Neumann lemma, are valid verbatim in unital real operator algebras via complexification.   This is because of the uniqueness of the operator space complexification, and the fact that the canonical map
   $A_c \to A$ is a contraction.  The basic spectral theory of real Banach algebras may be found e.g.\ in \cite{Good, Li}.  
 Nonzero multiplicative  linear  functionals (characters) $\chi$ on a real operator algebra $A$ need to be complex valued in general 
 to get a sensible theory (see \cite{Good,Li}).  If $A$ is unital then $\chi$ is  contractive.   If $A$ is not unital then $\chi$ extends to  a unital 
 character on $A^1$.  So $\chi$ is  (completely)  contractive.   Thus the Gelfand transform is a contraction, and the characters on 
 commutative unital real operator algebras are in bijective correspondence with the maximal ideals \cite{Good,Li}.  There is a natural involutive homeomorphism 
 on the spectrum of a commutative real $C^*$-algebra (see Example (3) at the end of 
\cite[Section 5.1]{Li}).
 There is therefore a good functional calculus in real $C^*$-algebras that works in the expected way 
 provided that one uses continuous functions on the spectrum that are involutive with respect to the just mentioned  involution on the spectrum
 (the real $C^*$-algebra generated by a normal element $a$ corresponds via the 
 Gelfand transform to such involutive functions on the spectrum).  See e.g.\ \cite{MMPS} for a recent survey on some aspects of real Banach algebras. 
  
 It seems to be well known that  Cohen's factorization theorem works for real Banach modules over real approximately unital Banach algebras,
 with the same proof.  Namely, 
 for a nondegenerate real left Banach $A$-module $X$  over a real approximately unital Banach algebra, if $x \in X$ of norm $< 1$ there exist
 $a \in {\rm Ball}(A)$ and $y  \in X$ of norm $< 1$ with $x = ay$. 
 Since we do not have a reference on hand we also mention that the basic proof of Cohen's theorem uses the Neumann lemma, which as we said
 works in the real case with the same norm inequality consequences,
 and builds the  element  $a$ from convex combinations of the 
 cai.  Similarly the  element  $y$ is also constructed using real methods, 
 so that 
 we still have $a  \in {\rm Ball}(A)$ and $y \in X$ with desired norm inequality.
 
 The following result is somewhat well known in the complex case, and has various known proofs, for example using a result of Kaplansky on minimal algebra norms on $C_0(K)$,
 or see e.g.\ \cite[Theorem A.5.9]{BLM}. The real case seems to be new,  but may be proved similarly using Kaplansky's result applied to the algebra 
 generated by $x^*x$.   We give a longer but selfcontained and novel route.
  
 \begin{theorem} \label{cid}    Let $\theta : A \to B$ be a contractive homomorphism from a real $C^*$-algebra into a real Banach algebra.
Then $\pi(A)$ is norm closed, and it possesses an involution with respect to which it is a real $C^*$-algebra.
 Moreover, $\pi$ is then a $*$-homomorphism into this $C^*$-algebra. If $\pi$ is one-to-one then it is an isometry. \end{theorem}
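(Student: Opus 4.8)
The plan is to reduce to the injective case and then run a bidual argument. First I would observe that $J := \ker \pi$ is a closed two-sided ideal of $A$, which is self-adjoint (as in the complex case, or by complexification), so $A/J$ is a real $C^*$-algebra and $\pi$ factors as $\pi = \bar\pi \circ q$ with $q : A \to A/J$ the quotient $*$-homomorphism and $\bar\pi : A/J \to B$ an injective contractive homomorphism. Granting that $\bar\pi$ is isometric, $\pi(A) = \bar\pi(A/J)$ is an isometric copy of the complete space $A/J$, hence norm closed; transporting the involution of $A/J$ across the isometric algebra isomorphism $\bar\pi$ (equivalently, setting $\pi(a)^* := \pi(a^*)$, which is unambiguous since $J^* = J$) makes $\pi(A)$ a real $C^*$-algebra for which $\pi$ is a $*$-homomorphism; and if $\pi$ itself is one-to-one then $J = (0)$ and $\pi = \bar\pi$ is an isometry. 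So everything reduces to the claim: \emph{an injective contractive homomorphism $\pi$ from a real $C^*$-algebra $A$ into a real Banach algebra $B$ is isometric.}

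To prove this, fix $a \in A$ with $a \neq 0$ and pass to biduals. Recall that $A^{**}$, with its (unique) Arens product, is a real $W^*$-algebra, that $B^{**}$ with the first Arens product is a real Banach algebra, and that the bitranspose $\pi^{**} : A^{**} \to B^{**}$ is then a contractive homomorphism for the first Arens products restricting to $\pi$ on $A$. Let $\delta > 0$. Since $\|a\|^2 = \|a^*a\| \in \sigma(a^*a)$, the spectral projection $q := \chi_{(\|a\|^2 - \delta,\ \|a\|^2]}(a^*a) \in A^{**}$ is nonzero, and $\big\| (a^*a)q - \|a\|^2 q \big\| \leq \delta$. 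Choose $f \in C_0(\sigma(a^*a) \setminus \{0\})$ with $f \geq 0$, $f \neq 0$ and support contained in $(\|a\|^2 - \delta,\ \|a\|^2]$; then $h := f(a^*a)$ is a nonzero element of $A$ with $hq = h$.

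The key step is that $\pi^{**}(q) \neq 0$: otherwise $\pi(h) = \pi^{**}(hq) = \pi^{**}(h)\,\pi^{**}(q) = 0$, contradicting injectivity of $\pi$ together with $h \neq 0$. Since $q$ is idempotent and $\|\pi^{**}(q)\| \leq 1$, and a nonzero idempotent in a Banach algebra has norm at least $1$, we get $\|\pi^{**}(q)\| = 1$. Hence, using that $\pi^{**}$ is a contractive homomorphism,
\[
\|\pi(a^*a)\| = \|\pi^{**}(a^*a)\| \ \geq\ \|\pi^{**}(a^*a)\,\pi^{**}(q)\| \ =\ \|\pi^{**}\big((a^*a)q\big)\| \ \geq\ \|a\|^2\,\|\pi^{**}(q)\| - \big\| (a^*a)q - \|a\|^2 q \big\| \ \geq\ \|a\|^2 - \delta .
\]
Letting $\delta \to 0$ gives $\|\pi(a^*a)\| \geq \|a\|^2$, so $\|\pi(a^*a)\| = \|a\|^2$ by contractivity, and finally $\|a\|^2 = \|\pi(a^*)\pi(a)\| \leq \|\pi(a^*)\|\,\|\pi(a)\| \leq \|a\|^2$ forces $\|\pi(a)\| = \|a\|$, as required.

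The main obstacle I anticipate is not in the short computation above but in checking that the bidual/Arens apparatus works in the real setting exactly as in the complex one: that $A^{**}$ is a real $W^*$-algebra carrying the expected spectral and support-projection calculus, that its two Arens products coincide with the $W^*$-product, and that $\pi^{**}$ is a homomorphism for the first Arens products. All of this should follow from the corresponding complex facts together with $(A^{**})_c \cong (A_c)^{**}$ and standard Arens theory. A shorter but less self-contained variant avoids biduals: restrict $\pi$ to the commutative real $C^*$-algebra $C^*(a^*a) \cong C_0(\Omega)$, note that $f \mapsto \|\pi(f)\|$ is then an algebra norm on $C_0(\Omega)$ dominated by the sup norm, and invoke Kaplansky's theorem that the sup norm is the smallest algebra norm on $C_0(\Omega)$.
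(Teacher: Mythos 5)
Your proof is correct, but it follows a genuinely different route from the paper's. The paper first treats the commutative case: it passes to the bidual, invokes the structure theorem $M \cong L^\infty(X,\Cdb) \oplus^\infty L^\infty(Y,\Rdb)$ for commutative real $W^*$-algebras from \cite{BCK}, proves isometry on the totally real summand via density of real linear combinations of mutually orthogonal projections (using, as you do, that a nonzero contractive idempotent has norm exactly $1$), and handles the $L^\infty(X,\Cdb)$ summand by showing that $J=\pi(i)$ endows the closure of its image with a complex Banach algebra structure for which $\pi$ becomes complex linear; the general case is then deduced by applying the commutative case to selfadjoint elements and invoking the polar decomposition $a=v|a|$ in $A^{**}$. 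You instead attack a general element directly: after the (correct) reduction to the injective case, a single spectral projection $q$ of $a^*a$ supported just below $\|a\|^2$, pushed through $\pi^{**}$, gives $\|\pi(a^*a)\|\ge \|a\|^2-\delta$, and the $C^*$-identity finishes. This buys a shorter, case-free argument that avoids the commutative structure theory, the complex-structure detour and the polar decomposition; what it costs is the Borel functional calculus for $a^*a$ in the real $W^*$-algebra $A^{**}$, which as you note is harmless since $(A^{**})_c\cong (A_c)^{**}$ and real Borel functions of the conjugation-fixed element $a^*a$ are again fixed by the (weak* continuous) conjugation, hence lie in $A^{**}$. Your closing alternative via Kaplansky's minimality theorem applied to $C^*(a^*a)$ is precisely the shorter known route that the paper mentions just before the theorem and deliberately declines in favour of a self-contained argument. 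The only steps worth spelling out in a final write-up are that $\pi^{**}$ is a homomorphism for the first Arens products (the paper relies on this too) and that closed two-sided ideals of real $C^*$-algebras are selfadjoint, so that $A/\ker\pi$ is indeed a real $C^*$-algebra; both follow by complexification as you indicate.
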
 

 \begin{proof}  Assume first that $A$ is commutative.  By replacing $B$ by $\overline{\pi(B)}$ we may assume if one wishes 
  that $B$ is commutative, so Arens regular, although
 this is not necessary.  
 By extending to the bidual we may assume that  $A = M$ is a commutative  real $W^*$-algebra and $\pi$ is weak* continuous.  (Note that if $\pi^{**}$ is a 
 $*$-homomorphism into a $C^*$-algebra then $\pi$ is also, and its range is closed and is a $C^*$-algebra.)     Quotienting by the kernel, a weak* closed ideal,
 we may assume that $\pi$ is one-to-one.    
 By \cite[Theorem 3.1]{BCK} we may assume  that 
 $M = L^\infty(X,\Cdb) \oplus^\infty  L^\infty(Y,\Rdb)$.   Let $p$ be the projection in $M$ corresponding to the first summand. 
  We claim that $\pi(pM) = \pi(L^\infty(X,\Cdb))$ and $\pi((1-p) M) = \pi(L^\infty(Y,\Rdb))$ are closed and are $C^*$-algebras, so that $\pi(pM)  \oplus^\infty \pi((1-p) M)$ is a 
  $C^*$-algebra.      
  
   If $e$ and $f$ are mutually orthogonal projections in $M$ then 
  $\pi(e)$ and $\pi(f)$ are contractive nonzero idempotents with zero product, and are norm 1.   Hence it is easy to see that $\pi$ is isometric on real linear combinations
  of mutually orthogonal projections in $M$.  Since such real linear combinations are norm dense in
  $L^\infty(Y,\Rdb)$, it follows that $\pi$ is isometric on $(1-p) M$.  So  $\pi((1-p) M)$ is closed and has involution making it a 
  commutative  real $W^*$-algebra with its original norm, and $\pi$ is  a $*$-homomorphism.   
    
  We now consider the complementary space  $p M = L^\infty(X,\Cdb)$. Let $D$ be the closure of $\pi(L^\infty(X,\Cdb))$.   By the argument above $\pi$ is isometric 
  on $(pM)_{sa}$.  A similar argument will  show it isometric  on all of $pM$.   If $J = \pi(i)$ then $J^2 = -I$, and since $J$ and $\pi(-i)$ are contractive with product 1, they have norm 1. Note that 
    $$\| (\cos \theta I + J \sin \theta) x \| = \| \pi(cos \theta \cdot 1  + i \cdot \sin \theta) x \| \leq \| \pi(cos \theta \cdot 1  + i 1 \cdot \sin \theta) \| \| x \| \leq \| x \| $$
  since $\pi$ is contractive and $|cos \theta  + i \, \sin \theta | = 1$.  So  $D$ is a complex Banach space with $i x = J x$ for $x \in D$
  (by the simple criterion in the second paragraph of Section 3).
  Indeed it clearly is a  complex Banach algebra (c.f.\ \cite[Proposition 6.2]{MMPS}). 
  Now $\pi(i f) = J \pi(f) = i \pi(f)$, so that $\pi$ is a complex linear contractive homomorphism.  By the argument in the last paragraph but taking complex combinations 
     we have that $\pi$ is isometric on $p M$,
  and $\pi(pM)$ is closed and possesses an involution with respect to which it is a complex, hence real, commutative $C^*$-algebra, and $\pi$ is  a $*$-homomorphism. 
     Let $q = \pi(p)$, the identity of the latter $C^*$-algebra.

  For  $x, y \in M$ we have $$\| q \pi(x) \| \leq \| q \pi(x) + (1-q) \pi(y) \| \leq \| p x + (1-p) y \| = \max \{ \| px \| , \| (1-p) y \| 
  \} .$$
  Since $\theta$ is isometric on the two pieces the latter equals $\max \{ \| q \pi(x) \| , \| (1-q) \pi(y) \| \}$.  Thus $\pi(M) = \pi(pM)  \oplus^\infty \pi((1-p) M)$
  isometrically, and is norm closed and possesses an involution with respect to which it is a real $C^*$-algebra.
  Also  $\pi$ is  an isometric  $*$-homomorphism.

  In the general case, again $\theta^{**} :
A^{**} \to B^{**}$ is a unital contraction, and it is easy to see that it is a homomorphism  with respect to the left Arens product on
$B^{**}$ say.   So we may assume that $A$ is a real von Neumann algebra.
Indeed if the range of $\theta^{**}$ is a $C^*$-algebra then $\theta$ is a $*$-homomorphism by \cite[Theorem 2.6]{BT}, so has closed range
which is a $C^*$-algebra.  
As above we may assume that $\theta$ is one-to-one.  Claim: $\theta$ is an isometry.
By the commutative case, $\theta$ is an isometry on selfadjoint elements since they generate a commutative unital $C^*$-algebra.
For $a \in A$ let $a = v|a|$ be the polar decomposition in $A$ (see \cite[Proposition 4.3.4 (2)]{Li}).  
 Then $$\| \theta(a) \| = \| \theta(v|a|) \| = \| \theta(|a|) \| = \| |a | \|,$$ 
since $\theta(v), \theta(v^*)$ are contractions and $|a| = v^* v|a|$.  So $\theta$ is an isometry on $A$.   Hence $\pi(A)$ is closed, and has an involution 
with respect to which it is a $C^*$-algebra. 
   \end{proof} 

We thank Roger Smith for a discussion on the general case in the last paragraph, and for some suggestions.  

We now survey the real case of the remaining results in Chapter 2 of \cite{BLM} (see also the last paragraphs of the Introduction).   As said elsewhere, the selfadjoint elements in a real
 operator algebra are not necessarily the hermitian elements,
although  a selfadjoint element $a$ is positive iff $\varphi(a) \geq 0$ for real states $\varphi$.    States are thoroughly discussed in
\cite{BT}.  One may add to that that the real states $\varphi$ on an approximately unital real operator algebra $A$ are precisely the real parts of complex states  on $A_c$ (such as $\varphi_c$), or of a complex $C^*$-algebra generated by $A_c$ (using \cite[Lemma 4.13]{BT} in the approximately unital case).  However the real parts of two different such complex states may coincide on $A$. A similar fact holds in the Jordan operator algebra case. 
The results 2.1.5--2.1.8 and the relevant Appendix A.6 in \cite{BLM} rely in places on Cohen factorization, but as we mentioned above the latter is valid
in the real case. 
Lemma 2.1.9 and 2.1.18 in the real case appear as Proposition 4.3 and Lemma 4.12 in \cite{BT}.
Indeed most results in Section 2.1  of \cite{BLM} are just as in the complex case, or are explicitly 
in \cite{Sharma,BT}.  E.g.\ the unitization $A^1$ was studied in \cite[Section 3]{Sharma}
where it was checked that $(A^1)_c \cong (A_c)^1$ completely isometrically.    For the $\cU(X)$ construction from Section 2.2  of \cite{BLM} it is clear that 
$\cU(X)_c \cong  \cU(X_c)$ completely isometrically and as operator algebras (just as we saw the analogous result for $\cS(X)_c$ in the introduction).   
Sharma abstractly characterized real operator algebras in \cite{Sharma}.  The rest of the results in Section 2.2--2.5 in   \cite{BLM}
are virtually unchanged in the real case, with the exception (as in Chapter 1) being the complex interpolation results in Section 2.3. 
We mention in particular the fact that  the quotient of a real operator algebra $A$ by a closed two sided ideal is a real operator algebra, which  follows as in 
 \cite[Proposition 2.3.4]{BLM} or from the complex case and the 
 earlier complete isometry $A/I \subset (A/I)_c \cong A_c/I_c$ \cite[Lemma 5.12]{Sharma}. 

The theory of the
 left, right and two-sided multiplier algebras of an approximately unital operator algebra  $A$ in \cite[Section 2.6]{BLM} is just as in the complex case.   The following follows 
 from Lemma \ref{mltsco} but we include the short direct proof.
 
  \begin{lemma} \label{mulr}  For a real approximately unital operator algebra $A$,    
  a reasonable complexification of $LM(A)$ is $(LM(A_c), T \mapsto T_c)$.  Similarly $M(A)_c = M(A_c)$and $RM(A)_c = RM(A_c)$
  completely isometrically isomorphically. \end{lemma}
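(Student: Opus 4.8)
The plan is to exhibit $LM(A_c)$ as a reasonable operator space complexification of $LM(A)$ by producing the appropriate conjugate linear completely isometric period 2 automorphism, as in the criterion of \cite[Theorem 3.2]{RComp} recalled in the introduction. First I would recall that for an approximately unital operator algebra $B$ one has the concrete description $LM(B) = \{ T \in CB(B) : T$ is a left multiplier$\}$, and more usefully that $LM(B)$ sits completely isometrically (as an operator algebra) inside $I_{11}(B)$, namely $LM(B) \cong \{ a \in I_{11}(B) : aB \subseteq B\}$ (the real case of \cite[Section 2.6]{BLM}, which the excerpt has already declared valid, together with the representation of multiplier algebras via the injective envelope of the Paulsen system). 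The map $T \mapsto T_c$ sends $LM(A)$ into $LM(A_c)$ since $T_c$ is a left multiplier on $A_c$ whenever $T$ is one on $A$ (if $T(ab) = T(a)b$ then the complex bilinear extension forces $T_c((a+ia')b) = T_c(a+ia')b$).

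Next I would check that this embedding $\kappa : LM(A) \to LM(A_c)$, $T \mapsto T_c$, is a real complete isometry and that $LM(A_c) = \kappa(LM(A)) \oplus i\,\kappa(LM(A))$ as a vector space. The complete isometry statement follows from Proposition~\ref{cbcmpx} applied to $T$, once one knows that the multiplier norm of $T$ equals the multiplier norm of $T_c$; this in turn follows from the $\tau_T$-criterion (the real \cite[Theorem 5.4]{Sharma}), since $\tau_T$ is completely contractive iff $\tau_{T_c} = (\tau_T)_c$ is, exactly as used in the proof of Lemma~\ref{mltsco}. For the vector-space direct sum decomposition: given $S \in LM(A_c)$, write $S = T_1 + iT_2$ where $T_1, T_2 : A \to A$ are the compressions of the real and imaginary parts of $S$ to $A$; using $I(A)_c = I(A_c)$ and $I_{11}(A)_c \cong I_{11}(A_c)$ (Lemma~\ref{mlts}) one identifies $S$ with an element $b + ic \in (I_{11}(A))_c$ satisfying $(b+ic)A_c \subseteq A_c$, which, separating real and imaginary parts, is equivalent to $bA \subseteq A$ and $cA \subseteq A$, i.e. $T_1, T_2 \in LM(A)$. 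This is precisely the argument already carried out for $\cM_\ell$ in Lemma~\ref{mltsco}, and it is cleaner to simply invoke $\cM_\ell(A) = LM(A)$ (Example (1) after Proposition~\ref{3inal}) together with $\cM_\ell(A_c) \cong \cM_\ell(A)_c$ from that lemma — indeed this gives the whole statement at once, which is why the excerpt says "follows from Lemma~\ref{mltsco}."

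For the "reasonable" part — the direct route — I would define $\Theta : LM(A_c) \to LM(A_c)$ by $\Theta(S) = \theta_A \circ S \circ \theta_A$, where $\theta_A$ is the conjugate linear period 2 automorphism of $A_c$ with fixed points $A$. Since $\theta_A$ is a conjugate linear completely isometric algebra automorphism, conjugation by it sends left multipliers to left multipliers, is conjugate linear (on the scalar $i$: $\Theta(iS) = \theta_A \circ (iS) \circ \theta_A = -i\,\theta_A \circ S \circ \theta_A = -i\Theta(S)$ because $\theta_A$ reverses $i$ on both sides), has period 2, and is a complete isometry on $LM(A_c) \subseteq CB(A_c)$. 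Its fixed points are exactly the $S$ commuting appropriately with $\theta_A$, namely those $S$ with $S(A) \subseteq A$, which is $\kappa(LM(A))$. Then \cite[Theorem 3.2]{RComp} gives that $(LM(A_c), \kappa)$ is a reasonable operator space complexification of $LM(A)$, and by Ruan's uniqueness theorem it is \emph{the} reasonable complexification, so $LM(A)_c \cong LM(A_c)$ completely isometrically as operator algebras. The cases $M(A)$ and $RM(A)$ are identical, using $\cM_r(A) = RM(A)$ and $M(A) = LM(A) \cap RM(A)$ (with the two-sided multiplier norm), or simply by symmetry; and $M(A)_c = M(A_c)$ follows since $\theta_A$-conjugation preserves two-sidedness. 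The main obstacle is the bookkeeping verifying that conjugation by $\theta_A$ is a complete isometry on the multiplier algebra in its multiplier operator space structure (not merely as a subspace of $CB(A_c)$), i.e. that it is compatible with the identification $M_n(LM(A_c)) \cong LM(C_n(A_c))$; this reduces to the observation that $\theta_A$ induces $\theta_{C_n(A)}$ on $C_n(A)_c = C_n(A_c)$, which is routine but should be stated.
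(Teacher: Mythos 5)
Your proposal is correct and takes essentially the same approach as the paper: both arguments exhibit $LM(A_c)$ as a reasonable (i.e.\ $\theta$-invariant) subspace of a known complexification of the form $\{\zeta + i\xi : \zeta A \subseteq A,\ \xi A \subseteq A\}$ and then invoke Ruan's uniqueness theorem, and you also correctly identify the shortcut via Lemma~\ref{mltsco} and $\cM_\ell(A)=LM(A)$ that the paper itself points to. The only cosmetic difference is that the paper's short direct proof works inside $(A_c)^{**}\cong (A^{**})_c$ using $LM(B)=\{\eta\in B^{**}:\eta B\subseteq B\}$, which makes the splitting into real and imaginary parts a two-line computation, whereas you work inside $I_{11}(A)_c$ and additionally spell out the conjugation $S\mapsto \theta_A\circ S\circ\theta_A$ explicitly.
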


  \begin{proof} 
  We have (completely isometrically as operator algebras) 
 $$LM(A_c) = \{ \eta \in (A_c)^{**} \cong (A^{**})_c : \eta A_c \subset A_c \},$$ which is
 $\{ \zeta + i \xi  \in (A^{**})_c : \zeta A \subset A, \xi A \subset A \}.$   Since the latter
 is reasonable it  is 
  $LM(A)_c.$  
  It follows that $(LM(A_c), T \mapsto T_c)$ is a reasonable complexification of $LM(A)$; also  $T \in LM(A)$ iff $T_c \in LM(A_c)$. 
  The others are similar.
   \end{proof} 

Also 2.6.14--17 (for real selfadjoint UCP maps),  and the real version of Section 2.7, hold in the real case.
The real version of the important 
 \cite[Theorem 2.7.9]{BLM} holds by complexification.  For the readers convenience
  we walk quickly through the details as being representative of such arguments:
 
 \begin{theorem} \label{doa}   Let $M$ be a real operator algebra which is a dual operator space. Then the product on $M$ is separately weak* continuous, and $M$ is a dual operator algebra. That is, there exists a  real Hilbert space $H$ and a  weak* continuous completely isometric homomorphism $\pi : M \to  B(H).$ \end{theorem}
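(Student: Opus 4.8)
The plan is to reduce the theorem to its complex counterpart \cite[Theorem 2.7.9]{BLM} by passing to the complexification $M_c$ and using the uniqueness of the reasonable complexification together with the structure of dual spaces established earlier (Lemma \ref{comw} and Lemma \ref{mulr}). First I would observe that since $M$ is a real dual operator space with predual $M_*$, the complexification $M_c$ is a complex dual operator space with predual $(M_*)_c$, and by Proposition \ref{cbcmpx} (or directly from the construction around (\ref{ofr})) the real linear complete isometry $\kappa : M \to M_c$ and the conjugate linear period 2 automorphism $\theta_M$ on $M_c$ are weak* continuous, as in Lemma \ref{comw}. The key algebraic point is that $M_c = M + iM$ is a complex operator \emph{algebra}: the product $(a+ib)(c+id) = (ac - bd) + i(ad + bc)$ extends the product of $M$, and by the functoriality established in the discussion of Section \ref{oa} (e.g.\ via $\cU(X)_c \cong \cU(X_c)$, or simply because $M_c$ sits completely isometrically as a subalgebra of $B(H)_c \cong B(H_c)$ when $M \subset B(H)$) this product is completely contractive on $M_c$, so $M_c$ is a complex operator algebra.

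Next I would apply the complex dual operator algebra theorem \cite[Theorem 2.7.9]{BLM} to $M_c$: the product on $M_c$ is separately weak* continuous, and there is a complex Hilbert space $\mathcal{K}$ and a weak* continuous complex-linear completely isometric homomorphism $\rho : M_c \to B(\mathcal{K})$. Separate weak* continuity of the product on $M$ then follows immediately by restriction, since $\kappa : M \to M_c$ is a weak* homeomorphism onto its (weak* closed) range and the product on $M$ is the restriction of that on $M_c$. For the concrete representation, I would view $\mathcal{K}$ as a real Hilbert space $H$ (every complex Hilbert space is a real Hilbert space, as used throughout the paper), so $B(\mathcal{K}) \subset B(H)$ completely isometrically and weak* homeomorphically as real operator spaces. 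Then $\pi := \rho \circ \kappa : M \to B(H)$ is a real linear, weak* continuous, completely isometric homomorphism. This gives exactly the statement.

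The step I expect to require the most care — though it is not deep — is checking that $M_c$ really is a \emph{dual} operator algebra in the precise sense required by \cite[Theorem 2.7.9]{BLM}, i.e.\ that it is a complex operator algebra which is also a dual operator space \emph{with the same matrix norms}; this is where one needs $M_c \cong (M_*)_c^*$ completely isometrically (from \cite[Section 2]{BCK} or \cite{RComp} applied to the predual) together with the fact that the complexified product is completely contractive, which I would justify by the concrete embedding $M \hookrightarrow B(H)$ inducing $M_c \hookrightarrow B(H)_c \cong B(H_c)$ as operator algebras. Everything else is a routine transfer: separate weak* continuity and the existence of the representation are inherited from the complex case by restriction along the weak* homeomorphic complete isometry $\kappa$, exactly as the paper indicates ("holds by complexification").
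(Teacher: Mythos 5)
Your proposal is correct and follows essentially the same route as the paper: complexify $M$ to get a complex operator algebra $M_c$ with operator space predual $(M_*)_c$, note that $M$ sits weak* closed and weak* homeomorphically inside $M_c$ (Lemma \ref{comw}), apply the complex result \cite[Theorem 2.7.9]{BLM}, and restrict the resulting weak* continuous completely isometric representation to $M$, viewing the complex Hilbert space as a real one. The extra care you flag about $M_c$ being a dual operator algebra with the correct matrix norms is exactly the content the paper compresses into its opening sentence, and your justification via the concrete embedding $M_c \hookrightarrow B(H)_c \cong B(H_c)$ is sound.
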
 

 \begin{proof}   Indeed if $A$ is a real operator  algebra with an operator space predual $Y$, then $A_c$ is
a complex operator  algebra with an operator space predual $Y_c$.  Also $A$ is a weak* closed real subalgebra of its complexification by  our comments above
on \cite[Section 1.4]{BLM}.   
 Thus by \cite[Theorem 2.7.9]{BLM} there exists a (complex hence) real Hilbert space $H$ and a weak* continuous 
completely isometric homomorphism $\pi : A \to B(H)$ with $\pi(A)$ a weak* closed real  operator algebra on $H$.  \end{proof}

We call these the {\em real dual operator algebras}.  We remark that some of the real theory of dual operator algebras also
follows from the involutive approach as in \cite[Section 4]{BWinv}.

We end this section with a few words about the completely isomorphic version of the theory. 
As mentioned in the introduction we only check some selected results in Chapter 5 of 
\cite{BLM}.   The results in Section 5.1 are valid in the real case with the usual exception of the complex interpolation result (5.1.10). 
As we said in Section 2 above, the main result in Section 5.2
follows from the complex case by complexification.  Similarly, the matching result for modules 5.2.17 is valid in the real case with
unchanged proof.

 \section{Operator modules} \label{om}
 The real analogues of nearly all results in Chapter 3 in \cite{BLM} are unproblematic, although some of these results need some facts about operator space multipliers 
 from Section \ref{ospm} above in place of their complex variants.  The complexification of what are called operator modules, $h$-modules, Hilbert modules,
 and matrix normed modules in 
 that chapter are complex operator modules, $h$-modules, and matrix normed modules (using for example facts mentioned in Section \ref{os} about the complexification of tensor products
 in reference to Section 1.5 in \cite{BLM}). 
 For example using this principle the proof in  Section \ref{absc} that the real version of the Christensen-Effros-Sinclair theorem characterizing 
operator modules in \cite[Theorem 3.3.1]{BLM}) may be rephrased in terms of complexifying all spaces and using  the last assertion of 
Lemma \ref{hts}.  

 Items 3.1.11 and 3.5.4-3.5.5 in \cite{BLM} uses Cohen factorization, but as we mentioned above the latter
 and several accompanying results in Appendix A.6 in \cite{BLM} are  valid in the real case.
 Theorem 3.2.14 there uses Appendix A.1.5.   However the latter result follows in the real case by complexification. 
 Indeed if $W$  is as  in Appendix A.1.5 and $T$ is as in the fifth line of A.1.5 
 then we have $W_c$ weak* closed in $B(H_c)$, and clearly $(W_c)^{(\infty)} = (W^{(\infty)})_c$.  We also clearly  have $T_c \zeta \in [(W^{(\infty)})_c \, \zeta]$ 
 for $\zeta \in (H_c)^{(\infty)} = (H^{(\infty)})_c$.  
 Since $(W_c)^{(\infty)}$ is reflexive by A.1.5 in the complex case, 
 we have $T_c  \in (W_c)^{(\infty)} 
  = (W^{(\infty)})_c$, so that $T \in W^{(\infty)}$. 
 
 Much of the real version of Section 3.7 is classical, in the literature in some form, and is largely due to E. Behrends and his collaborators 
 \cite{Behr,HWW}.  Some of this is explicitly discussed in Section \ref{ospm}, mostly in Theorem \ref{centmin} and its proof. 
  Some other results in Section 3.7  follow immediately from the complex case by complexification. 
 The real versions of results in Section 3.8 in \cite{BLM} follow with the same proofs or by complexification. 
 
  \section{Operator spaces and injectivity, etc.}  \label{inj4} 

 We turn to the few remaining parts of Chapter 4  in \cite{BLM}:  The motivational Section 4.1 is largely a review of the classical Shilov and Choquet boundary of complex function spaces.
 There is a  literature of these boundaries  for real function spaces but we will say no more here on this topic since our goal is the more
 general real operator space case. 
  Early results in Section  4.2 and on the real injective envelope and $C^*$-envelope
 are covered in \cite{Sharma} (some are mentioned without proof
 in Ruan's papers on real operator spaces). 
 Indeed items up to 4.2.7 and 4.2.11 are in \cite{Sharma} or are obvious in the real case.
Corollary 4.2.8 there is done  in Theorem 4.2 
of \cite{BCK},  and Corollary  4.2.9 follows by complexification.  
  The first part of 4.2.10 is as in the complex case. 
   That $I(M_{m,n}(X)) \cong M_{m,n}(I(X))$ is generalized in 4.6.12 in \cite{BLM} 
  whose proof is unchanged in the real case.  It is proved in  \cite{BCK} that
  a  complex operator space is real injective if and only if it is complex injective.
    
  The real C*-envelope is discussed in \cite{Sharma} and \cite[Section 4]{BCK}.
  The real versions of the important facts in 4.3.2 and 4.3.6 in \cite{BLM} about the $C^*$-envelope 
are true, but some of these use the real versions of facts from Chapters 1 and 2 of   \cite{BLM}  that are 
discussed above.  
       Examples (1) and (2) in 4.3.7 are essentially unchanged except that one must use the classification of
  finite dimensional real $C^*$-algebras in \cite[Theorem 5.7.1]{Li}.   We have not checked 4.3.8--4.3.11 although we would guess that
  4.3.8 and the Dirichlet algebra results are unchanged.   These results seem like a possibly interesting project.
   Theorem 4.4.3  in   \cite{BLM} is valid, and its Corollary 4.4.4 was already in \cite{ROnr}.  The remaining 
  results in Section 4.4 in \cite{BLM}  are unchanged, or follow by complexification 
  as in the case of Youngson's theorem 4.4.9 and the Corollary after that.

   In view of the importance to real operator spaces and their complexifications of the space $\bar{X}$ discussed in and around Lemma  \ref{lem2},
   the following is often quite useful.

   \begin{proposition}  \label{ciop} If $X$ (resp.\ $A$) is a complex  operator space (resp.\ algebra), then 
    $$\overline{(A^1)}= (\bar{A})^1 \; , \; I(\bar{X}) = \overline{I(X)}  \; , \;  {\mathcal T}(\bar{X}) =  \overline{{\mathcal T}(X)}  \; , \;   C^*_e(\bar{A}) =  \overline{C^*_e(A)}.$$
\end{proposition}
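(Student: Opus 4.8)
The plan is to verify the four identities one at a time, in each case using the uniqueness characterizations of the relevant object together with the fact that $\overline{(\cdot)}$ is a conjugate-linear completely isometric functor on complex operator spaces (as recalled around Lemma \ref{lem2}). Throughout, the key observation is that applying the bar functor to a diagram of complete isometries/completely contractive maps yields another such diagram, so any universal property that characterizes $I(X)$, $\mathcal{T}(X)$, or $C^*_e(A)$ up to the appropriate isomorphism is preserved.

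First, for $\overline{(A^1)} = (\bar{A})^1$: recall $A^1$ is the unitization of a complex operator algebra $A$ (with its canonical operator algebra structure, e.g.\ as in \cite[Section 2.1]{BLM}). If $A$ is embedded completely isometrically and homomorphically in $B(H)$, then $A^1$ may be taken to be $A + \mathbb{C} I_H$. Applying the bar functor, $\bar{A} \subset \overline{B(H)} = B(\bar{H})$ completely isometrically and homomorphically (the bar functor respects products since $\overline{xy} = \bar{x}\,\bar{y}$ in $\overline{B(H)}$), and $\overline{A^1} = \bar{A} + \mathbb{C} I_{\bar{H}} = (\bar{A})^1$, where the last identification uses that the unitization is independent of completely isometric homomorphic representation. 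One should note the trivial case where $A$ is already unital (then $A^1 = A \oplus^\infty \mathbb{C}$ and the identity is immediate).

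Next, for $I(\bar{X}) = \overline{I(X)}$: by the defining property, $(I(X), j)$ is an injective envelope of $X$. Apply the bar functor: $\overline{I(X)}$ is a complex operator space, $\bar{\jmath} : \bar{X} \to \overline{I(X)}$ is a complete isometry, $\overline{I(X)}$ is injective (because injectivity is detected by diagrams of completely contractive maps, and a completely contractive map $T : \overline{U} \to \overline{V}$ corresponds to a completely contractive $\bar{T} : U \to V$; so injectivity of $I(X)$ transfers to $\overline{I(X)}$), and $\overline{I(X)}$ is rigid/essential over $\bar{X}$ for the same diagrammatic reason. Hence $\overline{I(X)}$ is an injective envelope of $\bar{X}$, and by uniqueness of the injective envelope $I(\bar{X}) = \overline{I(X)}$. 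The identity $\mathcal{T}(\bar{X}) = \overline{\mathcal{T}(X)}$ for the ternary envelope then follows: $\mathcal{T}(X)$ is the smallest ternary subsystem of $I(X)$ containing $X$; since the ternary product on $\overline{I(X)}$ is $\overline{[x,y,z]} = [\bar{x},\bar{y},\bar{z}]$ (conjugate-linear in $y$ matching the conjugate-linear bar map), the smallest ternary subsystem of $\overline{I(X)}$ containing $\bar{X}$ is exactly $\overline{\mathcal{T}(X)}$. Finally $C^*_e(\bar{A}) = \overline{C^*_e(A)}$ follows by the same token from the universal property of the $C^*$-envelope of the operator algebra (or unital operator space) $A$: $\overline{C^*_e(A)}$ is a $C^*$-algebra (with $\overline{x}\cdot\overline{y} = \overline{xy}$, $(\bar x)^* = \overline{x^*}$, as in the $B$-case of Lemma \ref{lem2}), it is generated by $\bar{A}$, and it inherits the universal co-universal property from $C^*_e(A)$ under the bar functor; uniqueness of the $C^*$-envelope gives the identity. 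One may also simply observe $C^*_e(\bar A)$ is the $C^*$-algebra generated by $\bar A$ inside $\mathcal{T}(\bar A) = \overline{\mathcal{T}(A)}$ (or inside $I(\bar A)$), which is $\overline{C^*_e(A)}$.

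I do not expect a serious obstacle here; the content is entirely formal functoriality. The one point requiring a small amount of care is bookkeeping the conjugate-linearity: the bar map is conjugate linear, so in each universal property one must check that the relevant structure maps (unit inclusion, ternary bracket, $*$-operation, product) interact correctly with conjugation — in particular that the ternary bracket $[\bar x, \bar y, \bar z]$ is conjugate-linear in $\bar y$, matching the conjugate-linear variable in the original bracket, so that $\overline{\mathcal{T}(X)}$ really is a ternary \emph{subsystem} of $\overline{I(X)}$ and not merely a real-linear subspace. Once that is checked, each of the four identities is immediate from the corresponding uniqueness theorem already cited in the excerpt.
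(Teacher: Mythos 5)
Your proposal is correct and follows essentially the same route as the paper: apply the bar functor to the universal injectivity and rigidity diagrams to see that $\overline{I(X)}$ is an injective envelope of $\bar X$, then identify $\overline{\mathcal{T}(X)}$ as the subTRO of $\overline{I(X)}$ generated by $\bar\jmath(\bar X)$, and treat $A^1$ and $C^*_e(A)$ by the analogous uniqueness arguments. The paper's only addition is a second, more concrete viewpoint --- writing $\bar X = (X^\circ)^\star$ inside the containing $C^*$-algebra $I(\mathcal{S}(X))$ and its opposite, which makes the conjugate-linearity bookkeeping you flag at the end entirely transparent --- but this is a presentational supplement rather than a different proof.
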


\begin{proof}    If $(I(X),j)$ is a (complex operator space)  injective envelope of a complex operator space $X$, then 
$(\overline{I(X)},\bar{j})$ is an injective envelope of $\bar{X}$.   Indeed as indicated in  \cite[Proposition 2.1]{BWinv}, a routine diagram chase (i.e.\ applying the functor above
the proposition to the `universal injectivity and rigidity diagrams') shows that
 $\overline{I(X)}$ is 1)\ injective, and 2)\ has the `rigidity property', so is an injective envelope of $\bar{X}$.  
 
 In complicated situations it is sometimes very useful to view the above 
  in terms of a containing $C^*$-algebra $B$ where it is much easier to see what the `bar' is doing in terms of the $C^*$-algebra adjoint and opposite
  (indeed recall from the proof of Lemma \ref{lem2} that $\bar{B} = B^\circ$, the `opposite' $C^*$-algebra). 
  We take the time to spell this out in a bit more detail. 
 Namely consider the $C^*$-algebra $B = I({\mathcal S}(X))$ which has $I(X)$ as its 1-2 corner $Z$ (see e.g.\ 4.4.2 in \cite{BLM}).   
Viewed in this way 
 it is clear that $I(X)$ is a ternary system or TRO (that is,  $Z Z^* Z \subset Z$).  In terms of 
 the `opposite' $X^\circ$ 
 and `adjoint' $X^\star$ constructions from 1.2.25 in \cite{BLM} and  
 \cite[Proposition 2.1]{BWinv} we have $\bar{X} = (X^\circ)^\star$. 
 We view $X^\circ$ and $I(X)^\circ$ as the appropriate subspaces of the `opposite $C^*$-algebra' $B^{\circ}$.
 Then $\bar{X}$ and $\overline{I(X)}$ are the `adjoints' of the latter subspaces of $B^{\circ}$ (by adjoint we mean the $C^*$-algebra
 involution of the $C^*$-algebra $B^{\circ}$).    ]
 Similarly, for any complex subTRO $Z$  of $B$ the canonical map $Z \to \bar{Z}$ is therefore a conjugate linear TRO morphism and `complete isometry'.
 Viewed in this way it is easy to see that  $\overline{I(X)}$ is again a TRO, and indeed  $I(\bar{X}) = \overline{I(X)}$.  Similarly, 
  $(\overline{{\mathcal T}(X)},\bar{j})$ is a ternary  envelope of $\bar{X}$.
  That is, $\overline{{\mathcal T}(X)}$  is the subTRO of $\overline{I(X)}$ generated by 
   $\bar{j}(\bar{X})$.  
  This can also be deduced 
  from \cite[Proposition 2.1]{BWinv}:
  $${\mathcal T}(\bar{X}) = {\mathcal T}((X^\circ)^\star) 
  = {\mathcal T}(X^\circ)^\star = ({\mathcal T}(X)^\circ)^\star =\overline{{\mathcal T}(X)}.$$
  Similarly, if $X = A$ is a complex  operator algebra then one may deduce in the same way from \cite[Proposition 2.1]{BWinv}
  that $\overline{(A^1)}= (\bar{A})^1$, and 
  $(\overline{C^*_e(A)}, \bar{j})$ is a $C^*$-envelope of $\bar{A}$. 
\end{proof} 

It follows for example 
from the last result and Lemma \ref{lem2} that for a complex operator space $X$ we have $I(X_c)  \cong I(X \oplus^\infty \bar{X})  \cong I(X) \oplus^\infty \overline{I(X)} \cong I(X )_c$. 

Concerning  the Banach-Stone type theorem 4.5.13, in the case that $A$ is unital  there are many proofs of this 
  in the literature which still work in the real case. 
  E.g.\ the method in 8.3.13, whose proof is unchanged in the real case.  Or the unital case follows from 
  the real unital $C^*$-algebra  Banach-Stone type theorem \cite[Theorem 4.4]{RComp} (which generalizes 
  the simple  \cite[Corollary 1.3.10]{BLM}), by passing to the 
  injective or $C^*$-envelope.   We have several more general complex Banach-Stone theorems in e.g.\ 
  \cite{BNp,BNjp}, and it would be interesting to see which of these are true in the real case (see also 
 the paragraph after Proposition 2.10 in \cite{BT}).   
 We remark that Ph.\ D.\ student Dylan Phelps is pursuing some of these directions under our co-direction with Labuschagne.

\section{$C^*$-modules and TRO's} \label{last} 

Real Hilbert $C^*$-modules over a  real $C^*$-algebra are defined almost exactly as in the complex case--
see Definition 2.4 in \cite{HW}.
 Let $A$  be a real $C^*$-algebra and $V$  a real right $C^*$-module over $A$. Then it is known (see e.g.\ \cite{HW}) that  there is an 
$A_c$-valued inner product on $V_c = V + iV$ 
$$\langle v+iw , x+i y \rangle = \langle v , x \rangle + \langle w ,  y \rangle + i(\langle v , y \rangle - \langle w , x \rangle ),$$ 
extending the original $A$-valued inner product on $V$  
such that $V_c$ is a complex $C^*$-module over $A_c$.   This complexification  
is `reasonable' with respect to the canonical norm: $\| v - i w \| = \| v +  i w \|$. 
Similarly $M_n(V)$ is a real right  
$C^*$-module over  $M_n(A)$, and its complexification is reasonable. 
With these matrix norms $V$ is a real operator space, a real subspace of $V_c$ with the  
latter regarded as a real operator space. 
Thus the  
$C^*$-module $V_c$ with its canonical operator space structure
is a (completely) reasonable complexification. 
It follows from Ruan's theorem that the 
this canonical operator space structure  coincides with the unique operator space complexification of $V$.    

If $\{ Y_t \}$ is a family of real right $C^*$-modules over $A$ then the $C^*$-module sum
 $\oplus_t \, Y_t$ is a real right $C^*$-module over $A$ 
as in the complex case and $$(\oplus_t \, Y_t)_c \cong \oplus_t \, (Y_t)_c.$$   Indeed one may define 
$\oplus_t \, Y_t$ to be the closure of the real span of the copies of $Y_t$ in $\oplus_t \, (Y_t)_c.$  Indeed the inherited inner product on this
closed real subspace lies in $A$. 
Viewing the latter as an operator space as in \cite[Section 8.2]{BLM},  it is clear that 
$\oplus_t \, (Y_t)_c$ is a reasonable operator space complexification of $\oplus_t \, Y_t$.   Indeed since $A_c$ is a reasonable complexification of $A$ it is easy to see that 
$$\| (y_t) + i (z_t) \|^2 = \| \sum_t \, \langle y_t + i z_t  , y_t + i z_t \rangle \| = \| \sum_t \, \langle y_t - i z_t  , y_t -i  z_t \rangle \| = \| (y_t) - i (z_t) \|^2$$
for $y_t, z_t \in Y$.   We leave it to the reader to check the matricial case of this computation, that is that $\oplus_t \, (Y_t)_c$ is a completely 
reasonable complexification.   By Ruan's theorem we deduce that $(\oplus_t \, Y_t)_c \cong \oplus_t \, (Y_t)_c.$

If $T : Y \to Z$ is adjointable then it is easy to check that $T_c$ is adjointable with 
$(T_c)^* = (T^*)_c$.   The set $\bB_A(Y)$ of such adjointable $T : Y \to Y$ is therefore 
$*$-isomorphic to a real  $*$-subalgebra of the complex $C^*$-algebra $\Bdb_{A_c}(Y_c)$. 
It is useful to know that

\begin{lemma} \label{isaj} If $Y, Z$ are  real right $C^*$-modules over $A$ and $T : Y \to Z$ is an $A$-module map.  Then 
$T_c : Y_c \to Z_c$  is adjointable if and only if  $T$ is adjointable. \end{lemma}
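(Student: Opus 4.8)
The plan is to prove the nontrivial direction: if $T_c : Y_c \to Z_c$ is adjointable, then $T$ is adjointable (the converse being the routine observation, already noted in the text, that $(T_c)^* = (T^*)_c$ when $T$ is adjointable). So suppose $T_c$ has an adjoint $S \in \bB_{A_c}(Y_c, Z_c)$, meaning $\langle T_c(\xi), \eta \rangle = \langle \xi, S(\eta) \rangle$ for all $\xi \in Y_c$, $\eta \in Z_c$. The obvious strategy is to show that $S$ maps the copy of $Z$ inside $Z_c$ back into the copy of $Y$ inside $Y_c$, and that its restriction $S|_Z$ then serves as an adjoint for $T$.

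First I would bring in the conjugate-linear period $2$ automorphisms $\theta_Y$ on $Y_c$ and $\theta_Z$ on $Z_c$ whose fixed-point sets are $Y$ and $Z$ respectively; these exist because $Y_c, Z_c$ are reasonable $C^*$-module complexifications, and by construction they satisfy $\langle \theta_Y(\xi), \theta_Y(\xi') \rangle = \overline{\langle \xi, \xi' \rangle}$ (conjugation here being the $C^*$-involution-compatible conjugation $\theta_A$ on $A_c$), and similarly for $\theta_Z$. The key computation is that $\theta_Y \circ S \circ \theta_Z$ is also an adjoint of $T_c$: for $\xi \in Y_c$, $\eta \in Z_c$,
$$\langle \xi, (\theta_Y S \theta_Z)(\eta) \rangle = \overline{\langle \theta_Y(\xi), (S\theta_Z)(\eta)\rangle} = \overline{\langle T_c \theta_Y(\xi), \theta_Z(\eta)\rangle} = \overline{\langle \theta_Z T_c(\xi), \theta_Z(\eta)\rangle} = \langle T_c(\xi), \eta \rangle,$$
where I have used that $T_c$ commutes with the conjugations (since $T_c$ is the complexification of the real map $T$, i.e. $T_c\theta_Y = \theta_Z T_c$). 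By uniqueness of adjoints in a $C^*$-module, $\theta_Y S \theta_Z = S$, i.e. $S$ commutes with the conjugations. Hence $S$ carries fixed points of $\theta_Z$ to fixed points of $\theta_Y$, that is $S(Z) \subseteq Y$.

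Then setting $T^* := S|_Z : Z \to Y$, for $y \in Y$ and $z \in Z$ we have $\langle T(y), z \rangle = \langle T_c(y), z \rangle = \langle y, S(z) \rangle = \langle y, T^*(z)\rangle$, and this inner product value lies in $A$ since $y, T^*(z) \in$ the respective real modules; so $T^*$ is a genuine $A$-valued adjoint and $T$ is adjointable. The only point requiring a little care — and the step I expect to be the mild obstacle — is verifying cleanly that the conjugation $\theta_Y$ on the complexified $C^*$-module interacts with the $A_c$-valued inner product via $\theta_A$ in the stated way, i.e. $\langle \theta_Y \xi, \theta_Y \xi'\rangle = \theta_A(\langle \xi, \xi'\rangle)$; this is immediate from the explicit formula for the complexified inner product displayed just before the lemma (writing $\xi = v + iw$, $\xi' = x + iy$ and computing both sides), but it is the identity that makes the whole argument go through, so I would record it explicitly before the main computation.
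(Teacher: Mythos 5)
Your proof is correct, but it reaches the key intermediate claim --- that the adjoint $S=(T_c)^*$ maps the copy of $Z$ back into the copy of $Y$ --- by a different mechanism than the paper. The paper argues directly and more elementarily: for $y\in Y$, $z\in Z$ it notes that $\langle y,(T_c)^*(z)\rangle=\langle Ty,z\rangle$ lies in $A$, writes $(T_c)^*(z)=y_1+iy_2$ with $y_1,y_2\in Y$, observes that the ``$iA$-part'' of $\langle y,y_1+iy_2\rangle$ forces $\langle y,y_2\rangle=0$ for all $y$, and then takes $y=y_2$ to get $\langle y_2,y_2\rangle=0$, hence $y_2=0$. You instead prove the equivariance $S\theta_Z=\theta_Y S$ by checking that $\theta_YS\theta_Z$ is also an adjoint of $T_c$ and invoking uniqueness of adjoints; the containment $S(Z)\subseteq Y$ then falls out because $S$ preserves fixed-point sets. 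Both arguments are sound and of comparable length. The paper's computation is more self-contained (it needs nothing beyond the displayed formula for the complexified inner product), while yours is the standard ``averaging over the conjugation'' pattern that the paper uses elsewhere (e.g.\ in Lemma \ref{Fub}) and isolates the reusable identity $\langle\theta_Y\xi,\theta_Y\xi'\rangle=\theta_A(\langle\xi,\xi'\rangle)$, which you correctly flag as the point needing explicit verification; that identity does follow at once from the displayed inner-product formula, so there is no gap.
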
 \begin{proof}  The one direction is obvious.
If $T_c : Y_c \to Z_c$  is adjointable then
 $$\langle y, (T_c)^*(z) \rangle = \langle T_c y, z \rangle = \langle T y, z \rangle \in A, \qquad y \in Y, z \in Z.$$ 
 Writing $(T_c)^*(z) = y_1 + i y_2$ we easily see by examining the part of the left side of the last equation that is in $iA$ that
 $\langle y,   y_2 \rangle = 0$.  Setting $y = y_2$ gives $\langle y_2 , y_2 \rangle = 0$.  Hence $(T_c)^*(Y) \subset Z$, so that 
 $T$ is adjointable on $Y$. \end{proof}

Also, $\bB_A(Y)$ is closed in $B_B(Y)$ as in the complex case.   So $D = \bB_A(Y)$ is a 
real unital $C^*$-algebra, and we may view it as a  real  $C^*$-subalgebra of $E = \Bdb_{A_c}(Y_c)$.   
 For $y, x,w \in Y$ we have $$(| y \rangle \langle y |) x + i (| y \rangle \langle y |) w
= y \, \langle y , x \rangle + i y \, \langle y , w \rangle .$$
Also within   $\Bdb_{A_c}(Y_c)$ we have 
$| y \rangle \langle y | (x + iw) = y (\langle y , x \rangle + i \langle y , w \rangle)$.
Thus $(| y \rangle \langle y |)_c = | y \rangle \langle y |$ in $\Bdb_{A_c}(Y_c)$ which is a positive operator.
Now $D_c$ may be viewed as a complex $C^*$-subalgebra of $E_c$, and 
$d \in D$ is in $D_+$ iff $d \in (D_c)_+$ iff $d \in (E_c)_+$ iff $d \in E_+$, where the latter are the positive operators in $E$,with $E$ considered as a 
real $C^*$-algebra.   However by \cite[Proposition 5.2.2]{Li} this is equivalent to $d$ being positive in $E$ in the usual sense.
Thus $| y \rangle \langle y | \geq 0$ in $\bB_A(Y)$.   It follows that $Y$ is a real left  $C^*$-module over $\bB_A(Y)$, and over 
$\bK_A(Y)$, where the latter is the closure of the span of the $| y \rangle \langle z |$ for $y, z \in Y$.

Since every real $C^*$-algebra has an increasing contractive approximate identity (c.f.\ the proof of \cite[Proposition 5.2.4]{Li}), 
8.1.3 and 8.1.4(1) in \cite{BLM} hold. 
Since Cohen's factorization theorem works for real Banach modules over real $C^*$-algebras as we said at the start of our discussion of 
Chapter 2 in \cite{BLM},  
8.1.4 (2) holds and the rest of 8.1.4.  Lemma 8.1.5 may be proved by complexification. 
Proposition 8.1.6 holds with the same proof, but the argument after that proof fails. This is the 
assertion that the inner product on a $C^*$-module $Y$ is completely determined by, and may be recovered from, the Banach module structure.
To see this, simply complexify and use the complex case of this remark.

The definitions and arguments in 8.1.7 holds except for the point involving the polarization identity. 
However this point is proved in \cite{HW}: A surjective isometric $A$-module map between 
real right $C^*$-modules is an (adjointable) unitary $A$-module map.  Indeed we have the real version of
8.1.8; and 
8.1.9, 8.1.11, 8.1.14 and 8.1.15 hold with the same  arguments (in some of these using again that Cohen's factorization theorem holds in the real case).  
 We do not have the results in 8.1.12 and 8.1.13 of course.   
 It is easy to see that the real variant of \cite[Proposition 8.1.16 (1)--(3)]{BLM} holds.     Item (4) there fails (take $Y = B$ a real unital $C^*$-algebra
 whose selfadjoint elements do not span $B$).  
The theory of the linking $C^*$-algebra in  8.1.17--8.1.19 is unchanged.   We have:

 \begin{theorem}  \label{csmhom}  For real right $C^*$-modules $Y, Z$ over $A$  we have   
$$\Kdb_{A_c}(Y_c,Z_c) \cong  \Kdb_{A}(Y,Z)_c \, , \; \; \; \; \Bdb_{A_c}(Y_c,Z_c)  \cong \Bdb_{A}(Y,Z)_c  \, , \; \; \; 
 (B_A(Y,Z))_c  \cong B_{A_c}(Y_c,Z_c)$$
completely isometrically.  If $Y = Z$ then the first two of these isomorphisms are $*$-isomorphisms,
and the last is also as algebras.  For the linking algebra, $\cL(Y)_c \cong \cL(Y_c)$ $*$-isomorphically.  \end{theorem}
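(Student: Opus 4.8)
The plan is to prove Theorem \ref{csmhom} by reducing everything to the structural results already assembled: the reasonable complexification of $C^*$-module sums, the identification of adjointables from Lemma \ref{isaj}, the linking $C^*$-algebra, and the uniqueness of the reasonable operator space complexification via Ruan's theorem. First I would set up the linking $C^*$-algebra $\cL(Y)$ for a single module $Y$ over $A$; recall this is the real $C^*$-algebra $\begin{bmatrix} \bK_A(Y) & Y \\ \bar{Y} & A \end{bmatrix}$ (using the appropriate conjugate module in the $2$-$1$ corner), as in 8.1.17 in \cite{BLM}, whose real case is stated just above the theorem to be unchanged. Since $M_n(B)_c \cong M_n(B_c)$ $*$-isomorphically for a real $C^*$-algebra $B$ (stated in the introduction), and since complexification of a real $C^*$-algebra is a $C^*$-algebra complexification, I would first establish $\cL(Y)_c \cong \cL(Y_c)$. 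The point is that $\cL(Y)_c$ is, algebraically, $2\times 2$ matrices with entries $\bK_A(Y)_c$, $Y_c$, $\bar{Y}_c$, $A_c$; one identifies $\bar{Y}_c \cong \overline{Y_c}$ (a routine check, or by the conjugate-module analogue of the discussion around Lemma \ref{lem2}), and then one must identify $\bK_A(Y)_c$ with $\bK_{A_c}(Y_c)$. That last identification is the heart of the matter and I address it next.

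For the identification $\bK_A(Y)_c \cong \bK_{A_c}(Y_c)$, the natural route is: the map $T \mapsto T_c$ sends $|y\rangle\langle z|$ to $|y\rangle\langle z|$ inside $\bB_{A_c}(Y_c)$ (this was essentially computed in the paragraph preceding the theorem, where $(|y\rangle\langle y|)_c = |y\rangle\langle y|$ was verified), hence extends to a $*$-homomorphism $\bK_A(Y)_c \to \bK_{A_c}(Y_c)$ by linearity and density; it is injective since it is the restriction of the injective $*$-homomorphism $\bB_A(Y)_c \to \bB_{A_c}(Y_c)$ coming from $T \mapsto T_c$ on adjointables (every real $*$-algebra has at most one $C^*$-norm, so a faithful $*$-homomorphism is isometric); and it is surjective because $\bK_{A_c}(Y_c)$ is the closed span of $|y_1+iy_2\rangle\langle z_1+iz_2|$, and each such generator expands $\Cdb$-bilinearly into a combination of $|y_j\rangle\langle z_k|$ with coefficients in $\{1,i\}$, landing in the image. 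This gives $\bK_A(Y)_c \cong \bK_{A_c}(Y_c)$ $*$-isomorphically, hence completely isometrically (the matrix norms on a $C^*$-algebra being unique). Feeding this back in gives $\cL(Y)_c \cong \cL(Y_c)$ as $C^*$-algebras.

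Next I would extract the four module-level statements from the linking-algebra statement by reading off corners, exactly as in the complex case. The space $\Bdb_A(Y,Z)$ sits inside $\bB_A(Y\oplus Z)$ as the appropriate corner; since $(Y\oplus Z)_c \cong Y_c \oplus Z_c$ as $C^*$-modules (this is the $C^*$-module sum formula proved just above), and $\bB_A(Y\oplus Z)_c \cong \bB_{A_c}(Y_c\oplus Z_c)$ — which itself follows by the same argument as above once one knows adjointables complexify to adjointables, Lemma \ref{isaj}, together with uniqueness of the $C^*$-norm — the off-diagonal corner gives $\Bdb_A(Y,Z)_c \cong \Bdb_{A_c}(Y_c,Z_c)$ completely isometrically, and the $\bK$-version comes from the corresponding corner of $\bK_A(Y\oplus Z)_c \cong \bK_{A_c}(Y_c\oplus Z_c)$. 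For the bounded (not necessarily adjointable) module maps $B_A(Y,Z)$: here there is no $C^*$-structure to exploit, so instead I would argue via Ruan's uniqueness theorem. One checks that $B_{A_c}(Y_c,Z_c)$ is a \emph{reasonable} operator space complexification of $B_A(Y,Z)$: the real linear complete isometry is $T\mapsto T_c$ (completely isometric because, on $M_n$, $\|T_c\|$ on the $C^*$-module level is computed the same way as $\|T\|$, using that $Y_c, Z_c$ are reasonable complexifications with $\|v-iw\|=\|v+iw\|$), the decomposition $B_{A_c}(Y_c,Z_c) = B_A(Y,Z) + i\,B_A(Y,Z)$ holds because a $\Cdb$-linear bounded $Y_c \to Z_c$ restricts to a bounded $A$-module map $Y\to Z$ and its "real and imaginary parts" are such maps, and the conjugate-linear period $2$ isometry is $S \mapsto \theta_Z \circ S \circ \theta_Y$ with fixed point set exactly (the image of) $B_A(Y,Z)$. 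Then \cite[Theorem 3.2]{RComp} gives $B_A(Y,Z)_c \cong B_{A_c}(Y_c,Z_c)$. The multiplicative and $*$-statements in the $Y=Z$ case are immediate since all the isomorphisms were constructed via $T\mapsto T_c$, which is multiplicative and, on adjointables, $*$-preserving ($(T_c)^* = (T^*)_c$ by Lemma \ref{isaj}'s proof).

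The main obstacle I expect is the surjectivity/well-definedness bookkeeping in identifying $\bB_A(Y)_c$ (as opposed to $\bK_A(Y)_c$) with $\bB_{A_c}(Y_c)$: unlike compacts, a general adjointable operator on $Y_c$ need not be a limit of finite-rank operators, so surjectivity of $\bB_A(Y)_c \to \bB_{A_c}(Y_c)$ must be argued directly — given $\Cdb$-linear adjointable $S:Y_c\to Y_c$, one writes $S = S_1 + iS_2$ with $S_j(y) = $ (real, resp. imaginary) part of $S(y)$ for $y\in Y$, checks $S_j$ maps $Y\to Y$ and is $A$-linear, checks $S_j$ is adjointable on $Y$ using Lemma \ref{isaj} applied to $S$ (or directly, since $S^*$ also splits), and confirms $S = (S_1)_c + i(S_2)_c$. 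This is the place where one genuinely uses that $Y_c = Y+iY$ as a \emph{vector space} and that the inner product and module action restrict correctly; everything else is then formal. I would keep this short in the write-up, perhaps pointing to the analogous argument in \cite{BLM} for the complex-coefficient reduction and to Lemma \ref{isaj} for adjointability.
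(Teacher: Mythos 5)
Your treatment of the $\Kdb$ and $\Bdb$ parts is essentially sound, though it takes a more hands-on route than the paper: for $\Kdb_{A}(Y)_c\cong\Kdb_{A_c}(Y_c)$ the paper argues in one line that $Y_c$ is an equivalence bimodule over both algebras (rather than expanding rank-one generators and checking injectivity/surjectivity by hand), and for $\Bdb$ and the linking algebra it uses $\Bdb_A(Y)\cong M(\Kdb_A(Y))$ and $\cL(Y)\cong \bK_A(Y\oplus A)$ together with Lemma \ref{mulr}, rather than splitting an adjointable operator on $Y_c$ into real and imaginary parts. Your versions work (do note that injectivity of $R+iS\mapsto R_c+iS_c$ into $\Bdb_{A_c}(Y_c)$ is not automatic from faithfulness of $T\mapsto T_c$ on $\Bdb_A(Y)$; one must check, as the paper does, that $R_c=iS_c$ forces $R=S=0$, using $Y\cap iY=(0)$).

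The genuine gap is in the $B_A(Y,Z)$ part. Your application of Ruan's uniqueness theorem requires that $T\mapsto T_c$ be a complete isometry from $B_A(Y,Z)$ into $B_{A_c}(Y_c,Z_c)$, and the parenthetical justification offered ("computed the same way, using $\|v-iw\|=\|v+iw\|$") does not deliver the inequality $\|T_c\|\leq\|T\|$. Reasonableness of $Y_c$ and $Z_c$ only gives the trivial estimates (e.g.\ $\|T_c(y+iz)\|\leq\|Ty\|+\|Tz\|\leq 2\|T\|\,\|y+iz\|$); the sharp bound needs the module structure, and the standard devices (Paschke's inequality $\langle Tx,Tx\rangle\leq\|T\|^2\langle x,x\rangle$ applied to elements of $Y_c$, or passing to $2\times 2$ matrix amplifications) presuppose exactly the contractivity of $T_c$, respectively of $T^{(2)}$, that one is trying to prove. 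Indeed the paper deliberately derives the real version of \cite[Proposition 8.2.2]{BLM} (that bounded module maps satisfy $\|T\|=\|T\|_{\rm cb}=\|T_c\|$) as a \emph{consequence} of this theorem, so it cannot be assumed going in. The paper's fix, which you should adopt, is to bypass the norm computation entirely: $B_A(Y)\cong LM(\Kdb_A(Y))$ completely isometrically (the real case of 8.1 of \cite{BLM}, already checked), and then
$$(B_A(Y))_c\cong LM(\Kdb_A(Y))_c\cong LM(\Kdb_A(Y)_c)\cong LM(\Kdb_{A_c}(Y_c))\cong B_{A_c}(Y_c)$$
by Lemma \ref{mulr} and the already-established $\Kdb$ identification, after first reducing $Y,Z$ to the single module $Y\oplus Z$.
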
 

\begin{proof}   By replacing the spaces by the $C^*$-module sum $Y \oplus Z$ we may assume that $Y = Z$. 
 Then since $Y_c$ is an  equivalence $\Kdb_{A_c}(Y_c)$-$A_c$-bimodule and 
 also an  equivalence  $\Kdb_{A}(Y)_c$-$A_c$-bimodule, it  follows that 
 $\Kdb_{A_c}(Y_c) = \Kdb_{A}(Y)_c$.  The action of $\Kdb_{A}(Y)_c$ on $Y_c$ is as
 $$(R + iS)(x+iy) = Rx -Sy + i(Ry +Sx) = R_c(x+iy)  + i S_c(x+iy) = (R_c + i S_c)(x+iy) .$$   
 If $R_c = i S_c$ then $Rx + iRy = -Sy + i Sx$ for all $x, y \in Y$ and $R = S = 0$.    Thus
 $\Kdb_{A_c}(Y_c)$ may be identified as $C^*$-algebras with $\{ R_c + i S_c \in \bB_{A_c}(Y_c) : R, S \in \Kdb_{A}(Y) \}$.   
 We have 
 $$(B_A(Y))_c \cong (LM(\Kdb_{A}(Y)))_c \cong LM(\Kdb_{A}(Y))_c) \cong LM(\bK_{A_c}(Y_c)) \cong B_{A_c}(Y_c),$$
 and similarly $(\Bdb_A(Y))_c \cong \Bdb_{A_c}(Y_c).$. Finally, 
 $$\cL(Y)_c \cong \bK_A(Y \oplus A)_c \cong \bK_{A_c}((Y \oplus A)_c ) \cong \bK_{A_c}(Y_c \oplus A_c) \cong \cL(Y_c) ,$$ 
 and $\cL(Y)$ is clearly a $*$-subalgebra of $\cL(Y_c)$. 
\end{proof}
  
  Taking $Y = C_n(A)$ and considering the natural $*$-homomorphism $M_n(A) \to \Kdb_A(C_n(A))$ we see that the norm of a matrix 
  in $M_n(A)$ is given by the formulae in \cite[Corollary 8.1.13]{BLM}.  However the equivalence with (ii) in that result is not valid even if $A = \bR$
  and $a$ is selfadjoint and $n = 2$.
   
 Corollary 8.1.20 is true but needs a change at a couple of points of the proof.  One first replaces the appeal to Proposition 8.1.16 (4)
 to an appeal to a fact about the diagonal mentioned after Theorem \ref{cid}.
 This shows that the contractive unital homomorphism $\pi : Z(M(B)) \to CB_B(Y)$, which
 may be viewed as mapping into $LM(\Kdb_B(Y))$, actually maps into
 $\Delta(LM(\Kdb_B(Y)) = M(\Kdb_B(Y)) = \Bdb_B(Y)$.    
 
 One may avoid the  problematic parts of the proof of Corollary 8.1.21 in the real case by  instead proving the 
 result by complexification.   Indeed in that proof $P_c$ will be adjointable on
 $Y_c$ by the complex version of this result, so that $P$ is adjointable on
 $Y$ by Lemma \ref{isaj}. 
 In the proof of  8.1.22 we may apply the argument for contractivity of $R$ to $R_c = \sum_i \, (Q_i)_c$, to see that $R_c$ and hence 
 $R$ is contractive.  The rest of that proof works in the real case. 
  Items  8.1.23--25 are true with the same  proof, and 
 8.1.26 follows by complexification.  The proof of Theorem 8.1.27 (1) uses the polar decomposition of maps on complex $C^*$-modules
 from the source cited there.  Inspecting that source shows that the argument there works in the real case.  
 
 Item 8.2.1 is valid in the real case, and is important (some of this we have stated already).  
 If $T : Y \to Y$ is a contractive $A$-module map then 
by Theorem \ref{csmhom}  we have that $T_c$ is contractive, hence completely contractive by the complex case.
 Thus \cite[Proposition 8.2.2]{BLM} holds.   We also deduce that $\| T \| = \| T_c \| = \| T_c   \|_{\rm cb},$ by a property of the operator space complexification. 
 Also the paragraph after that Proposition, and the items in 8.2.3 and 8.2.4, hold in the real case.  
  Items 8.2.5--8.2.7 are valid in the real case, although one needs to check that  the facts cited in 8.2.5 from other sources 
  hold in the real case.   For this it is useful to note that if $a \in A$ is a  strictly positive element in a real $C^*$-algebra $A$ 
  (so that $a = a^* \geq 0$ and $\varphi(a) > 0$ for all real states $\varphi$ on $A$), then 
  $a$ is  strictly positive  in $A_c$. 
  This is because if $\psi$ is a state on $A_c$ then $\psi(a) \geq 0$ and Re$\psi_{A}$ is a state on $A$ so that $\psi(a) > 0$.   If $a$ is a strictly positive element then $a^{\frac{1}{n}}$ will be a countable cai, as usual. 
  
 Essentially all of the rest of Section 8.2 is true in the real case with the same arguments.     For example, item 8.2.24 is true in the real case, and we note that lattices (1)--(4) there 
  are also in correspondence to the analogous lattices in the complexified spaces.    Similar statements hold for the real variant of 8.2.25.    Some of the real versions of basic facts about  TRO's in \cite[Section 8.3]{BLM} are discussed at the end of the introduction to \cite{BCK}.   The rest of the items in Section 8.3 
  are also valid with the same proofs.    
  
   In Section 8.4 a word needs to be said about  formula (8.17)  since the proof of this there uses 
  the span of Hermitian elements.  However (8.17) may be seen directly: let $C$ be the algebra on the right side of (8.16).  Then 
  $\Delta(C)$ is a $*$-subalgebra of $\Delta(CB_{\cF}(\cT(X)) = \Bdb_{\cF}(\cT(X))$.  Thus $\Delta(C)$ is a $*$-subalgebra of the 
  algebra on the right side of (8.17).   Conversely the latter algebra is clearly contained in $\Delta(C)$. 
  
 Similarly, the proof that (iii) implies (i) in  Theorem 8.4.4 needs to be altered in the real case: if $T$ and $R$ are as in (iii), 
  note that $T_c$ and $R_c$ satisfy the analogous relation for the complexifications.   Thus $T_c \in \cA_{\ell}(X_c) \cong \cA_{\ell}(X)_c$ and hence $T \in \cA_{\ell}(X)$.
  (Alternatively, $T_c$ is left multiplication by an element in $I_{11}(X_c)$, so can be identified with left multiplication by $a + ib$ for $a, b \in \cM_{\ell}(X) \subset I_{11}(X)$.
  Since  $(a^* + i b^*) j_c(X_c) \subset  j_c(X_c)$ we see that $a \in \cA_{\ell}(X)$.)
    Items 8.5.1--8.5.21 are as in the complex case,     although in 8.5.20 the $M$-ideal case seems extremely difficult and only recently proved in a paper referred to in the Acknowledgements below.  In the complete $M$-ideal case the     results follow by 
    working in the complexification (we are not sure if all  $M$-ideals are complete $M$-ideals here, except under extra conditions).   
  
  Reaching  fatigue, we will say nothing about the real case of the remaining results 8.5.22--40,  except to say that most of these items seemed unproblematic
  on our first pass through them, but 
  we have not checked them carefully. 
   This would be a nice project since many of these results are extremely important. 
  Most of these items are not operator space results, 
  but rather are von Neumann algebraic, so one might expect that a few of these results will be in the literature. 
    
  Finally, the real version of most results in the first part of Section 8.6 are essentially already 
   noted by Ruan at the end of his two papers on real operator spaces, although he leaves the details to the reader.
  Indeed  he points out these operator space properties hold on 
  $X$ if and only if they hold on $X_c$, so that the real case of results in 8.6.1--8.6.3 follow from their complex version.
  
\subsection*{Acknowledgements}  Supported by a Simons Foundation Collaboration Grant/Travel Support for Mathematicians.   We thank M. Neal and R. R. Smith for discussions. Much of the present paper concerns real versions of the theory of operator spaces and algebras represented in \cite{BLM} or from around the time of that text.   Recently we have extended some more current aspects of this theory, such as e.g.\ real positivity, and some aspects of complex Jordan operator algebras, to the real setting.
Also, the questions we raised on $M$-ideals in real operator algebras and real TRO's were solved after the acceptance of this paper.   See  two recent papers by D. P. Blecher, M. Neal, A. M. Peralta and S. Su, (e.g.\ ``$M$-ideals, yet again: the case of real JB$^*$-triples'', preprint 2024).

\end{document}